\title[Higgs bundles for the Lorentz group]{Higgs bundles for the Lorentz group}
\date{31 March 2010}
\author[M. Aparicio Arroyo \and O. Garc\'{\i}a-Prada]{Marta Aparicio Arroyo
\and Oscar Garc\'{\i}a-Prada}
\address{Instituto de Ciencias Matem\'aticas CSIC-UAM-UC3M-UCM\\ Serrano 121\\ 28006 Madrid, Spain}
\email{oscar.garcia-prada@uam.es}
\address{Instituto de Ciencias Matem\'aticas CSIC-UAM-UC3M-UCM\\ Serrano 123\\ 28006 Madrid, Spain}
\email{marta.aparicio@mat.csic.es}
\thanks{Partially supported by the Spanish Ministerio de Ciencia e
Innovaci\'on (MICINN) under grant~MTM2007-67623. }
\theoremstyle{plain}
\newtheorem{theorem}{Theorem}[section]
\newtheorem{lemma}[theorem]{Lemma}
\newtheorem{corollary}[theorem]{Corollary}
\newtheorem{proposition}[theorem]{Proposition}
\theoremstyle{definition}
\newtheorem{definition}[theorem]{Definition}
\newtheorem{definition-theorem}[theorem]{Definition-Theorem}
\theoremstyle{remark}
\newtheorem{remark}[theorem]{Remark}
\numberwithin{equation}{section} \setcounter{tocdepth}{1}
\newtheorem*{proposition*}{\textbf{\emph{Proposition}}}
\newtheorem*{teo*}{\textbf{\emph{Theorem}}}
\DeclareMathOperator{\End}{End} 
\DeclareMathOperator{\ad}{ad}\DeclareMathOperator{\Hom}{Hom}
\DeclareMathOperator{\Ad}{Ad}\DeclareMathOperator{\Tr}{Tr}
\DeclareMathOperator{\Aut}{Aut}
\DeclareMathOperator{\GL}{GL}\DeclareMathOperator{\SL}{SL}
\DeclareMathOperator{\Sp}{Sp}\DeclareMathOperator{\SO}{SO}
\DeclareMathOperator{\Spin}{Spin}
\DeclareMathOperator{\U}{U}
\DeclareMathOperator{\rk}{rk}
\DeclareMathOperator{\vol}{vol}
\begin{document}

\begin{abstract}
Using the Morse-theoretic methods introduced by Hitchin, we prove that
the moduli space of $\SO_0(1,n)$-Higgs bundles when $n$ is odd has two
connected components.
\end{abstract}

\maketitle


\section*{Introduction}

Let $G$ be a real semisimple Lie group and let $H\subseteq G$ be a
maximal compact subgroup. Let
$\iota:H^\mathbb{C}\rightarrow\GL(\mathfrak{m}^\mathbb{C})$ be the
complexified isotropy representation defined in terms of the
Cartan decomposition of the Lie algebra of $G$.
Let $X$ be a compact Riemann surface of genus $g\geq 1$.
A \emph{$G$-Higgs
bundle} over $X$ is a pair $(E,\varphi)$
consisting of a principal $H^\mathbb{C}$-bundle $E$ over $X$ and a
holomorphic section $\varphi$ of the bundle associated to $\iota$
twisted by the canonical line bundle of $X$. For these objects there is
a notion of (poly)stability that allows to  construct the  moduli
space of isomorphism classes of polystable $G$-Higgs bundles.
Higgs bundles were introduced by Hitchin in \cite{Hi1,Hi0} when
$G$ is complex and in \cite{Hi2} when $G$ is the split real form
of a complex semisimple Lie group. Other real forms, especially of
Hermitian type have been studied in \cite{BG-PG1,BG-PG3,G-PGM} and other
papers.

In \cite{tesis} a systematic study has been initiated for
$G=\SO_0(p,q)$ --- the connected component of the identity of
$\SO(p,q)$.  In this paper we report on the solution  to the
problem of counting the number of connected components of the
moduli space of polystable $\SO_0(1,n)$-Higgs bundles when $n$
is odd. We prove the following.

\begin{teo*}[see Theorem \ref{SO(1,2m+1)components}]
\emph{The moduli space of $\SO_0(1,n)$-Higgs bundles when  $n>1$
is odd has two  connected components.}
\end{teo*}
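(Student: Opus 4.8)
The plan is to combine the topological classification of $\SO(n,\mathbb{C})$-bundles with Hitchin's Morse-theoretic argument on the moduli space $\mathcal{M}$ of polystable $\SO_0(1,n)$-Higgs bundles. For the lower bound, observe that since $n>1$ is odd we have $\pi_1(\SO(n,\mathbb{C}))=\pi_1(\SO(n))=\mathbb{Z}/2$, so the underlying $\SO(n,\mathbb{C})$-bundle $E$ of an $\SO_0(1,n)$-Higgs bundle $(E,\varphi)$ carries a well-defined class $w_2(E)\in H^2(X;\mathbb{Z}/2)\cong\mathbb{Z}/2$, the obstruction to lifting $E$ to $\Spin(n,\mathbb{C})$. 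This is locally constant on $\mathcal{M}$, and both values occur --- for instance on pairs $(E,0)$ with $E$ a polystable $\SO(n,\mathbb{C})$-bundle of each of the two topological types. Hence $\mathcal{M}$ has at least two connected components, and it remains to show it has at most two.

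For the upper bound I would use the Hitchin function $f\colon\mathcal{M}\to\mathbb{R}_{\geq 0}$, $f([(E,\varphi)])=\|\varphi\|^2_{L^2}$ (the norm taken with respect to the harmonic metric), which is proper and bounded below. Then every connected component of $\mathcal{M}$ meets the subvariety $\mathcal{N}\subset\mathcal{M}$ of local minima of $f$, so the number of components of $\mathcal{M}$ is at most that of $\mathcal{N}$; thus it suffices to show $\mathcal{N}$ has exactly two components. The minima with $\varphi=0$ are precisely the polystable $\SO(n,\mathbb{C})$-bundles, i.e. the moduli space $\mathcal{M}(\SO(n,\mathbb{C}))$, which by Ramanathan's connectedness theorem has exactly two components, one for each value of $w_2$. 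So everything reduces to proving that $f$ has \emph{no} local minimum with $\varphi\neq 0$.

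For this I would use the standard criterion identifying the minima among the critical points. A critical point of $f$ is a fixed point of the $\mathbb{C}^*$-action; for $\SO_0(1,n)$ this is an orthogonal decomposition $V=\bigoplus_j V_j$ of the underlying rank-$n$ bundle into the weight spaces of a cocharacter of $\SO(n,\mathbb{C})$ (the form pairing $V_j$ with $V_{-j}$, so $V_{-j}\cong V_j^*$), together with $\varphi\in H^0(V_1\otimes K)$; here $\mathfrak{h}^\mathbb{C}=\mathfrak{so}(n,\mathbb{C})$, so $E(\mathfrak{h}^\mathbb{C})=\Lambda^2 V$, and $\mathfrak{m}^\mathbb{C}=\mathbb{C}^n$, so $E(\mathfrak{m}^\mathbb{C})=V$. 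Writing the deformation complex in weight components, $C^\bullet=\bigoplus_m C^\bullet_m$ with $C^\bullet_m\colon E(\mathfrak{h}^\mathbb{C}_m)\xrightarrow{\ad\varphi}E(\mathfrak{m}^\mathbb{C}_{m+1})\otimes K$ and $\varphi$ of weight $+1$, such a fixed point is a local minimum if and only if $\mathbb{H}^1(C^\bullet_m)=0$ for all $m<0$. Suppose $\varphi\neq 0$, so $V_1\neq 0$. A computation of $\ad\varphi$ on $E(\mathfrak{h}^\mathbb{C}_{-1})$, where it reduces to contraction of $\varphi$ against the factor $V_{-1}\cong V_1^*$, combined with the bound $\deg V_1\leq 0$ coming from polystability of the isotropic subbundle $V_1$, shows that $\mathbb{H}^1(C^\bullet_{-1})=0$ can hold only when $V_1$ is a line bundle $L$ and $\varphi$ is a nowhere-vanishing section of $L\otimes K$, i.e. $L\cong K^{-1}$ and $\varphi$ a nonzero constant. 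But then $E(\mathfrak{h}^\mathbb{C}_{-2})=\Lambda^2 L^{-1}=0$ while $E(\mathfrak{m}^\mathbb{C}_{-1})\otimes K=L^{-1}\otimes K\cong K^2$, so $C^\bullet_{-2}=[\,0\to K^2\,]$ and $\mathbb{H}^1(C^\bullet_{-2})=H^0(K^2)\neq 0$ --- a contradiction. A parallel analysis disposes of the gradings carrying weights $\geq 2$: there polystability forces $\deg V_j\leq 0$ for $j\geq 1$, making $\deg(V_{-j}\otimes K)$ too large, so $\mathbb{H}^1$ of the complex attached to the lowest weight is again nonzero.

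The main obstacle is exactly this last step: one must carry out the index computation uniformly over all cocharacters of $\SO(n,\mathbb{C})$, controlling the interplay between the minimality vanishings $\mathbb{H}^1(C^\bullet_m)=0$ for $m<0$, the polystability bounds on $\deg\bigoplus_{j\geq k}V_j$, and the Riemann--Roch and Serre-duality estimates for $\deg(V_m\otimes K)$, and thereby excluding every exotic (``Fuchsian-type'' or higher Hodge-type) configuration as a minimum. Heuristically, none survives because $\SO_0(1,n)$ with $n\geq 3$ is neither of Hermitian type nor split --- there is no Toledo invariant and no Hitchin section to manufacture extra minima --- and the isotropy representation $\mathfrak{m}^\mathbb{C}=\mathbb{C}^n$ is simply too small for $\ad\varphi$ to annihilate all of the negative-weight cohomology. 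Once this is established, $\mathcal{N}=\mathcal{M}(\SO(n,\mathbb{C}))$ has two connected components, hence $\mathcal{M}$ has at most two, and together with the lower bound exactly two.
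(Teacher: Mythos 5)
Your overall strategy --- properness of the Hitchin function $f$, the lower bound from $w_2$, and the reduction to showing that $f$ has no local minimum with $\varphi\neq 0$ --- is the same as the paper's, but the key step is carried out on the wrong side of the weight decomposition, and the resulting computation is invalid. With your conventions ($\varphi$ of weight $+1$ and $C^\bullet_m\colon E(\mathfrak{h}^{\mathbb{C}}_m)\to E(\mathfrak{m}^{\mathbb{C}}_{m+1})\otimes K$), the Hessian of $f$ has eigenvalue $-m$ on $\mathbb{H}^1(C^\bullet_m)$, so a smooth fixed point is a local minimum if and only if $\mathbb{H}^1(C^\bullet_m)=0$ for all $m>0$, not $m<0$ (Propositions \ref{fixedpoints} and \ref{localminima}). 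Your analysis of $C^\bullet_{-1}$ and $C^\bullet_{-2}$ therefore imposes no constraint at a minimum. The clearest symptom is that your argument never uses that $n$ is odd: it would equally ``prove'' that every minimum for $\SO_0(1,2)$ has vanishing Higgs field, which is false. Indeed $\SO_0(1,2)$ is of Hermitian type, and on each component of nonzero invariant the minima are Hodge bundles $W=W_{-1}\oplus W_1$ with $W_1=W_{-1}^*$ and $\eta\neq 0$; there $\mathfrak{so}(W)\cong\Lambda^2W$ sits entirely in weight $0$, so every $C^\bullet_m$ with $m>0$ is $0\to 0$ and the point genuinely is a minimum, while your complex $C^\bullet_{-2}=\bigl[\,0\to W_{-1}\otimes K\,\bigr]$ has $\mathbb{H}^1=H^0(W_{-1}\otimes K)\neq 0$. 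The correct computation, on the positive-weight side, is the one in Theorem \ref{SO(1,2m+1)smoothminima}: $C^\bullet_2\colon\Lambda^2W_1\to 0$ forces $\rk W_1=1$, and $C^\bullet_1\colon\Hom(W_{-1},W_0)\to 0$ forces $W_0=0$; the parity of $n$ enters exactly here, because for $n$ odd (indeed for $n>2$ in the stable case) the structure theory guarantees a nonzero weight-zero orthogonal summand $W_0$, which yields the contradiction.

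There are two further gaps. First, the Hessian/weight criterion is valid only at smooth points of $\mathcal{M}$; the paper's Theorems \ref{polystable}, \ref{stablenonsimpleSO(n)} and \ref{stablenonsimple} are needed precisely to decompose stable non-simple and strictly polystable Higgs bundles into sums of smooth $\SO_0(1,n_i)$-, $\SO(n_i)$- and $\U(n_i)$-pieces, after which additivity of $f$ rules out minima with $\eta\neq 0$ at singular points (and it is again the oddness of $n$ that guarantees an orthogonal summand to play against the $W_{-1}\to\mathcal{O}\to W_1$ piece). Your proposal does not address the singular locus at all. Second, you explicitly defer the ``uniform index computation over all cocharacters'' as the main obstacle; that computation is the substance of the theorem, and the heuristic offered in its place (absence of a Toledo invariant or a Hitchin section) is not a proof.
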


An important motivation to study $G$-Higgs bundles comes from
their relation with  re\-pre\-sentations of the fundamental group
of the surface $X$ in $G$. Namely, for a semisimple algebraic Lie
group $G$ we say that a representation of $\pi_1(X)$ in $G$ ---
that is a homomorphism of $\pi_1(X)$ in $G$ --- is reductive if
the Zariski closure of its image is a reductive group. The moduli
space of equivalence classes of reductive representations is an
algebraic variety \cite{G1}. Non-abelian Hodge theory
\cite{BG-PM,C1,D,G-PGM,Hi1,S1,S2} says precisely  that this
variety is homeomorphic to the moduli space of polystable
$G$-Higgs bundles. We thus have the following as a corollary of
our main theorem.

\begin{teo*}
\emph{The moduli space of reductive representations of the
fundamental group of an orientable compact surface in
$\SO_0(1,n)$ when $n>1$ is odd  has two  connected components.}
\end{teo*}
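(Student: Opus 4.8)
The statement is a formal consequence of Theorem \ref{SO(1,2m+1)components} together with the non-abelian Hodge correspondence recalled in the Introduction, so the plan is to check that $\SO_0(1,n)$ meets the hypotheses under which that correspondence holds and then to transport the count of connected components across the resulting homeomorphism. Throughout, write $\mathcal{M}$ for the moduli space of polystable $\SO_0(1,n)$-Higgs bundles over $X$ and $\mathcal{R}$ for the moduli space of reductive representations $\pi_1(X)\to\SO_0(1,n)$.

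First I would record the relevant group theory. The identity component $\SO_0(1,n)$ of $\SO(1,n)$ is a connected real form of the complex semisimple group $\SO(n+1,\mathbb{C})$; for $n>1$ it is semisimple, with maximal compact subgroup $H=\SO(n)$ and Cartan decomposition $\mathfrak{so}(1,n)=\mathfrak{so}(n)\oplus\mathfrak{m}$, $\mathfrak{m}\cong\mathbb{R}^n$, the complexified isotropy representation being the standard representation of $H^{\mathbb{C}}=\SO(n,\mathbb{C})$ on $\mathbb{C}^n$. This is exactly the data used in the Introduction to define $\SO_0(1,n)$-Higgs bundles and their polystability, so $\mathcal{M}$ is the space whose number of connected components Theorem \ref{SO(1,2m+1)components} determines.

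Next I would invoke the correspondence itself, in the form valid for real reductive Lie groups (see the references listed in the Introduction, in particular \cite{G-PGM}): $\mathcal{R}$ is homeomorphic to $\mathcal{M}$. A homeomorphism induces a bijection between the sets of connected components, so $\mathcal{R}$ has exactly as many connected components as $\mathcal{M}$, which by Theorem \ref{SO(1,2m+1)components} is two. This completes the argument.

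The substantive content --- the Morse-theoretic count --- is supplied by the main theorem, so the remainder is essentially bookkeeping; the points to treat with care are the following. One must use the version of non-abelian Hodge theory for real reductive groups (rather than for complex groups or for connected linear algebraic groups), so that reductive representations in $\SO_0(1,n)$, in the sense of the Introduction --- the Zariski closure of the image being reductive, a condition that is meaningful precisely because the hypothesis $n>1$ makes $G$ semisimple --- correspond to polystable $\SO_0(1,n)$-Higgs bundles in the stability sense adopted here; and one should make sure the homeomorphism applies uniformly in the genus, including the boundary case $g=1$ permitted by the standing hypotheses. If more than the bare count were wanted, the two components of $\mathcal{R}$ could be separated intrinsically by a $\mathbb{Z}/2$-valued topological invariant of the associated bundle, matching the invariant that distinguishes the two components on the Higgs side, but this refinement is not required.
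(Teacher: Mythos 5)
Your proposal is correct and follows exactly the route the paper itself takes: the statement is deduced from Theorem \ref{SO(1,2m+1)components} by transporting the count of connected components across the homeomorphism $\mathcal{R}\cong\mathcal{M}$ provided by non-abelian Hodge theory for real reductive groups (the references \cite{BG-PM,C1,D,G-PGM,Hi1,S1,S2} cited in the Introduction). The extra hypothesis-checking you include (semisimplicity of $\SO_0(1,n)$ for $n>1$, applicability of the real-group version of the correspondence) is sound but does not constitute a different argument.
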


The main tool to prove our result is the use of the
Morse-theoretic techniques introduced by Hitchin \cite{Hi1,Hi2}.
These techniques have by now been  used to count the number of
connected components of the moduli space of $G$-Higgs bundles for
several groups (see e.g. \cite{BG-PG1,BG-PG2,Gth2,G-PM,G-PGM,O}).
A main step
is to identify the critical subvarieties of the Hitchin-Morse
function defined by the $L^2$-norm of the Higgs field. This has been carried out in \cite{tesis} in full
generality for $\SO_0(p,q)$. Now, the problem of identifying the
local minima --- which is what allows the counting of connected
components --- in general is far more involved technically  than
for  the other groups studied in the literature. This is however
possible for $\SO_0(1,n)$ when $n$ is odd.  The main technical bulk
of the paper is devoted
to identifying in this case, first the smooth minima in the moduli space,
 and then
the possibly singular points, which  consist of stable but not simple Higgs
bundles and strictly polystable Higgs bundles.
We expect
that our results may be of  interest both in geometry
and physics since  $\SO(1,n)$  is the Lorentz group of special
relativity and its adjoint form is the group of isometries of real
hyperbolic space.

\noindent
{\bf Acknowledgements.} We thank Steven Bradlow, Peter Gothen, Nigel
Hitchin and Ignasi Mundet i Riera for useful discussions.
The first author thanks
the Max Planck Institute for Mathematics in Bonn --- that he was
visiting when this paper
was completed --- for support.

\section{$\SO_0(1,n)$-Higgs bundles}

Let $X$ be a compact Riemann surface. Let $G$ be a real semisimple
Lie group, $H$ be a maximal compact subgroup of $G$ and
$H^\mathbb{C}$ be its complexification. Let
$$\iota:H^\mathbb{C}\rightarrow\GL(\mathfrak{m}^\mathbb{C}),$$ be
the complexified isotropy representation, defined in terms of the
Cartan decomposition $\mathfrak{g}=\mathfrak{h}+\mathfrak{m}$ of
the Lie algebra of $G$ and using the fact that
$[\mathfrak{h},\mathfrak{m}]\subseteq\mathfrak{m}$.

\begin{definition}\label{higgsbundledefinition}
A \textbf{$G$-Higgs bundle} is a pair $(E,\varphi)$ where $E$ is a
principal $H^\mathbb{C}$-bundle over $X$ and $\varphi$ is a
holomorphic section of the vector bundle
$E(\mathfrak{m}^\mathbb{C})\otimes
K=(E\times_\iota\mathfrak{m}^\mathbb{C})\otimes K$, where $K$ is
the canonical line bundle over $X$. The section $\varphi$ is
called the \textbf{Higgs field}.

\end{definition}

When $G$ is a real compact reductive Lie group, the Cartan
decomposition of the Lie algebra is $\mathfrak{g}=\mathfrak{h}$
and then the Higgs field is equal to zero. Hence, a $G$-Higgs
bundle is in fact a principal $G^\mathbb{C}$-bundle.

If $G$ is a complex Lie group, we consider the underlying real Lie
group $G^\mathbb{R}$. In this case, the complexification
$H^\mathbb{C}$ of a maximal compact subgroup is again the Lie
group $G$ and since
$$\mathfrak{g}^\mathbb{R}=\mathfrak{h}+i\mathfrak{h},$$ the isotropy
representation coincides with the adjoint representation of $G$ on
its Lie algebra.

The special orthogonal group $\SO(1,n)$ is the subgroup of
$\SL(n+1,\mathbb{R})$ consisting of all linear transformations of
a $n+1$ dimensional real vector space which leave invariant a
non-degenerate symmetric bilinear form of signature $(1,n)$. Using
the standard non-degenerate symmetric bilinear form of signature
$(1,n)$ on $\mathbb{R}^{n+1}$
$$\epsilon(x,y)=-x_1 y_1+x_{2}y_{2}+\cdots
+x_{n+1} y_{n+1},$$ this means that,
$$\SO(1,n)=\{A\in\SL(n+1,\mathbb{R})\mid
A^tI_{1,n}A=I_{1,n}\},$$ where $I_{1,n}=\left(%
\begin{array}{cc}
  -1 &  \\
   & I_n \\
\end{array}%
\right)$.

The Lie group $\SO(1,n)$ is a non-compact real form of
$\SO(n+1,\mathbb{C})$. It has dimension $n(n+1)/2$, is semisimple
for $n\geq 2$ and has two connected components. Let $\SO_0(1,n)$
be the connected component of the identity.

The Lie algebra of $\SO(1,n)$ and then of its identity component
$\SO_0(1,n)$ is $\mathfrak{so}(1,n)$, which has Cartan
decomposition
$$\mathfrak{so}(1,n)=\mathfrak{h}+\mathfrak{m},$$ where
$\mathfrak{h}=\mathfrak{so}(n)$ is the Lie algebra of the maximal
compact subgroup $\SO(1)\times\SO(n)$ of $\SO_0(1,n)$. If we use
the standard non-degenerate symmetric bilinear form of signature
$(1,n)$, we have that
\begin{align*}
\mathfrak{so}(1,n)& =
\{X\in\mathfrak{sl}(n+1,\mathbb{R})\mid
X^tI_{1,n}+I_{1,n}X=0\}\\
& =
\left\{\left(%
\begin{array}{cc}
  0 & X_2 \\
  X_2^t & X_3 \\
\end{array}%
\right)\mid X_3\text{ real skew-sym. of rank $n$,
$X_2\in\mathbb{R}^{n}$}\right\},
\end{align*}
 and then
$$\mathfrak{h}=\left\{\left(%
\begin{array}{cc}
  0 & 0 \\
  0 & X_3 \\
\end{array}%
\right)\mid X_3\in\mathfrak{so}(n)\right\},$$ and
$$\mathfrak{m}=\left\{\left(%
\begin{array}{cc}
  0 & X_2 \\
  X_2^t & 0 \\
\end{array}%
\right)\mid X_2\in\mathbb{R}^{n}\right\}.$$ The involution of
$\mathfrak{so}(n+1,\mathbb{C})$ that defines $\mathfrak{so}(1,n)$
as a real form is $\sigma(X)=I_{1,n}\bar{X}I_{1,n}$, that is
\begin{align*}
\mathfrak{so}(1,n)&=\{X\in\mathfrak{so}(n+1,\mathbb{C})\mid
I_{1,n}\bar{X}I_{1,n}=X\}\\
&=\{X\in\mathfrak{sl}(n+1,\mathbb{C})\mid
X+X^t=0,I_{1,n}\bar{X}I_{1,n}=X\}\\
&=\left\{\left(%
\begin{array}{cc}
  0 & iX_2 \\
  -iX_2^t & X_3 \\
\end{array}%
\right)\mid X_3\text{ real skew-sym. of rank $n$,
$X_2\in\mathbb{R}^{n}$}\right\}.
\end{align*} Observe that there is an isomorphism $$\left(%
\begin{array}{cc}
  0 & iX_2 \\
  -iX_2^t & X_3 \\
\end{array}%
\right)\rightarrow\left(%
\begin{array}{cc}
  0 & X_2 \\
  X_2^t & X_3 \\
\end{array}%
\right)=\left(%
\begin{array}{cc}
  -i & 0 \\
  0 & I_{n} \\
\end{array}%
\right)\left(%
\begin{array}{cc}
  0 & iX_2 \\
  -iX_2^t & X_3 \\
\end{array}%
\right)\left(%
\begin{array}{cc}
  i & 0 \\
  0 & I_{n} \\
\end{array}%
\right).$$ The Cartan decomposition of the complex Lie algebra is
$$\mathfrak{so}(n+1,\mathbb{C})=\mathfrak{so}(n,\mathbb{C})
\oplus\mathfrak{m}^\mathbb{C},$$ where
$$\mathfrak{m}^\mathbb{C}=\{\left(%
\begin{array}{cc}
  0 & X_2 \\
  -X_2^t & 0 \\
\end{array}%
\right)\mid X_2\in\mathbb{C}^{n}\},$$ and the complexified
isotropy representation is
$$\iota:\{1\}\times\SO(n,\mathbb{C})\rightarrow\GL(\mathfrak{m}^\mathbb{C}),$$
where \begin{eqnarray*}
  \iota\left(%
\begin{array}{cc}
  1 & 0 \\
  0 & b \\
\end{array}%
\right)\left(%
\begin{array}{cc}
  0 & X_2 \\
  -X_2^t & 0 \\
\end{array}%
\right)&=&\left(%
\begin{array}{cc}
  1 & 0 \\
  0 & b \\
\end{array}%
\right)\left(%
\begin{array}{cc}
  0 & X_2 \\
  -X_2^t & 0 \\
\end{array}%
\right)\left(%
\begin{array}{cc}
  1 & 0 \\
  0 & b^{-1} \\
\end{array}%
\right) \\
  &=&\left(%
\begin{array}{cc}
  0 & X_2b^{-1} \\
  -bX_2^t & 0 \\
\end{array}%
\right)\in\mathfrak{m}^\mathbb{C}.
\end{eqnarray*}

From Definition \ref{higgsbundledefinition}, an
\textbf{$\SO_0(1,n)$-Higgs bundle} is a pair $(E,\varphi)$
consisting of a holomorphic principal
$\SO(1,\mathbb{C})\times\SO(n,\mathbb{C})$-bundle $E$ over $X$ and
a holomorphic section $\varphi\in
H^0(E(\mathfrak{m}^\mathbb{C})\otimes K)$.

If $(E,\varphi)$ is an $\SO_0(1,n)$-Higgs bundle, the principal
$\SO(1,\mathbb{C})\times\SO(n,\mathbb{C})$-bundle $E$ is the
fibred product $$E=E_{\SO(1,\mathbb{C})}\times
E_{\SO(n,\mathbb{C})}$$ of two principal bundles with structure
groups $\SO(1,\mathbb{C})$ and $\SO(n,\mathbb{C})$ respectively.
Using the standard representations of $\SO(1,\mathbb{C})$ and
$\SO(n,\mathbb{C})$ in $\mathbb{C}$ and $\mathbb{C}^n$ we can
associate to $E$ a triple $(V,W,Q_W)$ where $V\cong\mathcal{O}$,
$W$ is a holomorphic vector bundle of rank $n$ and trivial
determinant, $Q_W:W\otimes W\rightarrow\mathbb{C}$ is a
non-degenerate symmetric quadratic form, which induces an
isomorphism $\xymatrix{q_W:W\ar[r]^-\sim &W^*}$.

The vector bundle $E(\mathfrak{m}^\mathbb{C})$ can be expressed in
terms of $V\cong\mathcal{O}$ and $W$ as follows:
$$E(\mathfrak{m}^\mathbb{C})=\{(\eta,\nu)\in\Hom(W,\mathcal{O})\oplus\Hom(\mathcal{O},W)\mid
\nu=-\eta^\top\},$$ where $\eta^\top=q_W^{-1}\circ\eta^t$,
$$\xymatrix{\mathcal{O}\ar[r]^{\eta^\top}\ar[dr]_{\eta^t} &W\ar[d]^{q_W}\\
& W^*,}$$ that is,
$E(\mathfrak{m}^\mathbb{C})\cong\Hom(W,\mathcal{O})$. Then, in
terms of vector bundles, the Higgs field is a section $\eta\in
H^0(\Hom(W,\mathcal{O})\otimes K)$, that is
$$\eta:W\rightarrow \mathcal{O}\otimes K,$$ and hence $\SO_0(1,n)$-Higgs
bundles $(E,\varphi)$ are in one-to-one correspondence with tuples
$(\mathcal{O},W,Q_W,\eta)$.


Let $(E,\varphi)$ be an $\SO_0(1,n)$-Higgs bundle. Extending the
structure group of $E$ from
$\SO(1,\mathbb{C})\times\SO(n,\mathbb{C})$ to
$\SO(n+1,\mathbb{C})$, the pair
$(E_{\SO(n+1,\mathbb{C})},\varphi)$, with $$\varphi\in
H^0(E_{\SO(1,\mathbb{C})\times\SO(n,\mathbb{C})}(\mathfrak{m}^\mathbb{C})\otimes
K)\subset
H^0(E_{\SO(n+1,\mathbb{C})}(\mathfrak{so}(n+1,\mathbb{C}))\otimes
K),$$ is an $\SO(n+1,\mathbb{C})$-Higgs bundle.

In terms of vector bundles, if $\textbf{E}$ is the vector bundle
associated to $E_{\SO(n+1,\mathbb{C})}$ via the standard
representation of $\SO(n+1,\mathbb{C})$ in $\mathbb{C}^{n+1}$ and
$(\mathcal{O},W,Q_W,\eta)$ is the tuple corres\-ponding to
$(E,\varphi)$, then $\textbf{E}=\mathcal{O}\oplus W$, and the
$\SO(n+1,\mathbb{C})$-Higgs bundle associated to
$(\mathcal{O},W,Q_W,\eta)$ is the triple
$$(\textbf{E}=\mathcal{O}\oplus
W,Q=\left(%
\begin{array}{cc}
  1 &  \\
   & Q_W \\
\end{array}%
\right),\phi=\left(%
\begin{array}{cc}
   & \eta \\
  -\eta^\top &  \\
\end{array}%
\right)).$$

\section{Stability conditions}

In this section we study the notions of semistability, stability
and polystability for $\SO_0(1,n)$-Higgs bundles, for the
associated $\SO(n+1,\mathbb{C})$-Higgs bundles and the relation
between them. These notions have been studied in \cite{tesis}
applying the general notions given by Bradlow, Garc\'ia-Prada,
Gothen and Mundet i Riera \cite{BG-PM,G-PGM}, that generalize the
results given by Ramanathan \cite{Rt1} for principal bundles.

We will use these notions in term of filtrations. In the case of
$\SO_0(1,n)$-Higgs bundles, since $\SO(1,\mathbb{C})=\{1\}$, they
will only involve conditions on the filtrations of the principal
$\SO(n,\mathbb{C})$-Higgs bundle $(W,Q_W)$.

\begin{definition}\label{stability3}
Let $(\mathcal{O},W,Q_W,\eta)$ be an $\SO_0(1,n)$-Higgs bundle
with $n\neq 2$, then it is \textbf{semistable} if for any
filtration
$$\mathcal{W}=(0\subset W_1\subset\cdots\subset W_s=W),$$ satisfying
$W_j=V_{s-j}^{\bot_{Q_W}}$ and any element
$\mu\in\Lambda(\mathcal{W})$ with
$$\Lambda(\mathcal{W})=\{\mu=(\mu_1,\mu_2,\ldots,\mu_s)\in\mathbb{R}^s\mid\mu_i\leq\mu_{i+1},\mu_{s-i+1}+\mu_i=0
\text{ for any }i\},$$ such that $\eta\in H^0(N\otimes K)$, where
$$N=N(\mathcal{W},\mu)=\sum_{\mu_i\geq 0}\Hom(W_i,\mathcal{O}),$$
we have
$$d(\mathcal{W},\mu)\geq 0.$$ The tuple $(\mathcal{O},W,Q_W,\eta)$ is \textbf{stable} if it is semistable and for any choice of
the filtration $\mathcal{W}$ and non-zero
$\mu\in\Lambda(\mathcal{W})$, such that $\eta\in H^0(N\otimes K)$,
we have
$$d(\mathcal{W},\mu)>0.$$
Finally, the tuple $(\mathcal{O},W,Q_W,\eta)$ is
\textbf{polystable} if it is semistable and for any filtration
$\mathcal{W}$ as above and non-zero $\mu\in\Lambda(\mathcal{W})$
satisfying $\mu_i<\mu_{i+1}$ for each $i$, $\eta\in H^0(N\otimes
K)$ and $d(\mathcal{W},\mu)=0$, there is a splitting
$$W\simeq W_1\oplus W_2/W_1\oplus\cdots\oplus W/W_{s-1}$$
satisfying
$$Q_W(W_i/W_{i-1},W_j/W_{j-1})=0\text{ unless
}i+j=s+1,$$ with respect to which
$$\eta\in
H^0(\bigoplus_{\mu_i=0}\Hom(W_i/W_{i-1},\mathcal{O})\otimes K).$$
\end{definition}

\begin{definition}
The \textbf{moduli space} \textbf{of polystable}
$\SO_0(1,n)$-\textbf{Higgs bundles} is defined as the set of
isomorphisms classes of polystable $\SO_0(1,n)$-Higgs bundles and
is denoted by $\mathcal{M}(\SO_0(1,n))$.
\end{definition}

In the following proposition we prove that the notions of
semistability and stabi\-li\-ty can be simplified.

\begin{proposition}\label{stability4}
Let $(\mathcal{O},W,Q_W,\eta)$ be an $\SO_0(1,n)$-Higgs bundle
with $n\neq 2$. It is \textbf{semistable} if and only if for any
isotropic subbundle $W'\subset W$ such that $\eta(W')=0$ the
inequality $\deg W'\leq 0$ holds. It is \textbf{stable} if and
only if it is semistable and for any non-zero isotropic subbundle
$W'\subset W$ such that $\eta(W')=0$ we have $\deg W'<0$.
\end{proposition}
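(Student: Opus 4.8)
The plan is to unwind Definition \ref{stability3} and match it against the isotropic‑subbundle criterion in both directions. First I would set up the general translation between filtrations of $W$ and elements $\mu\in\Lambda(\mathcal{W})$ on one side, and isotropic subbundles on the other. Given a filtration $\mathcal{W}=(0\subset W_1\subset\cdots\subset W_s=W)$ with $W_j=W_{s-j}^{\bot_{Q_W}}$ and $\mu\in\Lambda(\mathcal{W})$, the standard formula expresses $d(\mathcal{W},\mu)$ as a sum $d(\mathcal{W},\mu)=\sum_{i=1}^{s-1}(\mu_{i+1}-\mu_i)\bigl(-\deg W_i\bigr)$ plus a term coming from the condition $\eta\in H^0(N\otimes K)$; since for $\SO_0(1,n)$ the Higgs field lives in $\Hom(W,\mathcal{O})\otimes K$ and the relevant parabolic degree contribution vanishes whenever the ``$\eta$ preserves $N$'' condition is imposed, this reduces $d(\mathcal{W},\mu)\geq 0$ to a positive combination of inequalities $\deg W_i\leq 0$, ranging over those $W_i$ with $\mu_i<0$ (equivalently, by the symmetry $\mu_{s-i+1}=-\mu_i$, over a family of isotropic subbundles, since $W_i$ is isotropic precisely when $i\le s/2$, i.e. $\mu_i\le 0$).

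For the ``only if'' direction I would argue as follows. Suppose $(\mathcal{O},W,Q_W,\eta)$ is semistable in the sense of Definition \ref{stability3}, and let $W'\subset W$ be an isotropic subbundle with $\eta(W')=0$. Build the two‑step filtration $0\subset W'\subset W'^{\bot_{Q_W}}\subset W$ (collapsing to $0\subset W'\subset W$ if $W'$ is Lagrangian) and take $\mu$ with the single jump placed to make $\mu_1<0$ and $\mu_s>0$, $\mu_i=0$ in between. Isotropy of $W'$ guarantees $W'\subset W'^{\bot}$ so this is a genuine filtration of the required self‑dual type; the hypothesis $\eta(W')=0$ is exactly what makes $\eta\in H^0(N\otimes K)$ since $W'$ carries the negative weight and hence does not appear in $N=\sum_{\mu_i\ge 0}\Hom(W_i,\mathcal{O})$. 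Then $d(\mathcal{W},\mu)\ge 0$ becomes, by the reduction above, precisely $\deg W'\le 0$. The stability statement follows verbatim with strict inequalities, using that a nonzero $W'$ forces $\mu\neq 0$.

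For the converse I would take an arbitrary filtration $\mathcal{W}$ with $W_j=W_{s-j}^{\bot_{Q_W}}$ and nonzero $\mu\in\Lambda(\mathcal{W})$ with $\eta\in H^0(N\otimes K)$, and show $d(\mathcal{W},\mu)\ge 0$ follows from the isotropic‑subbundle hypothesis. Write $d(\mathcal{W},\mu)=\sum_{i:\,\mu_i<0}(\text{positive coefficient})\cdot(-\deg W_i)$ as above; for each such $i$ the subbundle $W_i$ satisfies $i\le s/2$ so $W_i\subseteq W_i^{\bot_{Q_W}}=W_{s-i}$, hence $W_i$ is isotropic, and the condition $\eta\in H^0(N\otimes K)$ together with $\mu_i<0$ (so $\Hom(W_i,\mathcal O)$ is excluded from $N$) forces $\eta(W_i)=0$. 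The hypothesis then gives $\deg W_i\le 0$ for every term, so $d(\mathcal{W},\mu)\ge 0$; again strictness propagates to give stability, noting $\mu\neq 0$ guarantees at least one genuine isotropic piece $W_i\neq 0$ with negative weight.

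The step I expect to be the main obstacle is the bookkeeping in the first paragraph: verifying cleanly that, for $\SO_0(1,n)$‑Higgs bundles specifically (where $\SO(1,\mathbb C)=\{1\}$ and the Higgs field only maps $W\to\mathcal O\otimes K$), the numerical invariant $d(\mathcal{W},\mu)$ really does collapse to a nonnegative combination of the degrees $-\deg W_i$ over the negatively‑weighted steps with \emph{no leftover Higgs‑field contribution}, so that the test reduces exactly to degrees of isotropic subbundles — this uses the precise definition of $d(\mathcal{W},\mu)$ and $N(\mathcal{W},\mu)$ from \cite{tesis}, and the case $n\neq 2$ (semisimplicity) is needed here to rule out degenerate behaviour. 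Once that reduction is in hand, both implications are essentially immediate from the two‑step‑filtration construction and its reverse reading.
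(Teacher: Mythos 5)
Your argument is correct and follows essentially the same route as the paper: both reduce $d(\mathcal{W},\mu)$, via its linearity in $\mu$ and the symmetries $\deg W_{s-i}=\deg W_i$, $\mu_{s-i+1}=-\mu_i$, to a non-negative combination of $-\deg W_i$ over the isotropic steps $W_i$ (those with $i\le s/2$) on which $\eta$ vanishes, and both get the converse from the two-step filtration $0\subset W'\subset W'^{\perp_{Q_W}}\subset W$. The paper merely packages the first reduction differently, as the statement that the admissible cone is positively spanned by the vectors $L_i$ with $d(\mathcal{W},L_i)=-2\deg W_i$, which amounts to the same computation as your explicit formula for $d$.
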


\begin{proof}
Let $(\mathcal{O},W,Q_W,\eta)$ be an $\SO_0(1,n)$-Higgs bundle
and assume that for any isotropic subbundle $W'\subset W$ such
that $\eta(W')=0$, we have $\deg W'\leq 0$ holds. We want to prove
that $(\mathcal{O},W,Q_W,\eta)$ is semistable.

Choose a filtration $\mathcal{W}=(0\subset W_1\subset\ldots\subset
W_s=W$ satisfying $W_j=W_{s-j}^{\bot_{Q_W}}$ for any $j$. We have
to understand the geometry of the convex set
$$\Lambda=
\{\mu\in\Lambda(\mathcal{W})\mid\eta\in N\}\subset\mathbb{R}^s.$$

Let
$$\mathcal{J}=\{i\mid\eta(W_i)=0\}=\{i_1,\dots,i_k\}.$$ One checks easily that if
$\mu\in\Lambda(\mathcal{W})$, then
$$\mu\in\Lambda\Leftrightarrow \mu_a=\mu_b, \text{ for any $i_l\leq a\leq b\leq
i_{l+1}$.}$$  The set of indices $\mathcal{J}$ is symmetric, that
is
$$i\in\mathcal{J}\Leftrightarrow s-i\in\mathcal{J}.$$

Let $\mathcal{J}'=\{i\in\mathcal{J}\mid 2i\leq s\}$ and define for
any $i\in\mathcal{J}'$ the vector
$$L_i=-\sum_{c\leq i}e_c+\sum_{d\geq s-i+1}e_d,$$
where $\{e_1,\ldots,e_s\}$ are the canonical basis of
$\mathbb{R}^s$. The set $\Lambda$ is the positive span of the
vectors $\{L_i\mid i\in\mathcal{J}'\}$ and we have that
$$d(\mathcal{W},\mu)\geq 0\text{ for any $\mu\in\Lambda$}
\Leftrightarrow d(\mathcal{W},L_i)\geq 0\text{ for any $i$.}$$ We
also have that $$d(\mathcal{W},L_i)=-\deg W_{s-i}-\deg W_i.$$
Since $\deg W_{s-i}=\deg W_i$, then $d(\mathcal{W},L_i)=-2\deg
W_i\geq 0$ is equivalent to $\deg W_i\leq 0$, which holds by
assumption. Hence $(\mathcal{O},W,Q_W,\eta)$ is semistable.

Conversely, if $(\mathcal{O},W,Q_W,\eta)$ is semistable, for any
isotropic subbundle $W'\subset W$ such that $\eta(W')=0$ we have
that the condition $\deg W'\leq 0$ is immediately satisfied by
applying the semistability condition of the filtration $0\subset
W'\subset W'^{\perp_{Q_W}}\subset W$.

Finally, the proof of the second statement on stability is very
similar to case of semistability and we then omit it.
\end{proof}

\begin{remark}The case $n=2$ requires special attention. Observe that a
principal $\SO(2,\mathbb{C})$-bundle $(E,Q)$ decomposes as
$E=L\oplus L^{-1}$,
where $L$ is a line bundle and ${\small Q=\left(%
\begin{array}{cc}
   & 1 \\
  1 &  \\
\end{array}%
\right)}$. Then, any principal $\SO(2,\mathbb{C})$-bundle has an
isotropic subbundle with degree greater or equal than zero.
However, $\SO(2,\mathbb{C})\cong\mathbb{C}^*$ has no proper
parabolic subgroups, and the stability condition can not be
simplified in terms of isotropic subbundles.
It seems that this case was overlooked in \cite{Rt1}.
\end{remark}

We now study the relation between the stability of an
$\SO_0(1,n)$-Higgs bundle and the stability of its associated
$\SO(n+1,\mathbb{C})$-Higgs bundle. To do this we introduce the
notions of semistability, stability and polystability for
$\SO(n,\mathbb{C})$-Higgs bundles.

\begin{definition}\label{stability1}
An $\SO(n,\mathbb{C})$-Higgs bundle $(\textbf{E},Q,\phi)$ with
$n\neq 2$ is \textbf{semistable} if for any filtration
$$\mathcal{E}=(0\subset E_1\subset\ldots\subset
E_k=\textbf{E}),$$ $1\leq k\leq n$, satisfying
$E_j=E_{k-j}^{\bot_Q}$, and any element of
$$\Lambda(\mathcal{E})=\{\lambda=(\lambda_1\leq\lambda_2\leq\ldots\leq\lambda_k)\in\mathbb{R}^k\mid
\lambda_{k-i+1}+\lambda_i=0 \text{ for any }i\}$$
such that $\phi\in H^0(N(\mathcal{E},\lambda)\otimes K)$, where
$$N(\mathcal{E},\lambda)=\mathfrak{so}(\textbf{E})\cap\sum_{\lambda_j\leq\lambda_i}\Hom(E_i,E_j),$$
we have
$$d(\mathcal{E},\lambda)=\sum_{j=1}^{k-1}(\lambda_j-\lambda_{j+1})\deg
E_j\geq 0.$$

The triple $(\textbf{E},Q,\phi)$ is \textbf{stable} if it is
semistable and for any choice of the filtration $\mathcal{E}$ and
non-zero $\lambda\in\Lambda(\mathcal{E})$ such that $\phi\in
H^0(N(\mathcal{E},\lambda)\otimes K)$, we have
$$d(\mathcal{E},\lambda)>0.$$

Finally, the triple $(\textbf{E},Q,\phi)$ is \textbf{polystable}
if it is semistable and for any filtration $\mathcal{E}$ as above
and $\lambda\in\Lambda(\mathcal{E})$ satisfying
$\lambda_i<\lambda_{i+1}$ for each $i$, $\phi\in
H^0(N(\mathcal{E},\lambda)\otimes K)$ and
$d(\mathcal{E},\lambda)=0$, there is an isomorphism
$$\textbf{E}\simeq E_1\oplus E_2/E_1\oplus\cdots\oplus
E_k/E_{k-1}$$ satisfying $$Q(E_i/E_{i-1},E_j/E_{j-1})=0\text{
unless }i+j=k+1.$$ Furthermore, via this isomorphism,
$$\phi\in
H^0(\bigoplus_{i}\Hom(E_i/E_{i-1},E_i/E_{i-1})\otimes K).$$
\end{definition}

There is a simplification of the semistability and stability
conditions, which is next described.

\begin{proposition}\label{stability2}
An $\SO(n,\mathbb{C})$-Higgs bundle $(\textbf{E},Q,\phi)$ with
$n\neq 2$ is \textbf{semistable} if and only if for any isotropic
subbundle $E'\subset\textbf{E}$ such that $\phi(E')\subseteq
E'\otimes K$ the inequality $\deg E'\leq 0$ holds, and it is
\textbf{stable} if it is semistable and for any non-zero isotropic
subbundle $E'\subset\textbf{E}$ such that $\phi(E')\subseteq
E'\otimes K$ we have $\deg E'<0$.
\end{proposition}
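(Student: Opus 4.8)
The plan is to mirror, essentially verbatim, the proof of Proposition \ref{stability4}, the only differences being that the condition $\eta(W')=0$ is replaced by $\phi(E')\subseteq E'\otimes K$ and that one must use that $\phi$ is a section of $\mathfrak{so}(\textbf{E})\otimes K$ rather than of a $\Hom$ bundle.

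For the ``if'' direction of semistability, I would fix a filtration $\mathcal{E}=(0\subset E_1\subset\cdots\subset E_k=\textbf{E})$ with $E_j=E_{k-j}^{\bot_Q}$ and analyse the convex set $\Lambda=\{\lambda\in\Lambda(\mathcal{E})\mid\phi\in H^0(N(\mathcal{E},\lambda)\otimes K)\}$. Setting $\mathcal{J}=\{i\mid\phi(E_i)\subseteq E_i\otimes K\}$, one checks that the condition $\phi\in H^0(N(\mathcal{E},\lambda)\otimes K)$ forces $\lambda$ to be constant on each block of consecutive indices between two successive elements of $\mathcal{J}$, and that $\mathcal{J}$ is symmetric, $i\in\mathcal{J}\Leftrightarrow k-i\in\mathcal{J}$, since $\phi\in\mathfrak{so}(\textbf{E})\otimes K$ makes $\phi$-invariance of $E_i$ equivalent to $\phi$-invariance of $E_i^{\bot_Q}=E_{k-i}$. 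Hence $\Lambda$ is the positive span of the vectors $L_i=-\sum_{c\leq i}e_c+\sum_{d\geq k-i+1}e_d$ with $i\in\mathcal{J}'=\{i\in\mathcal{J}\mid 1\leq i,\ 2i\leq k\}$, so $d(\mathcal{E},\lambda)\geq 0$ on $\Lambda$ if and only if $d(\mathcal{E},L_i)\geq 0$ for every such $i$. A short computation gives $d(\mathcal{E},L_i)=-\deg E_i-\deg E_{k-i}$, and since $\deg E_{k-i}=\deg E_i^{\bot_Q}=\deg E_i$ (using $\deg\textbf{E}=0$ and $\textbf{E}/E_i^{\bot_Q}\cong E_i^*$) this equals $-2\deg E_i$; as $E_i$ is isotropic and $\phi$-invariant whenever $2i\leq k$, the hypothesis gives $\deg E_i\leq 0$ and semistability follows. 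Conversely, if $(\textbf{E},Q,\phi)$ is semistable and $E'\subset\textbf{E}$ is isotropic with $\phi(E')\subseteq E'\otimes K$, feeding the filtration $0\subset E'\subset E'^{\bot_Q}\subset\textbf{E}$ (collapsing terms that coincide) with weight $(-1,0,1)$ into the semistability inequality gives $\deg E'\leq 0$, the membership $\phi\in H^0(N\otimes K)$ being automatic because $\phi\in\mathfrak{so}(\textbf{E})\otimes K$ propagates $\phi$-invariance from $E'$ to $E'^{\bot_Q}$.

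The stability statements are obtained the same way. For ``only if'' one plugs a non-zero weight into the stability inequality applied to $0\subset E'\subset E'^{\bot_Q}\subset\textbf{E}$ to get $\deg E'<0$. For ``if'', given a filtration $\mathcal{E}$ and non-zero $\lambda\in\Lambda$, one writes $\lambda=\sum_{i\in\mathcal{J}'}a_iL_i$ with all $a_i\geq 0$ and some $a_{i_0}>0$ (so $E_{i_0}\neq 0$), whence $d(\mathcal{E},\lambda)=\sum_i a_i(-2\deg E_i)\geq a_{i_0}(-2\deg E_{i_0})>0$ by semistability together with the strict inequality on non-zero isotropic $\phi$-invariant subbundles.

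I expect the main work to be combinatorial bookkeeping: pinning down the cone $\Lambda$, showing that its extremal rays are exactly the $L_i$ with $i\in\mathcal{J}'$, checking that the constraint $\phi\in H^0(N(\mathcal{E},\lambda)\otimes K)$ reduces block by block to $\phi$-invariance of the corresponding $E_i$, and treating correctly the ``middle'' of the filtration --- the cases $k$ even versus odd, and the degenerate subcases $E'^{\bot_Q}=E'$ or $E'^{\bot_Q}=\textbf{E}$, which arise only for $n$ even. Once that description is in place the degree computation and both implications are routine, exactly as in Proposition \ref{stability4}.
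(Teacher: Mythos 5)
Your proposal is correct and follows essentially the same route as the paper's own proof: identify the cone $\Lambda(\mathcal{E},\phi)$ as the positive span of the vectors $L_j$ for $j\in\mathcal{J}'$, use the skew-symmetry of $\phi$ with respect to $Q$ to get the symmetry $j\in\mathcal{J}\Leftrightarrow k-j\in\mathcal{J}$, compute $d(\mathcal{E},L_j)=-2\deg E_j$, and for the converse feed the filtration $0\subset E'\subset E'^{\bot_Q}\subset\textbf{E}$ into the semistability inequality. The only difference is that you spell out the stability case (writing $\lambda=\sum a_iL_i$), which the paper omits as ``very similar.''
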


\begin{proof}
This proof is analogous to the proof of Theorem $3.9$ in
\cite{G-PGM}.

Let $(\textbf{E},Q,\phi)$ be an $\SO(n,\mathbb{C})$-Higgs bundle
and assume that for any isotropic subbundle $E'\subset\textbf{E}$
such that $\phi(E')\subseteq E'\otimes K$ one has $\deg E'\leq 0$.
We are going to prove that $(\textbf{E},Q,\phi)$ is semistable.

Choose any filtration $\mathcal{E}=(0\subset
E_1\subset\ldots\subset E_k=\textbf{E})$ satisfying
$E_j=E_{k-j}^{\bot_Q}$ for any $j$ and consider the set
$$\Lambda(\mathcal{E},\phi)=
\{\lambda\in\Lambda(\mathcal{E})\mid\phi\in
N(\mathcal{E},\lambda)\}\subset\mathbb{R}^k.$$ Let
$\mathcal{J}=\{j\mid\phi(E_j)\subseteq E_j\otimes K\}=
\{j_1,\dots,j_r\}$. One checks easily that if
$\lambda=(\lambda_1,\dots,\lambda_k)\in\Lambda(\mathcal{E})$ then
$$\lambda\in\Lambda(\mathcal{E},\phi)\Leftrightarrow
\lambda_a=\lambda_b\text{ for any $j_i\leq a\leq b\leq
j_{i+1}$}.$$ The set of indices $\mathcal{J}$ is symmetric, i.e.,
$$j\in\mathcal{J}\Leftrightarrow k-j\in\mathcal{J}.$$
To check this we have to prove that $\phi(E_j)\subseteq E_j\otimes
K$ implies that $\phi(E_j^{\perp_Q})\subseteq E_j^{\perp_Q}\otimes
K$. Suppose that this is not true, then there is a $j$ with
$\phi(E_j)\subseteq E_j\otimes K$ and there exists some $w\in
E_j^{\perp_Q}$ such that $\phi(w)\notin E_j^{\perp_Q}\otimes K$.
Then there exists $v\in E_j$ such that $Q(v,\phi(w))\neq 0$.
However, since $\phi\in H^0(\mathfrak{so}(\textbf{E})\otimes K)$,
we must have
$$Q(v,\phi(w))=Q(v,-\phi^\top(w))=-Q(\phi(v),w),$$ and the latter vanishes
because by assumption $\phi(v)$ belongs to $E_j$. So we have
reached a contradiction.

Let $\mathcal{J}'=\{j\in\mathcal{J}\mid 2j\leq k\}$ and define for
any $j\in\mathcal{J}'$ the vector
$$L_j=-\sum_{c\leq j}e_c+\sum_{d\geq k-j+1}e_d,$$
where $e_1,\ldots,e_k$ is the canonical basis of $\mathbb{R}^k$.
We know that the set $\Lambda(\mathcal{E},\phi)$ is the positive
span of the vectors $\{L_j\mid j\in\mathcal{J}'\}$. Consequently,
we have
$$d(\mathcal{E},\lambda)\geq 0\text{ for any $\lambda\in\Lambda(\mathcal{E},\phi)$ }
\Leftrightarrow d(\mathcal{E},L_j)\geq 0\text{ for any $j$ }$$ and
$d(\mathcal{E},L_j)=-\deg E_{k-j}-\deg E_j$. Since $\deg
E_{k-j}=\deg E_j$, $d(\mathcal{E},L_j)\geq 0$ is equivalent to
$\deg E_j\leq 0$, which holds by assumption. Hence
$(\textbf{E},Q,\phi)$ is semistable.

Conversely, if $(\textbf{E},Q,\phi)$ is semistable then for any
isotropic subbundle $E'\subset\textbf{E}$ such that
$\phi(E')\subseteq E'\otimes K$ we have $\deg E'\leq 0$ is
immediate by applying the semistability condition of the
filtration $0\subset E'\subset E'^{\perp_Q}\subset\textbf{E}$.

Finally, the proof of the second statement on stability is very
similar to the case of semistability, we thus omit it.
\end{proof}

\begin{proposition}\label{stabilitycorrespondence}
Let $(\mathcal{O},W,Q_W,\eta)$ be an $\SO_0(1,n)$-Higgs bundle and
let $(\textbf{E},Q,\phi)$ be the corresponding
$\SO(n+1,\mathbb{C})$-Higgs bundle. If $(\mathcal{O},W,Q_W,\eta)$
is stable, then $(\textbf{E},Q,\phi)$ is stable as
$\SO(n+1,\mathbb{C})$-Higgs bundle.
\end{proposition}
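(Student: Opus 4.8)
The plan is to use the simplified stability criteria established in Propositions~\ref{stability4} and \ref{stability2}, which reduce both notions of stability to a statement about degrees of isotropic subbundles preserved (in the appropriate sense) by the Higgs field. So I must show: given a stable $\SO_0(1,n)$-Higgs bundle $(\mathcal{O},W,Q_W,\eta)$, any non-zero isotropic subbundle $E'\subseteq\textbf{E}=\mathcal{O}\oplus W$ with $\phi(E')\subseteq E'\otimes K$ satisfies $\deg E'<0$, where $\phi=\left(\begin{smallmatrix} & \eta\\ -\eta^\top & \end{smallmatrix}\right)$. First I would record the two elementary facts: $\textbf{E}$ carries the quadratic form $Q=\left(\begin{smallmatrix}1 & \\ & Q_W\end{smallmatrix}\right)$, so $E'$ isotropic for $Q$ forces $\operatorname{rk} E'\leq n/2 <n+1$, and the $\mathcal{O}$-component of the orthogonal decomposition interacts with $\phi$ only through $\eta$ and $\eta^\top$.

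The key step is to extract from $E'$ an isotropic subbundle $W'\subseteq W$ with $\eta(W')=0$ and $\deg W'\geq\deg E'$, so that stability of $(\mathcal{O},W,Q_W,\eta)$ via Proposition~\ref{stability4} gives $\deg E'\leq\deg W'<0$ (the strict inequality requiring that $W'\neq 0$, which I must check). I would argue as follows. Let $p:\textbf{E}\to\mathcal{O}$ and $\pi:\textbf{E}\to W$ be the projections. Consider $W'':=\pi(E')\cap\ker\eta$, or more carefully the subsheaf of $W$ generated by $\{\pi(e) : e\in E',\ p(e)\in\ker(\text{component})\}$; the saturation of this is a subbundle $W'\subseteq W$. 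Since $E'$ is $Q$-isotropic and the $\mathcal{O}$-summand is non-degenerate, $W'$ is $Q_W$-isotropic (an $e\in E'$ with $p(e)=0$ has $\pi(e)$ isotropic, and pairs of such are $Q_W$-orthogonal because the corresponding elements of $E'$ are $Q$-orthogonal). The $\phi$-invariance $\phi(E')\subseteq E'\otimes K$ translates into $\eta(\pi(e))\in p(E')\otimes K$ and $-\eta^\top(p(e))\in\pi(E')\otimes K$ for $e\in E'$; combining these with the isotropy constraint one deduces $\eta$ kills the relevant part of $\pi(E')$, giving $\eta(W')=0$. The degree comparison $\deg W'\geq\deg E'$ should come from the exact sequence relating $E'$ to its image in $W$ and its intersection with $\mathcal{O}$: since $\mathcal{O}$ has degree $0$ and any sub-line-bundle of $\mathcal{O}$ has degree $\leq 0$, while $E'/(E'\cap\mathcal{O})\hookrightarrow W$ lands in (the saturation) $W'$, we get $\deg E'\leq\deg(E'\cap\mathcal{O})+\deg(\text{image in }W)\leq 0+\deg W'$.

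The main obstacle I anticipate is the bookkeeping when $E'$ meets the $\mathcal{O}$-summand non-trivially, or when $\pi(E')$ is not already contained in $\ker\eta$: in those cases $W'$ is built by a genuine kernel/image construction and one must check it is still isotropic, still $\eta$-annihilated, non-zero, and of degree at least $\deg E'$ simultaneously. The delicate point is that $\eta$-invariance of $E'$ is a two-sided condition (involving both $\eta$ and $\eta^\top$), and one has to use the isotropy of $E'$ to convert "$\eta^\top$ sends $p(E')$ into $\pi(E')\otimes K$'' into the vanishing "$\eta(W')=0$'': concretely, for $e,e'\in E'$, $0=Q(e,\phi(e'))=p(e)\,\eta^\top(p(e')) - Q_W(\pi(e),\eta^\top(p(e')))+\dots$ — pairing these identities forces $\eta$ to vanish on the saturated image. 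Once $W'$ is produced with $W'\neq 0$ (which holds unless $E'$ is a sub-line-bundle of $\mathcal{O}$, impossible since then $\deg E'\leq 0$ but a destabilizing-by-equality subbundle would contradict stability — or one handles this degenerate case directly by noting $\deg E' \le 0$ already and stability applied to $0\subsetneq W'^{\perp}$ is not needed), Proposition~\ref{stability4} finishes the argument. I would then remark that semistability propagates by the same construction, so the stable case is the only one needing the strict-inequality check.
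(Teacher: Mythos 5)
Your proposal follows essentially the same route as the paper: reduce both stability notions to the isotropic--subbundle criteria of Propositions~\ref{stability4} and~\ref{stability2}, and from an isotropic, $\phi$-invariant $E'\subseteq\mathcal{O}\oplus W$ manufacture an isotropic, $\eta$-annihilated subbundle of $W$ whose degree controls $\deg E'$. The paper's execution is simpler than the construction you anticipate needing: it takes $W'=p(E')$ directly and observes that $V'=E'\cap\mathcal{O}$ must vanish (an isotropic subsheaf of $(\mathcal{O},1)$ is zero), so the exact sequence $0\to V'\to E'\to W'\to 0$ gives $\deg E'=\deg W'$ exactly and the $\phi$-invariance then forces $\eta(W')=0$ outright --- no intersection with $\ker\eta$ and no pairing identities $Q(e,\phi(e'))=0$ are required, which also disposes of the degree-comparison worry you flag as the main obstacle.
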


\begin{proof}
Let $(\mathcal{O},W,Q_W,\eta)$ be a semistable $\SO_0(1,n)$-Higgs
bundle and consider the associated $\SO(n+1,\mathbb{C})$-Higgs
bundle $(\textbf{E},Q,\phi)$. We will see that for every isotropic
subbundle $E'\subset\textbf{E}$ such that $\phi(E')\subseteq E'$
we have $\deg E'\leq 0$.

If $E'\subset\textbf{E}$ is an isotropic subbundle, we consider
the projection $p:\textbf{E}\rightarrow W$ and the subbundles
$W'=p(E')$ and $V'=E'\cap\mathcal{O}$. Observe that
$V'=\mathcal{O}$ or $0$. We have the exact sequence
$$0\rightarrow V'\rightarrow E'\rightarrow W'\rightarrow 0$$ and
the equality $$\deg E'=\deg W'.$$ Since $Q=\small{\left(%
\begin{array}{cc}
  1 &  \\
   & Q_W \\
\end{array}%
\right)}$, we have \begin{eqnarray*}
  (E')^{\bot_\textbf{E}} &=& (V'\oplus W')^{\bot_\textbf{E}}=(V')^{\bot_\textbf{E}}\cap(W')^{\bot_\textbf{E}} \\
   &=& [(V')^{\bot_\mathcal{O}}\oplus W]\cap[V\oplus(W')^{\bot_W}]=(V')^{\bot_\mathcal{O}}\oplus(W')^{\bot_W},
\end{eqnarray*} and then, the condition
$E'\subseteq(E')^{\bot_\textbf{E}}$ implies
$V'\subseteq(V')^{\bot_\mathcal{O}}$ and
$W'\subseteq(W')^{\bot_W}$, that is, $V'$ and $W'$ are isotropic
subbundles of $\mathcal{O}$ and $W$ respectively. This implies
that $V'=0$. On the other hand, since $\phi(E')\subseteq E'\otimes
K$ and
$\phi=\small{\left(%
\begin{array}{cc}
   & \eta \\
  -\eta^\top &  \\
\end{array}%
\right)}$, we have that $\eta(W')=0$.

The semistability condition for $(\mathcal{O},W,Q_W,\eta)$ gives
$\deg E'=\deg W'\leq 0$ and then we conclude that the
semistability of an $\SO_0(1,n)$-Higgs bundle implies the
semistability of its associated $\SO(n+1,\mathbb{C})$-Higgs
bundle.

Let now $E'\subset\textbf{E}$ be a non-zero isotropic subbundle
such that $\phi(E')\subseteq E'\otimes K$. Since $E'\neq 0$ and it
is isotropic, $W'=p(E')$ is non-zero. The stability condition for
$(\mathcal{O},W,Q_W,\eta)$ gives $\deg E'=\deg W'<0$ and we
conclude.
\end{proof}

\section{Polystable $\SO_0(1,2m+1)$-Higgs bundles}

The main result in this section is Theorem \ref{polystable} which
gives a full description of polystable $\SO_0(1,n)$-Higgs bundles.

For this, we need to describe some special $\SO_0(1,n)$-Higgs
bundles which arise from $G$-Higgs bundles, for certain real
subgroups $G$ of $\SO_0(1,n)$. Consider $\U(n')\subset\SO_0(1,n)$.
From a $\U(n')$-Higgs bundle, that is, a holomorphic vector bundle
$W'$ of rank $n'$, we can obtain an $\SO(2n')$-Higgs bundle
considering the orthogonal bundle $(W'\oplus
W'^*,\langle\cdot,\cdot\rangle)$ where $\langle\cdot,\cdot\rangle$
denotes the dual pairing. This principal
$\SO(2n',\mathbb{C})$-bundle is a special $\SO_0(1,n)$-Higgs
bundle with $V=0$, $(W,Q)=(W'\oplus
W'^*,\langle\cdot,\cdot\rangle)$, $n=2n'$ and $\eta=0$. Consider
now the inclusion $\SO(n')\subset\SO_0(1,n)$. An $\SO(n')$-Higgs
bundle $(W',Q')$ corresponds to an $\SO_0(1,n)$-Higgs bundle with
$V=0$, $(W,Q)=(W',Q')$, $n=n'$ and $\eta=0$.


\begin{theorem}\label{polystable}
Let $(\mathcal{O},W,Q_W,\eta)$ be a polystable $\SO_0(1,n)$-Higgs
bundle. There is a decomposition, unique up to reordering, of this
Higgs bundle as a sum of stable $G_i$-Higgs bundles, where $G_i$
is one of the following groups: $\SO_0(1,n_i)$, $\SO(n_i)$ or
$\U(n_i)$.
\end{theorem}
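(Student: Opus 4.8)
The plan is to prove existence by induction on $n=\rk W$, peeling off one orthogonal direct summand at a time out of the polystability condition of Definition~\ref{stability3}, and to prove uniqueness by a Krull--Schmidt argument.

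For the existence step, if $(\mathcal{O},W,Q_W,\eta)$ is already stable there is nothing to do. Otherwise it is strictly polystable, so Definition~\ref{stability3} provides a proper filtration $\mathcal{W}=(0\subset W_1\subset\cdots\subset W_s=W)$ with $W_j=W_{s-j}^{\bot_{Q_W}}$, a non-zero $\mu\in\Lambda(\mathcal{W})$ with $\mu_i<\mu_{i+1}$ for all $i$, satisfying $\eta\in H^0(N\otimes K)$ and $d(\mathcal{W},\mu)=0$, together with a splitting $W\simeq\bigoplus_{i=1}^{s}F_i$ where $F_i=W_i/W_{i-1}$, $Q_W(F_i,F_j)=0$ unless $i+j=s+1$, and $\eta\in H^0(\bigoplus_{\mu_i=0}\Hom(F_i,\mathcal{O})\otimes K)$. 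Since $\mu$ is strictly increasing with $\mu_{s+1-i}=-\mu_i$, at most one index has $\mu_i=0$, and when present it is the middle index $i_0=(s+1)/2$; hence $\eta$ factors through the single graded piece $F_{i_0}$ (and $\eta=0$ if no such index exists), $Q_W$ restricts to a non-degenerate symmetric form on $F_{i_0}$ and makes $F_{i_0}$ orthogonal to every $F_j$ with $j\neq i_0$, while for $i\neq i_0$ the pairing identifies $F_{s+1-i}\simeq F_i^{*}$, so that $F_i\oplus F_{s+1-i}$ is the hyperbolic bundle $(F_i\oplus F_i^{*},\langle\cdot,\cdot\rangle)$. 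Assembling these, I obtain an orthogonal direct sum decomposition of $\SO_0(1,n)$-Higgs bundles
$$(\mathcal{O},W,Q_W,\eta)\ \simeq\ (\mathcal{O},F_{i_0},Q_W|_{F_{i_0}},\eta|_{F_{i_0}})\ \oplus\ \bigoplus_{i<i_0}\bigl(0,\,F_i\oplus F_i^{*},\,\langle\cdot,\cdot\rangle,\,0\bigr),$$
in which the first summand is a genuine $\SO_0(1,\rk F_{i_0})$-Higgs bundle carrying the distinguished line bundle $\mathcal{O}$ (and is the trivial $\SO_0(1,0)$-Higgs bundle when the middle index is absent), while each remaining summand is the $\U(\rk F_i)$-Higgs bundle associated with $F_i$ as described just before Theorem~\ref{polystable}. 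Every summand has rank strictly smaller than $n$ because the filtration is proper.

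It remains to feed this into the induction. Each summand is again polystable, being a direct summand of a polystable $\SO_0(1,n)$-Higgs bundle; by the inductive hypothesis the first summand decomposes as required. For a hyperbolic summand $F_i\oplus F_i^{*}$, applying the semistability criterion of Proposition~\ref{stability4} to the isotropic subbundles $F_i$ and $F_i^{*}$ (both annihilated by $\eta$) forces $\deg F_i=0$, so the polystable bundle $F_i$ is a direct sum of stable degree-zero bundles and $F_i\oplus F_i^{*}$ is correspondingly a sum of stable $\U(n_i)$-Higgs bundles. Finally, whenever the induction yields a genuine $\SO_0(1,\cdot)$-summand whose Higgs field vanishes, the classical decomposition of a polystable orthogonal bundle into an orthogonal sum of stable orthogonal bundles and hyperbolic blocks $W'\oplus(W')^{*}$ finishes the job: attaching $\mathcal{O}$ to one stable orthogonal summand makes that one a stable $\SO_0(1,n_i)$-Higgs bundle and leaves the others as stable $\SO(n_i)$- and $\U(n_i)$-Higgs bundles. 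Iterating down the rank produces a decomposition into stable $\SO_0(1,n_i)$-, $\SO(n_i)$- and $\U(n_i)$-Higgs bundles with exactly one factor of type $\SO_0(1,n_i)$.

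For uniqueness, suppose $\bigoplus_a A_a\simeq(\mathcal{O},W,Q_W,\eta)\simeq\bigoplus_b B_b$ are two such decompositions into stable summands of the allowed types. Passing to the associated $\SO(n+1,\mathbb{C})$-Higgs bundles --- stable by Proposition~\ref{stabilitycorrespondence} and its evident analogues for $\SO(n_i)$- and $\U(n_i)$-factors --- one sees that a non-zero morphism between two of these summands is an isomorphism of $G$-Higgs bundles; composing the inclusion $A_a\hookrightarrow\bigoplus_b B_b$ with the projections then matches each $A_a$ with an isomorphic $B_b$, and the usual cancellation upgrades this to a bijection between the two index sets preserving isomorphism classes, which is exactly uniqueness up to reordering. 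The main obstacle, I expect, is not the combinatorics of the splitting but checking that each summand peeled off in the inductive step genuinely inherits polystability (so that the induction is legitimate), together with the bookkeeping of which summand absorbs $\mathcal{O}$ and of the degenerate ranks $\rk F_{i_0}\in\{0,1\}$, where the $\SO_0(1,\cdot)$-block is trivial.
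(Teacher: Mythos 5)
Your proposal is correct and follows essentially the same route as the paper: both extract the polystability splitting of Definition~\ref{stability3}, identify the self-dual middle graded piece (where $\mu_i=0$) as the $\SO_0(1,n_i)$-block carrying $\mathcal{O}$ and $\eta$ (or an $\SO(n_i)$-block when $\eta=0$), pair the remaining graded pieces into hyperbolic $\U(n_i)$-blocks $F_i\oplus F_i^{*}$, and iterate a Jordan--H\"older-type reduction until all summands are stable. Your treatment of the induction, of the degenerate middle index, and of uniqueness via a Krull--Schmidt argument is slightly more explicit than the paper's, but the underlying decomposition and key observations are the same.
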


\begin{proof}
Let $(\mathcal{O},W,Q_W,\eta)$ be a polystable $\SO_0(1,n)$-Higgs
bundle. For $(W,Q_W)$ we fix a filtration $\mathcal{W}=(0\subset
W_1\subset\cdots\subset W_s=W)$ with $W_j=W_{s-j}^{\bot_{Q_W}}$
and a strictly antidominant character $\mu_1<...<\mu_s$ with
$\mu_{s-i+1}+\mu_i=0$, such that $\eta\in
H^0(\displaystyle{\bigoplus_{\mu_i\geq
0}\Hom(W_i,\mathcal{O})\otimes K})$ and $d(\mathcal{W},\mu)=0$.
Since $(\mathcal{O},W,Q_W,\eta)$ is polystable, we have
$$W\simeq W_1\oplus W_2/W_1\oplus\cdots\oplus
W/W_{s-1},$$ with
$$Q_W(W_i/W_{i-1},W_j/W_{j-1})=0\text{ unless }i+j=s+1,$$ and $$\eta\in
H^0(\bigoplus_{\mu_i=0}\Hom(W_i/W_{i-1},\mathcal{O})\otimes K).$$
From the condition $$Q_W(W_i/W_{i-1},W_j/W_{j-1})=0\text{ unless
}i+j=s+1,$$ we have that the bilinear form $Q_W$ gives an
isomorphism $(W_i/W_{i-1})^*\cong W_{s-i+1}/W_{s-i}$. We have the
exact sequence
$$\xymatrix{W_i^\perp\ar[r]& W_{i-1}^\perp\ar[r]^-p &(W_i/W_{i-1})^*}$$ where $p$ is given by
$w\mapsto Q_W(W,\cdot)$, and then $$(W_i/W_{i-1})^*\cong
W_{i-1}^\perp/W_i^\perp\cong W_{s-i+1}/W_{s-i}.$$

Suppose that $n$ is odd and that we have a filtration
$\mathcal{W}=(0\subset W_1\subset...\subset W_s=W)$ where $s$ is
even, then
$W_{\frac{s}{2}}^\bot=W_{s-\frac{s}{2}}=W_{\frac{s}{2}}$. On the
other hand, $\rk(W_{\frac{s}{2}}^\bot)=n-\rk(W_{\frac{s}{2}}),$
that implies $\rk(W_{\frac{s}{2}})=\frac{n}{2}$, which is not a
natural number. Then, if $n$ is odd, all the possible filtrations
$\mathcal{W}=(0\subset W_1\subset...\subset W_s=W)$, have odd
length $s$, and the value $0$ always appears in the middle of the
corresponding strictly antidominant character $\mu_1<...<\mu_s$.
When the rank $n$ is even, we have filtrations for all $1\leq
s\leq n$. When $s$ is odd, we have $\mu_{\frac{s+1}{2}}=0$ and in
the even case, we have
$\cdots\mu_{\frac{s}{2}}<\mu_{\frac{s}{2}+1}\cdots$, with
$\mu_{\frac{s}{2}}=-\mu_{\frac{s}{2}+1}<0$.

The Higgs field can be not equal to zero only when
$\mu_{\frac{s+1}{2}}=0$, that is
$$\eta\in
H^0(\Hom(W_{\frac{s+1}{2}}/W_{\frac{s-1}{2}},\mathcal{O})\otimes
K).$$ Since
$$(W_{\frac{s+1}{2}}/W_{\frac{s-1}{2}})^*\cong
W_{\frac{s+1}{2}}/W_{\frac{s-1}{2}},$$ the tuple
$$(\mathcal{O}\text{, }W_{\frac{s+1}{2}}/W_{\frac{s-1}{2}}\text{, }Q_W\text{, }\eta)$$
is in itself an $\SO_0(1,n_i)$-Higgs bundle, where
$n_i=\rk(W_{\frac{s+1}{2}}/W_{\frac{s-1}{2}})$. Observe that $Q_W$
denotes now the restriction to
$W_{\frac{s+1}{2}}/W_{\frac{s-1}{2}}$.

If $0=\eta\in
H^0(\Hom(W_{\frac{s+1}{2}}/W_{\frac{s-1}{2}},\mathcal{O})\otimes
K),$ $(\mathcal{O},W_{\frac{s+1}{2}}/W_{\frac{s-1}{2}},Q_W,\eta)$
is the sum of the trivial bundle together with an $\SO(n_i)$-Higgs
bundle $(W_{\frac{s+1}{2}}/W_{\frac{s-1}{2}},Q_W)$.

When $\mu_i\neq 0$, we have a pair of $\U(n_i)$-Higgs bundles
$$W_i/W_{i-1}\text{ and }W_{s-i+1}/W_{s-i},$$ dual one to the
other. In this case $n_i=\rk(W_i/W_{i-1})=\rk(W_{s-i+1}/W_{s-i})$.

Each piece in the decomposition is also polystable, and we can
repeat the process and obtain a decomposition where all the pieces
are stable Higgs bundles (using the Jordan-H\"older reduction,
\cite[Sec. $2.10$]{G-PGM}).
\end{proof}

Observe that there can only be one summand with $G_i=\SO_0(1,n_i)$
in the decomposition.

If in the decomposition of a polystable $\SO_0(1,n)$-Higgs bundle
$(\mathcal{O},W,Q_W,\eta)$ there is a summand which is an
$\SO(2)$-Higgs bundle, that is, a principal
$\SO(2,\mathbb{C})$-bundle $E=L\oplus L^{-1}$, the isotropic
subbundles $L$ and $L^{-1}$, which have opposite degrees, do not
violate the stability condition for $E$ (since there are no
parabolic subgroups in $\SO(2,\mathbb{C})$) but they violate the
stability condition for $(\mathcal{O},W,Q_W,\eta)$.

Equivalently, if there is a summand in the decomposition which is
a $\U(n_i)$-Higgs bundles $E$, then $E^*$ is also in the
decomposition of $(\mathcal{O},W,Q_W,\eta)$, and since
$\deg(E)=-\deg(E)$, one or both vector bundles violate the
stability condition for $(\mathcal{O},W,Q_W,\eta)$.

Then we have the following results.

\begin{proposition}\label{stable}
If a polystable $\SO_0(1,n)$-Higgs bundle
$(\mathcal{O},W,Q_W,\eta)$ decomposes as a sum of stable
$G_i$-Higgs bundles where $G_i=\SO_0(1,n_i)$ and $\SO(n_i)$ with
$n_i\neq 2$, then $(\mathcal{O},W,Q_W,\eta)$ is stable.
\end{proposition}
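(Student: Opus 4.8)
The plan is to check stability directly via Proposition~\ref{stability4}. Since $(\mathcal{O},W,Q_W,\eta)$ is polystable it is in particular semistable, so by Proposition~\ref{stability4} it is enough to prove that every non-zero isotropic subbundle $W'\subset W$ with $\eta(W')=0$ has $\deg W'<0$. Semistability already gives $\deg W'\le 0$, so I would argue by contradiction and suppose $\deg W'=0$ for some such $W'$.

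The next step is to feed this $W'$ into the polystability condition of Definition~\ref{stability3}. Take the filtration $\mathcal{W}=(0\subset W'\subset (W')^{\bot_{Q_W}}\subset W)$ --- the last inclusion being an equality precisely when $W'$ is maximal isotropic, in which case one uses instead $0\subset W'\subset W$ --- together with the character $\mu$ that is $0$ on $(W')^{\bot_{Q_W}}/W'$ and takes the values $\mp 1$ on the two outer pieces. This $\mu$ lies in $\Lambda(\mathcal{W})$ and is strictly antidominant. Exactly as in the proof of Proposition~\ref{stability4}, the hypothesis $\eta(W')=0$ guarantees $\eta\in H^0(N(\mathcal{W},\mu)\otimes K)$; and since $W/(W')^{\bot_{Q_W}}\cong (W')^*$ via $Q_W$ and $\deg W=0$ one gets $\deg(W')^{\bot_{Q_W}}=\deg W'$, whence $d(\mathcal{W},\mu)=-2\deg W'=0$. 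Because $\mu$ is strictly antidominant and $(\mathcal{O},W,Q_W,\eta)$ is polystable, Definition~\ref{stability3} now produces a splitting
$$W\cong W'\oplus (W')^{\bot_{Q_W}}/W'\oplus W/(W')^{\bot_{Q_W}},$$
compatible with $Q_W$ in the sense that $Q_W$ pairs $W'$ with $W/(W')^{\bot_{Q_W}}\cong (W')^*$ and restricts non-degenerately to the middle factor, and with $\eta\in H^0(\Hom((W')^{\bot_{Q_W}}/W',\mathcal{O})\otimes K)$, so that $\eta$ vanishes on $W'$.

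From here I would read off a forbidden summand. In the splitting above, $W'$ appears as a direct summand carrying the non-zero weight $\mu_1=-1$ and with vanishing Higgs field, so --- just as the non-zero-weight pieces in the proof of Theorem~\ref{polystable} are the $\U(n_i)$-Higgs bundles --- the vector bundle $W'$ (together with its dual $W/(W')^{\bot_{Q_W}}\cong (W')^*$) is a $\U(\rk W')$-Higgs bundle direct summand of $(\mathcal{O},W,Q_W,\eta)$. Refining $W'$ by a Jordan--H\"older filtration as a holomorphic vector bundle --- it is polystable, since the pieces of the decomposition in Theorem~\ref{polystable} are polystable --- its stable subquotients are stable $\U(n_i)$-Higgs bundles, and by the uniqueness up to reordering in Theorem~\ref{polystable} these $\U$-factors must occur in the given decomposition of $(\mathcal{O},W,Q_W,\eta)$. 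This contradicts the hypothesis that all the $G_i$ are of the form $\SO_0(1,n_i)$ or $\SO(n_i)$. Therefore $\deg W'<0$ for every non-zero isotropic subbundle $W'$ with $\eta(W')=0$, and $(\mathcal{O},W,Q_W,\eta)$ is stable by Proposition~\ref{stability4}.

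I expect the main obstacle to be the second step, namely the correct set-up and application of the polystability condition to the filtration attached to $W'$: one has to check that the character with a single zero entry is strictly antidominant and belongs to $\Lambda(\mathcal{W})$, that the support condition $\eta\in H^0(N(\mathcal{W},\mu)\otimes K)$ is equivalent to $\eta(W')=0$ for this particular filtration, and the degree identity $d(\mathcal{W},\mu)=-2\deg W'$. All three are routine and run parallel to computations already made in the proof of Proposition~\ref{stability4}. A secondary point to be careful about is that the summand $W'$ genuinely forces a $\U$-factor and is not re-absorbed into the $\SO(n_i)$-factors of the decomposition; this is automatic because $W'$ occurs with a non-zero weight, hence as a $\GL$-type rather than an orthogonal-type constituent, and the decomposition of Theorem~\ref{polystable} is unique.
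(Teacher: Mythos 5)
Your first two steps are sound, and they are in fact more detailed than anything in the paper: the paper states this proposition with no proof at all, presenting it as a consequence of the two preceding observations that $\U(n_i)$- and $\SO(2)$-summands produce isotropic subbundles of non-negative degree. Your reduction via Proposition~\ref{stability4}, the choice of the filtration $0\subset W'\subset (W')^{\bot_{Q_W}}\subset W$ with character $(-1,0,1)$, the verification that $\eta\in H^0(N\otimes K)$ amounts to $\eta(W')=0$, and the computation $d(\mathcal{W},\mu)=-2\deg W'=0$ are all correct and run parallel to the proof of Proposition~\ref{stability4}.

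The gap is in the final step, precisely at the point you call ``secondary'' and then dismiss. From the splitting $W\cong W'\oplus (W')^{\bot_{Q_W}}/W'\oplus (W')^*$ you conclude that the given decomposition must contain $\U(n_i)$-factors ``by uniqueness''. But the decomposition of Theorem~\ref{polystable} is not unique in the sense you need: a hyperbolic pair $W'\oplus (W')^*$ can coincide with a sum of orthogonal summands. Concretely, inside $\mathcal{O}\oplus\mathcal{O}$ with the standard diagonal form --- a sum of two stable $\SO(1)$-Higgs bundles --- the line $L'=\{(x,ix)\}\cong\mathcal{O}$ is isotropic of degree $0$, and the resulting splitting exhibits the same rank-two piece as a $\U(1)$-pair $L'\oplus (L')^*$. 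The weight $-1$ carried by $W'$ is attached to a chosen filtration, not to the polystable object, so it cannot prevent $W'\oplus(W')^*$ from being re-absorbed into repeated $\SO(n_i)$-factors; your contradiction therefore does not materialize. Worse, this example shows the proposition as literally stated needs an implicit extra hypothesis: $(\mathcal{O},\mathcal{O}^{\oplus n},I_n,0)$ decomposes as a sum of stable $\SO(1)$-Higgs bundles yet is not stable for $n\geq 2$, because of degree-zero isotropic lines such as $\{(x,ix,0,\dots,0)\}$. To complete an argument along your lines one must assume the stable orthogonal summands are pairwise non-isomorphic and then show that a degree-zero isotropic $W'$ with $\eta(W')=0$ forces either a genuine $\U(n_i)$-factor or a repeated $\SO(n_i)$-factor; that last implication is the real content of the proposition and is missing both from your proof and from the paper.
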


\begin{proposition}\label{strictlypolystable}
If an $\SO_0(1,n)$-Higgs bundle $(\mathcal{O},W,Q_W,\eta)$ is
strictly polystable, then in its decomposition there must be at
least a $G_i$-Higgs bundle with $G_i=\U(n_i)$ or $\SO(2)$.
\end{proposition}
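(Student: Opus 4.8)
The plan is to argue by contraposition: I will show that if the decomposition of $(\mathcal{O},W,Q_W,\eta)$ contains \emph{no} summand of type $\U(n_i)$ or $\SO(2)$, then $(\mathcal{O},W,Q_W,\eta)$ is actually stable, hence not strictly polystable. By Theorem~\ref{polystable} the Higgs bundle decomposes, uniquely up to reordering, as a sum of stable $G_i$-Higgs bundles with $G_i$ among $\SO_0(1,n_i)$, $\SO(n_i)$, $\U(n_i)$; discarding the $\U$ summands and assuming no $\SO(2)$ summand appears leaves a decomposition into stable $\SO_0(1,n_i)$- and $\SO(n_i)$-Higgs bundles with all $n_i\neq 2$. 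By the preceding observation in the excerpt there is at most one $\SO_0(1,n_i)$ summand. Then Proposition~\ref{stable} applies directly and gives that $(\mathcal{O},W,Q_W,\eta)$ is stable.

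First I would recall the definition of strict polystability: a polystable Higgs bundle is \emph{strictly} polystable precisely when it is not stable. So the statement to prove is exactly the contrapositive of ``no $\U$ or $\SO(2)$ summand $\Rightarrow$ stable.'' Second, I would invoke Theorem~\ref{polystable} to fix the decomposition into stable $G_i$-Higgs bundles. Third, under the hypothesis that none of the $G_i$ is $\U(n_i)$ and none is $\SO(2)$, every summand is a stable $\SO_0(1,n_i)$- or $\SO(n_i)$-Higgs bundle with $n_i\neq 2$, which is exactly the input required by Proposition~\ref{stable}; applying it yields stability, contradicting strict polystability. This closes the argument.

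The only genuinely delicate point — and the step I would expect to require the most care — is making sure the hypotheses of Proposition~\ref{stable} are met after discarding summands, in particular that $n_i\neq 2$ for the remaining orthogonal summands. If an $\SO(2)$ summand is present we are already done by hypothesis; if a $\U(n_i)$ summand is present we are also done. So the remaining case is precisely a sum of stable $\SO_0(1,n_i)$- and $\SO(n_i)$-Higgs bundles with all $n_i\neq 2$, and here one must use the remark following Theorem~\ref{polystable} that at most one summand is of type $\SO_0(1,\cdot)$, so that the decomposition is genuinely of the form covered by Proposition~\ref{stable}. Everything else is a direct citation of the earlier results, so no further computation is needed.
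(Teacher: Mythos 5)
Your argument is correct and matches the paper's (implicit) derivation: the paper states Proposition~\ref{strictlypolystable} without a separate proof, presenting it alongside Proposition~\ref{stable} as a consequence of the decomposition in Theorem~\ref{polystable} and the preceding discussion of destabilizing $\U(n_i)$ and $\SO(2)$ summands, and your observation that the statement is precisely the contrapositive of Proposition~\ref{stable} (given that Theorem~\ref{polystable} limits the summand types to $\SO_0(1,n_i)$, $\SO(n_i)$, $\U(n_i)$) is exactly the intended logic. Your care about the $n_i\neq 2$ hypothesis and the uniqueness of the $\SO_0(1,n_i)$ summand is consistent with the paper.
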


Theorem \ref{polystable} gives us a decomposition of a polystable
$\SO_0(1,n)$-Higgs bundles as a sum of stable $G_i$-Higgs bundles,
where $G_i$ is one of the following groups: $\SO_0(1,n_i)$,
$\SO(n_i)$ or $\U(n_i)$. From the following result we have that,
in fact, any polystable $\SO_0(1,n)$-Higgs bundles can be
decomposed as a sum of smooth $G_i$-Higgs bundles.

\begin{proposition}\label{polystablessmooth}
Let $(\mathcal{O},W,Q_W,\eta)$ be a polystable $\SO_0(1,n)$-Higgs
bundle. There is a decomposition, unique up to reordering, of this
Higgs bundle in a sum of smooth $G_i$-Higgs bundles, where
$G_i=\SO_0(1,n_i)$, $\SO(n_i)$ or $\U(n_i)$.
\end{proposition}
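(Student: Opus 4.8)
The plan is to refine the decomposition from Theorem~\ref{polystable} one summand at a time, replacing each \emph{stable} $G_i$-Higgs bundle that happens to be singular (as a point of the relevant moduli space) by a sum of \emph{smooth} $G_i$-Higgs bundles. First I would recall what ``smooth'' means here: a $G_i$-Higgs bundle is a smooth point of $\mathcal{M}(G_i)$ precisely when it is stable \emph{and simple}, i.e.\ its group of automorphisms is as small as possible (the kernel of the isotropy representation, which for the groups in question reduces to $\{\pm 1\}$ or a finite group coming from the center). So the content of the proposition is: a stable $G_i$-Higgs bundle which is not simple can itself be written as a sum of smaller stable \emph{and simple} $G_i'$-Higgs bundles of the same ``type'' (i.e.\ with $G_i'$ again of the form $\SO_0(1,n_i')$, $\SO(n_i')$ or $\U(n_i')$).

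The key steps, in order, are as follows. (1) Treat the three families of summands separately. For a $\U(n_i)$-Higgs bundle the underlying object is just a holomorphic vector bundle $W'$; stability of the original $\SO_0(1,n)$-Higgs bundle forces $\deg W' = 0$ (as observed in the remarks following Theorem~\ref{polystable}, both $W'$ and its dual appear), and a degree-zero semistable bundle is a direct sum of stable degree-zero bundles, each of which is automatically simple; so $\U(n_i)$-summands are dealt with by the usual Jordan--H\"older/Seshadri decomposition of vector bundles, and dualizing respects the pairing. (2) For an $\SO(n_i)$-summand $(W',Q')$ with $\eta=0$, use the fact (cf.\ Proposition~\ref{stability2}) that stability is equivalent to $\deg E' < 0$ for every nonzero isotropic subbundle; if such a bundle is not simple, a nontrivial automorphism produces, via eigenspace decomposition, an orthogonal splitting $W' \simeq W'' \perp (\text{isotropic} \oplus \text{dual})$, giving a smaller $\SO(n'')$-piece together with $\U$-pieces, and one iterates. (3) For the unique $\SO_0(1,n_i)$-summand $(\mathcal{O},W',Q_W,\eta)$, do the same analysis but now tracking the Higgs field $\eta\colon W'\to\mathcal{O}\otimes K$: a nontrivial automorphism must commute with $\eta$ and preserve $Q_W$, and its invariant decomposition of $W'$ splits off $\SO_0(1,\cdot)$-, $\SO(\cdot)$- and $\U(\cdot)$-pieces compatibly with $\eta$ and $Q_W$; iterate until every piece is simple. (4) Since ranks strictly drop at each step, the process terminates, and uniqueness up to reordering follows from uniqueness of the Jordan--H\"older decomposition together with the uniqueness already established in Theorem~\ref{polystable}.

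The main obstacle I expect is step~(3): controlling the interaction between the automorphism group, the quadratic form $Q_W$, and the Higgs field $\eta$ simultaneously. One must check that the eigenbundle (or, over $\mathbb{C}$, the generalized-eigenspace) decomposition induced by an automorphism of $(\mathcal{O},W',Q_W,\eta)$ is genuinely a \emph{holomorphic, orthogonal, $\eta$-compatible} direct sum — in particular that pieces pair up correctly under $Q_W$ (eigenvalue $\lambda$ with eigenvalue $\lambda^{-1}$) and that $\eta$ lands in the right block — and that each resulting piece is again stable \emph{in the sense appropriate to its smaller group}, which requires revisiting Propositions~\ref{stability4} and~\ref{stability2} restricted to subbundles. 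A secondary subtlety is purely group-theoretic: identifying exactly when a stable $G_i$-Higgs bundle is simple (the center of $\SO(n,\mathbb{C})$ differs for $n$ even and odd, and $\U(1)$-factors are never simple), so that the bookkeeping of ``smooth = stable and simple'' is correct for each $G_i$. Once these local statements are in place, the induction on rank and the assembly into a global decomposition are routine.
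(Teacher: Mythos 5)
Your proposal is correct and follows essentially the same route as the paper: the paper's proof simply assembles Theorem~\ref{polystable} with Corollary~\ref{smooth}, Theorem~\ref{stablenonsimpleSO(n)} and Theorem~\ref{stablenonsimple}, and your inline eigenspace analysis of a non-central automorphism compatible with $Q_W$ and $\eta$ is exactly how those last two theorems are proved. The one point to tighten is in your steps (2)--(3): stability forces every eigenvalue of such an automorphism to be $\pm 1$ (an eigenvalue $\lambda\neq\pm 1$ would produce an isotropic eigenbundle $E_i$ paired with $E_i^*$, so that $\deg E_i\geq 0$ or $\deg E_i^*\geq 0$), hence a stable non-simple $\SO(n_i)$- or $\SO_0(1,n_i)$-summand splits only into orthogonal and $\SO_0(1,\cdot)$ pieces, and the $\U(n_i)$-pieces arise solely from the strictly polystable decomposition of Theorem~\ref{polystable}, where they are already stable, hence simple and smooth.
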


\begin{proof}
The starting point is Theorem \ref{polystable}.

A stable $\U(n)$-Higgs bundle represents a smooth point in the
moduli space of $\U(n)$-Higgs bundles.

A stable $\SO(n)$-Higgs bundle is smooth if and only if it is
stable and simple. On the other hand, any stable $\SO(n)$-Higgs
bundle which is not simple can be expressed, using Theorem
\ref{stablenonsimpleSO(n)}, as a direct sum of smooth
$\SO(n_i)$-Higgs bundles.

Finally, as we know from Corollary \ref{smooth}, a stable
$\SO_0(1,n)$-Higgs bundle represents a smooth point of the moduli
space if and only if it is simple, but if a stable
$\SO_0(1,n)$-Higgs bundle is non-simple, from Theorem
\ref{stablenonsimple} we have that it decomposes as a sum of
smooth $\SO_0(1,n_i)$ and $\SO(n_i)$-Higgs bundles.
\end{proof}

\section{Smoothness and deformation
theory}\label{deformationsection}

It is known that a stable vector bundle is simple and that it is a
smooth point of the moduli space of polystable vector bundles. On
the other hand, a stable principal $\SO(n,\mathbb{C})$-bundle with
$n\neq 2$ represents a smooth point of the moduli space
$\mathcal{M}(\SO(n))$ if and only if it is simple (see \cite{Ra}).
Observe that, for $n=2$, we have
$\SO(2,\mathbb{C})\cong\mathbb{C}^*$ and then any $\SO(2)$-Higgs
bundle is stable, simple and smooth. Thus, except in the case
$n=2$, the stability of a special orthogonal bundle does not imply
simplicity. In this section we study the smoothness conditions in
the moduli space $\mathcal{M}(\SO_0(1,n))$ adapting the results in
\cite[Sec. $4.2$]{G-PGM} to our case.

\begin{definition}
A $G$-Higgs bundle $(E,\varphi)$ is said to be \textbf{simple} if
$\Aut(E,\varphi)=\ker\iota\cap Z(H^\mathbb{C})$, where $H\subset
G$ is a maximal compact subgroup, $Z(H^\mathbb{C})$ denotes the
centre of its complexification and
$\iota:H^\mathbb{C}\rightarrow\GL(\mathfrak{m}^\mathbb{C})$ is the
complexified isotropy representation corresponding to the Cartan
decomposition $\mathfrak{g}=\mathfrak{h}+\mathfrak{m}$ of the Lie
algebra of $G$.
\end{definition}

A $G$-Higgs bundle is then simple if the group of automorphisms is
as small as possible. To be in $\ker\iota$ means to be compatible
with the Higgs field.

If $(E,Q)$ is an $\SO(n)$-Higgs bundle with $n>2$, that is, a
principal $\SO(n,\mathbb{C})$-bundle, it has Higgs field equal to
zero and then it is simple if and only if
$$\Aut(E,Q)=Z(\SO(n,\mathbb{C}))=\left\{%
\begin{array}{ll}
    I_n, & \hbox{$n$ odd,} \\
    \pm I_n, & \hbox{$n$ even.} \\
\end{array}%
\right.$$ The group of automorphisms of an $\SO_0(1,n)$-Higgs
bundle is
$$\Aut(\mathcal{O},W,Q_W,\eta)=\{(1,g)\in\Aut(\mathcal{O})\times\Aut(W,Q_W)\mid\eta\circ
g=\eta\},$$ and hence $(\mathcal{O},W,Q_W,\eta)$ is simple if and
only if
$$\Aut(\mathcal{O},W,Q_W,\eta)=\ker\iota\cap
\{1\}\times Z(\SO(n,\mathbb{C}))=\{I_{n+1}\}.$$

Let us consider the \textbf{deformation complex} of an
$\SO_0(1,n)$-Higgs bundle
$$\begin{array}{rcl}
C^\bullet(\mathcal{O},W,Q_W,\eta):
\mathfrak{so}(\mathcal{O})\oplus\mathfrak{so}(W)&\rightarrow&
\Hom(W,\mathcal{O})\otimes
K,\\
 (0,g)\text{ }&\mapsto&\eta g.
\end{array}$$ (see \cite[Definition $4.3$]{G-PGM}).
We have the following result.

\begin{proposition}\label{hypercohomology}
If $(\mathcal{O},W,Q_W,\eta)$ is an $\SO_0(1,n)$-Higgs bundle, we
have the following:
\begin{enumerate}
    \item The space of endomorphisms of $(\mathcal{O},W,Q_W,\eta)$ is
    isomorphic to the hypercohomology group
    $\mathbb{H}^0(C^\bullet(\mathcal{O},W,Q_W,\eta))$.
    \item The space of infinitesimal deformations of $(\mathcal{O},W,Q_W,\eta)$
    is isomorphic to the first hypercohomology group
    $\mathbb{H}^1(C^\bullet(\mathcal{O},W,Q_W,\eta))$.
\end{enumerate}
\end{proposition}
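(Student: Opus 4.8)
Proposition \ref{hypercohomology} asserts that the endomorphism space and the infinitesimal deformation space of an $\SO_0(1,n)$-Higgs bundle are computed, respectively, by the $0$th and $1$st hypercohomology of the deformation complex $C^\bullet(\mathcal{O},W,Q_W,\eta)$.

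The plan is to follow the strategy of \cite[Sec.~4.1]{G-PGM}, adapting it to the concrete shape of the complex at hand. First I would set up the deformation problem in Dolbeault terms: a $\overline{\partial}$-operator on the principal $\SO(n,\mathbb{C})$-bundle together with the Higgs field $\eta$ gives a point whose infinitesimal deformations are governed by the total complex obtained from the Dolbeault resolutions of $\mathfrak{so}(\mathcal{O})\oplus\mathfrak{so}(W)$ and of $\Hom(W,\mathcal{O})\otimes K$, with the differential combining $\overline{\partial}$ with the bracket-type map $g\mapsto \eta g$ appearing in $C^\bullet$. Here one uses that for $\SO_0(1,n)$ the relevant Lie-algebra bundles are $E(\mathfrak{h}^{\mathbb{C}})=\mathfrak{so}(\mathcal{O})\oplus\mathfrak{so}(W)=\mathfrak{so}(W)$ and $E(\mathfrak{m}^{\mathbb{C}})=\Hom(W,\mathcal{O})$, and that the isotropy action of $g\in\mathfrak{so}(W)$ on $\eta\in\Hom(W,\mathcal{O})\otimes K$ is precisely $\eta\mapsto -\eta g$ (up to sign, from the computation of $\iota$ in Section~1). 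This identifies $C^\bullet$ as the sheaf complex whose hypercohomology we must interpret.

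Next, for part (1): an endomorphism of $(\mathcal{O},W,Q_W,\eta)$ is a holomorphic section $g$ of $\mathfrak{so}(W)$ (the $\mathfrak{so}(\mathcal{O})$ part being zero) satisfying $\eta g=0$, i.e. an element of the kernel of $H^0(\mathfrak{so}(W))\to H^0(\Hom(W,\mathcal{O})\otimes K)$. By the standard hypercohomology spectral sequence (or directly from the long exact sequence associated to $C^\bullet$ viewed as a two-term complex in degrees $0$ and $1$), $\mathbb{H}^0(C^\bullet)$ is exactly this kernel, which gives the claimed isomorphism. For part (2), I would argue that the infinitesimal deformations of the Higgs bundle — deformations of the holomorphic structure on $E_{\SO(n,\mathbb{C})}$ compatible with $Q_W$, together with deformations of $\eta$, modulo the gauge action — form the degree-one cohomology of the associated Dolbeault total complex, and that this total complex is a fine resolution of $C^\bullet$, so its cohomology is $\mathbb{H}^\bullet(C^\bullet)$. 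Concretely: a cocycle is a pair $(\dot{a},\dot{\eta})\in \Omega^{0,1}(\mathfrak{so}(W))\oplus\Omega^{1,0}(\Hom(W,\mathcal{O})\otimes K)$ with $\overline{\partial}\dot{\eta}=\eta\dot a$ (up to sign), and coboundaries come from $\Omega^0(\mathfrak{so}(W))$ acting by $(\overline{\partial}g,\ \eta g)$; this is manifestly $\mathbb{H}^1$ of $C^\bullet$. One should also check the compatibility with the orthogonal structure is automatic because $\mathfrak{so}(W)$ already encodes the condition preserving $Q_W$, and that the $\mathcal{O}$-factor contributes nothing since $\mathfrak{so}(\mathcal{O})=0$.

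The main obstacle I anticipate is not the formal homological algebra but the careful bookkeeping of signs and of the identification of the isotropy action with the differential of $C^\bullet$ — in particular verifying that the map $(0,g)\mapsto \eta g$ in the stated complex is genuinely the linearization at $\eta$ of the $H^{\mathbb{C}}$-gauge action on $\mathfrak{m}^{\mathbb{C}}\otimes K$, so that the gauge-theoretic deformation complex and the sheaf complex $C^\bullet$ really do agree. Once that dictionary is in place, both statements follow from the general machinery in \cite[Sec.~4]{G-PGM} applied verbatim, since the only input specific to $\SO_0(1,n)$ is the explicit description of $E(\mathfrak{h}^{\mathbb{C}})$, $E(\mathfrak{m}^{\mathbb{C}})$ and $\iota$ recorded in Section~1. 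I would therefore present the proof as: (i) recall the general result that endomorphisms and infinitesimal deformations of a $G$-Higgs bundle are $\mathbb{H}^0$ and $\mathbb{H}^1$ of the deformation complex $E(\mathfrak{h}^{\mathbb{C}})\to E(\mathfrak{m}^{\mathbb{C}})\otimes K$; (ii) specialize the bundles and the map to the $\SO_0(1,n)$ case using Section~1; (iii) observe the resulting complex is exactly $C^\bullet(\mathcal{O},W,Q_W,\eta)$, and read off (1) and (2).
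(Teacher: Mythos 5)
Your proposal is correct and follows the same route the paper takes: the paper gives no explicit proof, but states the proposition immediately after introducing the deformation complex with a reference to \cite[Sec.~4.2]{G-PGM}, i.e.\ it obtains both statements by specializing the general deformation theory of $G$-Higgs bundles (endomorphisms $\cong\mathbb{H}^0$, infinitesimal deformations $\cong\mathbb{H}^1$ of $E(\mathfrak{h}^{\mathbb{C}})\to E(\mathfrak{m}^{\mathbb{C}})\otimes K$) using the identifications $E(\mathfrak{h}^{\mathbb{C}})\cong\mathfrak{so}(\mathcal{O})\oplus\mathfrak{so}(W)$ and $E(\mathfrak{m}^{\mathbb{C}})\cong\Hom(W,\mathcal{O})$ from Section~1. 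Your Dolbeault/\v{C}ech bookkeeping simply makes explicit what the paper leaves to the cited reference.
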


It follows from Proposition \ref{hypercohomology} that, for every
$\SO_0(1,n)$-Higgs bundle $(\mathcal{O},W,Q_W,\eta)$
re\-pre\-senting a smooth point of the moduli space, the tangent
space at this point is canonically isomorphic to
$\mathbb{H}^1(C^\bullet(\mathcal{O},W,Q_W,\eta))$.

\begin{proposition}\label{smoothness}
If an $\SO_0(1,n)$-Higgs bundle $(\mathcal{O},W,Q_W,\eta)$ is
stable, simple and satisfies
$$\mathbb{H}^2(C^{\bullet}(\mathcal{O},W,Q_W,\eta))=0,$$ then it is a smooth
point of the moduli space.
\end{proposition}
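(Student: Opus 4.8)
The plan is to follow the standard deformation-theoretic argument for moduli of Higgs bundles, as in \cite[Sec. 4.2]{G-PGM}, adapting it to the structure group $\SO(1,\mathbb{C})\times\SO(n,\mathbb{C})$. The key point is that, under the three hypotheses, the local model for the moduli space near $(\mathcal{O},W,Q_W,\eta)$ is cut out by a single obstruction map from $\mathbb{H}^1(C^\bullet)$ to $\mathbb{H}^2(C^\bullet)$, and vanishing of $\mathbb{H}^2$ forces this model to be smooth of the expected dimension.

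First I would recall that for any $G$-Higgs bundle the formal deformation space is governed by the hypercohomology of the deformation complex $C^\bullet(\mathcal{O},W,Q_W,\eta)$: by Proposition \ref{hypercohomology}, $\mathbb{H}^0$ is the space of endomorphisms and $\mathbb{H}^1$ the space of infinitesimal deformations, and standard Kuranishi theory (carried out for $G$-Higgs bundles in \cite{G-PGM}) produces a versal deformation given by the zero locus of a holomorphic map $\Phi:\mathbb{H}^1(C^\bullet)\to\mathbb{H}^2(C^\bullet)$ with $\Phi(0)=0$ and $d\Phi_0=0$; the germ of the moduli space at $(\mathcal{O},W,Q_W,\eta)$ is then the quotient of $\Phi^{-1}(0)$ by the group $\Aut(\mathcal{O},W,Q_W,\eta)$. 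If $\mathbb{H}^2(C^\bullet)=0$, the target of $\Phi$ is zero, so $\Phi^{-1}(0)=\mathbb{H}^1(C^\bullet)$ is smooth and the local model is simply $\mathbb{H}^1(C^\bullet)$ modulo the automorphism group acting linearly through its infinitesimal action.

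Next I would use the two remaining hypotheses to control this quotient. Stability guarantees that $(\mathcal{O},W,Q_W,\eta)$ is a point of the moduli space in the usual (GIT / gauge-theoretic) sense and that nearby polystable objects are classified by the local model above. Simplicity says $\Aut(\mathcal{O},W,Q_W,\eta)=\ker\iota\cap(\{1\}\times Z(\SO(n,\mathbb{C})))=\{I_{n+1}\}$, which is a finite group acting trivially at the level of Lie algebras; more precisely $\mathbb{H}^0(C^\bullet)=\operatorname{Lie}\Aut(\mathcal{O},W,Q_W,\eta)=0$, so the automorphism group is discrete and acts on the smooth chart $\mathbb{H}^1(C^\bullet)$. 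Combining simplicity with the fact that the only automorphism is the identity, the action is in fact trivial, so the local model is just the vector space $\mathbb{H}^1(C^\bullet)$, and $(\mathcal{O},W,Q_W,\eta)$ is a smooth point whose tangent space is $\mathbb{H}^1(C^\bullet(\mathcal{O},W,Q_W,\eta))$, as already observed after Proposition \ref{hypercohomology}.

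The main obstacle is not any single computation but ensuring that the abstract Kuranishi / slice construction for $G$-Higgs bundles applies verbatim here: one must check that the deformation complex written above is the correct one for the group $\SO(1,\mathbb{C})\times\SO(n,\mathbb{C})$ (the $\mathfrak{so}(\mathcal{O})$ summand being zero-dimensional, this reduces to the $\SO(n,\mathbb{C})$ theory twisted by the isotropy action on $\mathfrak{m}^\mathbb{C}\cong\Hom(W,\mathcal{O})$), and that stability plus simplicity indeed place us in the good locus where the moduli space is locally a quotient of $\Phi^{-1}(0)$. Once this is granted — and it is exactly the content of \cite[Sec. 4.2]{G-PGM}, which we invoke — the vanishing of $\mathbb{H}^2$ immediately yields smoothness. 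So the proof is short: cite the general local structure theorem, observe $\mathbb{H}^0(C^\bullet)=0$ from simplicity, note $\mathbb{H}^2(C^\bullet)=0$ from the hypothesis, and conclude that the local model is a smooth manifold.
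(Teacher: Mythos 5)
Your proposal is correct and follows exactly the route the paper intends: the paper states this proposition without an explicit proof, as an adaptation of the general Kuranishi/slice local structure theorem of \cite[Sec.~4.2]{G-PGM}, which is precisely what you invoke — vanishing of $\mathbb{H}^2$ kills the obstruction map, and stability plus simplicity reduce the local model to the vector space $\mathbb{H}^1(C^\bullet(\mathcal{O},W,Q_W,\eta))$. No gaps; your remark that $\mathbb{H}^0(C^\bullet)=0$ is consistent with the paper's observation that stability implies infinitesimal simplicity and that the relevant centre is trivial here.
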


A $G$-Higgs bundle $(E,\varphi)$ is \textbf{infinitesimally
simple} if
$\End(E,\varphi)\cong\mathbb{H}^0(C^\bullet(E,\varphi))$ is
isomorphic to $H^0(E(\ker d\iota\cap\mathfrak{z}))$. Stable
implies infinitesimally simple.

Let $(\mathcal{O},W,Q_W,\eta)$ be an $\SO_0(1,n)$-Higgs bundle and
consider the associated $\SO(n+1,\mathbb{C})$-Higgs bundle
$(\textbf{E},Q,\phi)$ and the deformation complex
$$\xymatrix{C^\bullet(\textbf{E},Q,\phi):
\mathfrak{so}(\textbf{E})\ar[r]^-{\ad(\varphi)}&
\mathfrak{so}(\textbf{E})\otimes K.}$$ Since $\SO(n+1,\mathbb{C})$
is complex, infinitesimally simple in this case means
$\mathbb{H}^0(C^\bullet(\textbf{E},Q,\phi))=0$ ($\ker
d\iota=\ker(\ad)=0$) and, as in the real case, stable implies
infinitesimally simple. There is an isomorphism
$$\mathbb{H}^2(C^\bullet(\textbf{E},Q,\phi))=\mathbb{H}^0(C^\bullet(\textbf{E},Q,\phi))^*,$$
and we have the following relation
$$\mathbb{H}^0(C^{\bullet}(\textbf{E},Q,\phi))\cong\mathbb{H}^0(C^{\bullet}(E,\varphi))\oplus
\mathbb{H}^2(C^{\bullet}(E,\varphi))^*.$$ Then, if
$(\textbf{E},Q,\phi)$ is stable,
$\mathbb{H}^0(C^\bullet(\textbf{E},Q,\phi))=0$ and this implies
$$\mathbb{H}^0(C^{\bullet}(E,\varphi))=
\mathbb{H}^2(C^{\bullet}(E,\varphi))=0.$$

Using Proposition \ref{stabilitycorrespondence} we obtain the
following description.

\begin{corollary}\label{smooth}
If an $\SO_0(1,n)$-Higgs bundle $(\mathcal{O},W,Q_W,\eta)$ is
stable and simple, then it is a smooth point of the moduli space.
\end{corollary}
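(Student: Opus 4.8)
The plan is to combine the deformation-theoretic criterion of Proposition \ref{smoothness} with the vanishing of $\mathbb{H}^2$ obtained from the associated $\SO(n+1,\mathbb{C})$-Higgs bundle. So let $(\mathcal{O},W,Q_W,\eta)$ be a stable and simple $\SO_0(1,n)$-Higgs bundle, and let $(\textbf{E},Q,\phi)$ be its associated $\SO(n+1,\mathbb{C})$-Higgs bundle as in Section~1. By Proposition \ref{smoothness}, it suffices to verify that
$$\mathbb{H}^2(C^\bullet(\mathcal{O},W,Q_W,\eta))=0.$$

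First I would invoke Proposition \ref{stabilitycorrespondence}: since $(\mathcal{O},W,Q_W,\eta)$ is stable, the associated $\SO(n+1,\mathbb{C})$-Higgs bundle $(\textbf{E},Q,\phi)$ is stable. Since $\SO(n+1,\mathbb{C})$ is a complex group, stability implies infinitesimal simplicity in the strong sense $\mathbb{H}^0(C^\bullet(\textbf{E},Q,\phi))=0$, as recorded in the discussion preceding the corollary (here $\ker d\iota = \ker(\ad) = 0$ because the centre of $\mathfrak{so}(n+1,\mathbb{C})$ is trivial for $n+1\geq 3$). Then I would use the two displayed relations stated just above the corollary: the Serre-type duality $\mathbb{H}^2(C^\bullet(\textbf{E},Q,\phi))\cong\mathbb{H}^0(C^\bullet(\textbf{E},Q,\phi))^*$ together with the decomposition
$$\mathbb{H}^0(C^{\bullet}(\textbf{E},Q,\phi))\cong\mathbb{H}^0(C^{\bullet}(E,\varphi))\oplus\mathbb{H}^2(C^{\bullet}(E,\varphi))^*$$
coming from the splitting of the deformation complex of $(\textbf{E},Q,\phi)$ into the $\SO_0(1,n)$-part and its dual. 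From $\mathbb{H}^0(C^\bullet(\textbf{E},Q,\phi))=0$ and this direct sum decomposition, both summands vanish, so in particular
$$\mathbb{H}^2(C^\bullet(\mathcal{O},W,Q_W,\eta))=\mathbb{H}^2(C^\bullet(E,\varphi))=0,$$
which is exactly what we need.

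With that vanishing in hand, I would apply Proposition \ref{smoothness} directly: $(\mathcal{O},W,Q_W,\eta)$ is stable, simple, and has $\mathbb{H}^2(C^\bullet)=0$, hence it is a smooth point of the moduli space $\mathcal{M}(\SO_0(1,n))$. The only genuine obstacle is making sure the two relations among hypercohomology groups used above are legitimately available — but these are precisely stated in the paragraph preceding the corollary as consequences of adapting \cite[Sec.~4.2]{G-PGM}, so the argument is really just an assembly of already-established pieces. One should also double-check that the passage from "$(\textbf{E},Q,\phi)$ stable" to "$\mathbb{H}^0(C^\bullet(\textbf{E},Q,\phi))=0$" uses only $n\neq 2$ (so that the relevant centres behave as claimed), which is the standing hypothesis throughout this section.
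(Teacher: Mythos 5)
Your proposal is correct and follows essentially the same route as the paper: the paper also deduces $\mathbb{H}^0(C^{\bullet}(\textbf{E},Q,\phi))=0$ from stability of the associated $\SO(n+1,\mathbb{C})$-Higgs bundle via Proposition \ref{stabilitycorrespondence}, uses the direct sum decomposition of $\mathbb{H}^0(C^{\bullet}(\textbf{E},Q,\phi))$ to conclude $\mathbb{H}^2(C^{\bullet}(\mathcal{O},W,Q_W,\eta))=0$, and then applies Proposition \ref{smoothness}. Your assembly of these pieces matches the argument the paper gives in the paragraph preceding the corollary.
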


\begin{corollary}\label{H0yH2}
Let $(\mathcal{O},W,Q_W,\eta)$ be a stable $\SO_0(1,n)$-Higgs
bundle which re\-pre\-sents a smooth point of the moduli space,
then
$$\mathbb{H}^0(C^\bullet(\mathcal{O},W,Q_W,\eta))=\mathbb{H}^2(C^\bullet(\mathcal{O},W,Q_W,\eta))=0.$$
\end{corollary}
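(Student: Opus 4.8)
The plan is to reduce everything to the $\SO(n+1,\mathbb{C})$-side, where the deformation complex is self-dual. First I would invoke Proposition \ref{stabilitycorrespondence}: since $(\mathcal{O},W,Q_W,\eta)$ is stable as an $\SO_0(1,n)$-Higgs bundle, the associated $\SO(n+1,\mathbb{C})$-Higgs bundle $(\textbf{E},Q,\phi)$ is stable. Stability of $(\textbf{E},Q,\phi)$ forces infinitesimal simplicity, which for the complex group $\SO(n+1,\mathbb{C})$ means precisely $\mathbb{H}^0(C^\bullet(\textbf{E},Q,\phi))=0$ (because $\ker d\iota=\ker(\ad)=0$ for the adjoint representation of a semisimple group). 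This is exactly the chain of reasoning already laid out in the paragraph preceding Corollary \ref{smooth}.

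Next I would feed this into the two displayed relations in that same paragraph. By Serre duality on the self-dual complex $C^\bullet(\textbf{E},Q,\phi):\mathfrak{so}(\textbf{E})\to\mathfrak{so}(\textbf{E})\otimes K$ one has $\mathbb{H}^2(C^\bullet(\textbf{E},Q,\phi))\cong\mathbb{H}^0(C^\bullet(\textbf{E},Q,\phi))^*$, and moreover the decomposition $\textbf{E}=\mathcal{O}\oplus W$, $\phi=\left(\begin{smallmatrix}&\eta\\-\eta^\top&\end{smallmatrix}\right)$ yields
$$\mathbb{H}^0(C^\bullet(\textbf{E},Q,\phi))\cong\mathbb{H}^0(C^\bullet(\mathcal{O},W,Q_W,\eta))\oplus\mathbb{H}^2(C^\bullet(\mathcal{O},W,Q_W,\eta))^*.$$
Since the left-hand side vanishes, both summands on the right vanish, giving $\mathbb{H}^0(C^\bullet(\mathcal{O},W,Q_W,\eta))=\mathbb{H}^2(C^\bullet(\mathcal{O},W,Q_W,\eta))=0$, which is the assertion.

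I should note that the hypothesis that $(\mathcal{O},W,Q_W,\eta)$ \emph{represents a smooth point of the moduli space} is, given Corollary \ref{smooth}, essentially automatic once stability is assumed in the simple case; but in the statement it is retained to cover the situation uniformly, and in any event the argument above uses only stability plus Proposition \ref{stabilitycorrespondence} and never the smoothness hypothesis directly. The main point to be careful about --- and the only place a genuine verification is needed --- is the isomorphism relating $\mathbb{H}^0$ of the complex $C^\bullet(\textbf{E},Q,\phi)$ to the hypercohomology of $C^\bullet(\mathcal{O},W,Q_W,\eta)$: one must check that the off-diagonal block structure of $\phi$ identifies the $(\mathcal{O},W)$-block of $\mathfrak{so}(\textbf{E})$-endomorphisms commuting with $\phi$ with $\mathbb{H}^0$ of the $\SO_0(1,n)$-complex, and the remaining block (paired by $Q$) with $\mathbb{H}^2$ via Serre duality on $C^\bullet(\mathcal{O},W,Q_W,\eta)$. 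This is the adaptation of \cite[Sec.~4.2]{G-PGM} to the present setting; everything else is a formal consequence of stability.
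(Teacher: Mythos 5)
Your proposal is correct and follows essentially the same route as the paper: the paper also passes to the associated $\SO(n+1,\mathbb{C})$-Higgs bundle via Proposition \ref{stabilitycorrespondence}, uses stability to get $\mathbb{H}^0(C^\bullet(\textbf{E},Q,\phi))=0$, and then reads off the vanishing of both $\mathbb{H}^0$ and $\mathbb{H}^2$ of the real complex from the decomposition $\mathbb{H}^0(C^{\bullet}(\textbf{E},Q,\phi))\cong\mathbb{H}^0(C^{\bullet}(E,\varphi))\oplus \mathbb{H}^2(C^{\bullet}(E,\varphi))^*$. Your side remark that the smoothness hypothesis is not actually needed is consistent with the paper, which likewise derives the conclusion from stability alone.
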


The \textbf{expected dimension} of the moduli space
$\mathcal{M}(\SO_0(1,n))$ (see \cite{G-PGM}), is
$$\dim\mathbb{H}^1(C^\bullet(\mathcal{O},W,Q_W,\eta))=-\chi(C^\bullet(\mathcal{O},W,Q_W,\eta))=
\frac{n(n+1)(g-1)}{2},$$ where
$\dim(\SO_0(1,n))=\frac{n(n+1)}{2}$.

\section{Stable and non-simple $\SO(n)$ and $\SO_0(1,n)$-Higgs
bundles}

In this section we give a description of the stable $\SO(n)$ and
$\SO_0(1,n)$-Higgs bundles that fail to be simple.

\begin{lemma}\label{summandnonstable0}
If an $\SO(n)$-Higgs bundle $(E,Q)$ decomposes as a sum of
$G_i$-Higgs bundles and one of them is an $\SO(n_i)$-Higgs bundle
with $n_i>2$ which is not stable or an $\SO(2)$-Higgs bundle, then
$(E,Q)$ is not stable.
\end{lemma}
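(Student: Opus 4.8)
The plan is to reduce to the simplification of the stability condition proved in Proposition \ref{stability2} by exhibiting, inside $(E,Q)$, a concrete $\phi$-invariant isotropic subbundle of non-negative degree. Write the given decomposition as $(E,Q)=(E_0,Q_0)\oplus(E',Q')$, where $(E_0,Q_0)$ is the distinguished summand (either a non-stable $\SO(n_i)$-Higgs bundle with $n_i>2$, or an $\SO(2)$-Higgs bundle) and $(E',Q')$ collects the remaining $G_i$-summands; the quadratic form $Q$ restricted to $E_0$ is $Q_0$ and the two factors are $Q$-orthogonal, and the Higgs field is block-diagonal, so any subbundle of $E_0$ that is isotropic for $Q_0$ and preserved by the restricted Higgs field is automatically isotropic for $Q$ and preserved by $\phi$ inside $E$.

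First I would handle the $\SO(2)$ case. Here $E_0=L\oplus L^{-1}$ with the hyperbolic form, and at least one of the line bundles $L$, $L^{-1}$ has degree $\geq 0$; that line bundle is isotropic for $Q_0$, hence isotropic in $E$, and since a principal $\SO(2,\mathbb C)$-bundle carries zero Higgs field it is trivially $\phi$-invariant. By Proposition \ref{stability2} the existence of a non-zero isotropic $\phi$-invariant subbundle of non-negative degree shows $(E,Q)$ is not stable. (If both have degree $0$ one still gets a strict violation of the strict inequality required for stability, namely $\deg\geq 0$ rather than $<0$.)

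Next I would treat the case where $(E_0,Q_0)$ is an $\SO(n_i)$-Higgs bundle with $n_i>2$ that is not stable. By Proposition \ref{stability2} applied to $(E_0,Q_0)$ itself, non-stability means there is a non-zero isotropic subbundle $E_0'\subset E_0$ with $\phi_0(E_0')\subseteq E_0'\otimes K$ and $\deg E_0'\geq 0$ (negating the ``$\deg<0$'' clause; if $(E_0,Q_0)$ fails even semistability one gets $\deg E_0'>0$, which is even stronger). Viewing $E_0'$ inside $E$, it is isotropic for $Q$ and $\phi$-invariant, so again Proposition \ref{stability2} gives that $(E,Q)$ is not stable.

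The only genuine subtlety — and the step I expect to be the main obstacle — is making sure the passage from ``isotropic/invariant in $E_0$'' to ``isotropic/invariant in $E$'' is airtight, i.e. that the decomposition is orthogonal with block-diagonal Higgs field in the precise sense of the polystable decomposition (Definition \ref{stability1}, the splitting clause), and that ``not stable'' is correctly negated: it could fail semistability, or be semistable but strictly polystable, and in either sub-case the produced subbundle has $\deg\geq 0$, which is exactly what is needed to conclude via Proposition \ref{stability2} that $(E,Q)$ is not stable. Once this bookkeeping is set up, the proof is a two-line application of Proposition \ref{stability2} in each case.
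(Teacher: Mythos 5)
Your argument is correct and follows essentially the same route as the paper's proof: in the $\SO(2)$ case one of the two opposite-degree isotropic line subbundles has non-negative degree, and in the non-stable $\SO(n_i)$ case the negation of the stability criterion of Proposition \ref{stability2} produces a non-zero isotropic subbundle of non-negative degree, which remains isotropic in $E$ because $Q$ restricts to $Q_i$ on the summand. The paper's version is terser (it does not spell out the negation of stability or the orthogonality of the splitting), but the substance is identical.
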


\begin{proof}
If there is a summand which is an $\SO(2)$-Higgs bundle
$E_i=L\oplus L^{-1}$, the isotropic subbundles $L$ and $L^{-1}$,
which have opposite degrees, do not violate the stability
condition for $E$ but they violate the stability condition for
$(E,Q)$. If a summand $(E_i,Q_i)$ is a non-stable $\SO(n_i)$-Higgs
bundle, there is a proper isotropic subbundle $F_i\subset E_i$
such that $\deg F_i\geq 0$. Since $Q_i$ is the restriction of $Q$
to $E_i$, $F_i$ is an isotropic subbundle of $E$ that violates its
stability.
\end{proof}

\begin{theorem}\label{stablenonsimpleSO(n)}
Let $(E,Q)$ be a stable $\SO(n)$-Higgs bundle with $n\neq 2$, that
is, a principal $\SO(n,\mathbb{C})$-bundle, which is not simple,
then it decomposes as a sum of stable and simple $\SO(n_i)$-Higgs
bundles with $n_i\neq 2$. Moreover, in the decomposition there
must be at least an $\SO(n_i)$-Higgs bundle with $n_i$ odd.
\end{theorem}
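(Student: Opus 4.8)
The plan is to argue by induction on the rank $n$, using the structure theory already in place. First I would unpack what it means for a stable $\SO(n,\mathbb{C})$-bundle $(E,Q)$ to fail to be simple: by the definition of simplicity, $\Aut(E,Q)$ strictly contains $Z(\SO(n,\mathbb{C}))$, so there is an automorphism $f\colon E\to E$ preserving $Q$ which is not a scalar from the centre. Since $f$ is an automorphism of a bundle with no Higgs field, $f$ is a holomorphic global section of $\mathrm{Aut}(E,Q)\subset\SO(E)$; I would then use the fact that, because $(E,Q)$ is stable (hence polystable) and the endomorphism algebra of a polystable object is reductive, $f$ is semisimple. Its eigenvalues are constant (a stable orthogonal bundle is in particular semistable of degree zero, so any eigenbundle decomposition is by subbundles of degree zero, and the eigenvalue functions are holomorphic hence constant on the connected surface $X$). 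This yields an eigenbundle decomposition $E=\bigoplus_\lambda E_\lambda$.

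Next I would analyse how $Q$ interacts with this decomposition. Because $f$ preserves $Q$, we have $Q(E_\lambda,E_\mu)=0$ unless $\lambda\mu=1$. So $Q$ pairs $E_\lambda$ with $E_{\lambda^{-1}}$ nondegenerately; in particular each $E_\lambda$ with $\lambda\neq\pm1$ is isotropic, and $E_{+1}$ and $E_{-1}$ (if present) carry nondegenerate orthogonal structures by restriction. Now stability of $(E,Q)$ forces $\deg E_\lambda\le 0$ for every isotropic subbundle, but applying this to both $E_\lambda$ and $E_{\lambda^{-1}}$ and using that these degrees are opposite (their sum is $\deg(E_\lambda\oplus E_{\lambda^{-1}})$, a subbundle which is isotropic only if... ) — more carefully, stability applied to the isotropic subbundle $E_\lambda$ gives $\deg E_\lambda<0$ unless $E_\lambda=0$, and the same for $E_{\lambda^{-1}}$, contradicting $\deg E_\lambda=-\deg E_{\lambda^{-1}}$. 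Hence there are no eigenvalues other than $\pm1$, and $f=f_+\oplus f_-$ acts as $+1$ on $E_{+1}$ and $-1$ on $E_{-1}$, with $E=E_{+1}\oplus^{\perp} E_{-1}$ an orthogonal direct sum. Since $f$ is not central, both summands are nonzero (if $E_{-1}=0$ then $f=\mathrm{id}$; if $E_{+1}=0$ and $n$ is even then $f=-\mathrm{id}$ is central, contradiction, and if $n$ is odd $-\mathrm{id}\notin\SO$, so $E_{+1}\neq 0$ automatically). Each of $(E_{\pm1},Q|)$ is an $\SO(n_\pm)$-Higgs bundle (both $n_\pm\geq 1$), and by Lemma \ref{summandnonstable0} each summand must itself be stable with $n_\pm\neq 2$, since otherwise $(E,Q)$ would not be stable. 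I would then recurse on each summand that is still non-simple, the process terminating because the rank strictly drops, arriving at a decomposition into stable simple $\SO(n_i)$-Higgs bundles with all $n_i\neq 2$.

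Finally, for the parity statement: suppose for contradiction that every $n_i$ in the decomposition is even. Then each $\SO(n_i,\mathbb{C})$ has centre $\{\pm I_{n_i}\}$, so choosing $+I$ on all summands but $-I$ on one summand produces an automorphism of $(E,Q)$ with determinant $(-1)^{n_i}=+1$, i.e. it lies in $\SO(n,\mathbb{C})$, and it is not $\pm I_n$ as soon as there are at least two summands; if there is only one summand then $(E,Q)$ is itself stable and simple, contrary to hypothesis. Either way this contradicts... no: actually this automorphism is a legitimate non-central element, but the point is the opposite — I want to show the original non-simplicity forces an odd piece. Reconsider: if all $n_i$ are even, the automorphism $f$ we extracted, being $\pm1$ on orthogonal even-rank pieces, could in principle be central only if there is one piece; with $\geq 2$ even pieces the bundle genuinely is non-simple and that is consistent, so I must instead locate where oddness is forced. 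The correct observation is: $n=\sum n_i$, and if $n$ is odd then at least one $n_i$ is odd trivially — but the theorem does not assume $n$ odd. So the real content is that a \emph{simple} stable $\SO(n_i)$-bundle with $n_i$ even still has the non-central-looking central element $-I$, yet to build the \emph{non-simple} $(E,Q)$ from simple pieces via an orthogonal sum we used an $f$ acting as $\pm1$; for $f$ to be non-central we needed at least one "$-1$" block and at least one "$+1$" block, and a single simple even-rank piece with $f=-I$ is central. Thus non-simplicity is equivalent to the decomposition having $\geq 2$ pieces OR a piece on which the relevant centre is trivial; tracking this, the minimal way to be non-simple with all-even pieces needs $\geq 2$ summands, which is allowed — so in fact the theorem's last clause must be proved differently, likely by a determinant/orientation obstruction: the structure group reductions to $\SO(n_1,\mathbb{C})\times\cdots\times\SO(n_k,\mathbb{C})$ with all $n_i$ even would give an $\mathrm{SO}$-bundle admitting a reduction to $\prod\mathrm{SO}(2m_i)$, hence with even-dimensional... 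The hard part — and the step I expect to be the genuine obstacle — is precisely pinning down why at least one $n_i$ must be odd; I would resolve it by noting that a stable $\SO(2m,\mathbb{C})$-bundle that is simple is \emph{not} stabilised by any orientation-preserving non-central automorphism, so a non-simple stable orthogonal bundle must, after the above reduction, contain two isomorphic simple pieces swapped by $f$ \emph{or} a single piece of odd rank on which $-I$ is forbidden from $\SO$; a careful case analysis of $\Aut$ of $\prod\SO(n_i,\mathbb{C})$-reductions, comparing with $Z(\SO(n,\mathbb{C}))$, shows non-simplicity is impossible when all $n_i$ are even and the simple pieces are pairwise non-isomorphic, while isomorphic even pieces can be merged — so some $n_i$ is odd. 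Everything else is routine bookkeeping with the stability criterion of Lemma \ref{summandnonstable0} and Proposition \ref{stability2}.
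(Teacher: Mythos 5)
Your first two paragraphs reproduce the paper's argument essentially step for step: a non-central automorphism $f$, semisimplicity of $f$ from reductivity of the automorphism group, the eigenbundle decomposition $E=\bigoplus E_\lambda$ with constant eigenvalues, the observation that $Q(E_\lambda,E_\mu)=0$ unless $\lambda\mu=1$ so that $E_\lambda\cong E_{\lambda^{-1}}^*$ is isotropic for $\lambda\neq\pm1$, the degree contradiction from stability, the resulting orthogonal splitting $E=E_{+1}\perp E_{-1}$ with both pieces nonzero and non-central, stability of the pieces and the exclusion of $n_i=2$ via Lemma \ref{summandnonstable0}, and induction on the rank. That part is correct and is exactly the paper's proof.

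The parity clause is where the proposal breaks down: you never actually prove it, and your concluding assertion --- that non-simplicity is impossible when all the $n_i$ are even and the simple pieces are pairwise non-isomorphic --- is false and contradicts your own computation a few lines earlier. Indeed, if $E=E_1\perp E_2$ with $n_1,n_2$ even and both pieces stable, simple and non-isomorphic, then $\mathrm{diag}(-I_{n_1},I_{n_2})$ preserves $Q$, has determinant $(-1)^{n_1}=1$, and is not $\pm I_n$; so $(E,Q)$ is not simple although no $n_i$ is odd. You are right to sense that something is off here, but the source of the trouble is in the paper itself: the statement says ``at least one $n_i$ \emph{odd},'' while the paper's own proof of this theorem, the companion Theorem \ref{stablenonsimple}, and the later application in Theorem \ref{SO(1,2m+1)minima} all assert and use the opposite parity, ``at least one $n_i$ \emph{even}.'' The intended argument for the ``even'' version is the one your first paragraph already contains the seed of: $\det f=(-1)^{\rk E_{-1}}=1$ forces $\rk E_{-1}$ to be even, and an even-rank summand is what carries the non-central automorphism $-I$ witnessing non-simplicity. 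Your attempted resolution (a vague case analysis of the automorphism group of the product reduction, ending in an orientation-obstruction heuristic) establishes neither version of the clause; a correct treatment has to compare the group $\{(\epsilon_i I_{n_i})\mid \epsilon_i=\pm1,\ \prod_i\epsilon_i^{n_i}=1\}$ with $Z(\SO(n,\mathbb{C}))$ explicitly and track the parities of the $n_i$ through the induction, rather than stopping at the first splitting.
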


\begin{proof}
Since $(E,Q)$ is not simple and $$Z(\SO(n,\mathbb{C}))=\left\{%
\begin{array}{ll}
    I_n, & \hbox{$n$ odd,} \\
    \pm I_n, & \hbox{$n$ even,} \\
\end{array}%
\right.$$ there is an automorphism $f\in\Aut(E,Q)\backslash\{\pm
I_n\}$ if $n$ even, or $f\in\Aut(E,\varphi)\backslash\{I_n\}$ if
$n$ is odd.

Suppose that $f=\lambda I_n$ with $\lambda\in\mathbb{C}^*$. It has
to preserve the orthogonal structure of $E$, that is,
$$Q(f(e),f(e'))=\lambda^2 Q(e,e')=Q(e,e'),$$ and this happens if
and only if $\lambda=\pm 1$. On the other hand, the determinant of
$f$ has to be equal to one. Then, the only possibilities are
$f=\pm I_n$ if $n$ is even and $f=I_n$ if $n$ is odd, which are
exactly the cases that we are excluding.

The group $\Aut(E,\varphi)$ is reductive. This implies that $f$
may be chosen in such a way that there is a splitting $E=\bigoplus
E_i$ such that $f$ restricted to $E_i$ is $\lambda_i I_n$ with
$\lambda_i\in\mathbb{C}^*$.

Since
$$Q(e_i,e_j)=Q(f(e_i),f(e_j))=\lambda_i\lambda_j
Q(e_i,e_j),$$ then $Q(E_i,E_j)$ can only be non-zero when
$\lambda_i\lambda_j=1$. Since $Q$ is non-degenerate, the possible
values of lambda come in pairs $(\lambda_i,\lambda_i^{-1})$
corresponding to $(E_i,E_i^*)$. If $\lambda_i=\pm 1$, we have
$\lambda_i=\lambda_i^{-1}$ and then $E_1\cong E_1^*$ and
$E_{-1}\cong E_{-1}^*$. Since $\det f=\prod_{i}\lambda_i^{\rk
E_i}=1$, we do not have the value $\lambda_i=0$.

Suppose that there is a $\lambda_i\neq\pm 1$, then $E_i\subset E$
is an isotropic subbundle of $E$. If $\deg E_i\geq 0$, this
subbundle violates the stability condition for $(E,Q)$. If $\deg
E_i<0$, then $\deg E_i^*>0$ and again $(E,Q)$ is not stable. Hence
$\lambda_i=\pm 1$ and $(E,Q)=(E_1,Q_1)\oplus(E_{-1},Q_{-1})$.

From Lemma \ref{summandnonstable0} we have that these summands are
stable $\SO(n_i)$-Higgs bundles with $n_i\neq 2$.

If there is a summand which is a non-simple $\SO(n_i)$-Higgs
bundle, applying the argument of this proof inductively we
conclude that a stable but non-simple $\SO(n)$-Higgs bundle can be
decomposed as a sum of smooth $\SO(n_i)$-Higgs bundles.

Finally, since $(E,Q)$ is not simple, there must be at least an
$\SO(n_i)$-Higgs bundle with $n_i$ even in the decomposition. This
condition allows us to take the automorphism $-1$ in this summand
and guarantee the non-simplicity.
\end{proof}

\begin{lemma}\label{summandnonstable1}
If an $\SO_0(1,n)$-Higgs bundle $(\mathcal{O},W,Q_W,\eta)$
decomposes as a sum of $G_i$-Higgs bundles and one of them is an
$\SO_0(1,n_i)$-Higgs bundle $(\mathcal{O},W_i,Q_W,\eta_i)$ which
is not stable, then $(\mathcal{O},W,Q_W,\eta)$ is not stable.
\end{lemma}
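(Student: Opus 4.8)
The plan is to prove the contrapositive in the usual way: exhibit an isotropic subbundle of $W$ that violates the stability inequality of Proposition~\ref{stability4}, built from the one that destabilizes the summand $(\mathcal{O},W_i,Q_W,\eta_i)$. Write the decomposition as $(\mathcal{O},W,Q_W,\eta)\simeq\bigoplus_j(\mathcal{O},W_j,Q_{W,j},\eta_j)$ coming from Theorem~\ref{polystable}, so that $W\simeq\bigoplus_j W_j$ with $Q_W(W_j,W_k)=0$ for $j\ne k$, and $\eta=\sum_j\eta_j$ with $\eta_j$ supported on $W_j$ and vanishing on all other summands. Since the distinguished summand $(\mathcal{O},W_i,Q_W,\eta_i)$ is not stable, Proposition~\ref{stability4} (applied to the $\SO_0(1,n_i)$-Higgs bundle $(\mathcal{O},W_i,Q_W,\eta_i)$, with $Q_W$ meaning its restriction to $W_i$) gives a nonzero isotropic subbundle $W_i'\subset W_i$ with $\eta_i(W_i')=0$ and $\deg W_i'\ge 0$.

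The key observation is that $W_i'$, viewed as a subbundle of $W$ via the inclusion $W_i\hookrightarrow W$, remains isotropic for $Q_W$: this is because $Q_W$ restricted to $W_i$ is exactly the quadratic form for which $W_i'$ is isotropic, and $W_i$ is $Q_W$-orthogonal to the other summands, so no new pairings arise. Likewise $\eta(W_i')=0$: on $W_i$ the full Higgs field $\eta$ acts as $\eta_i$ (the components $\eta_j$ for $j\ne i$ vanish on $W_i$), and $\eta_i(W_i')=0$ by construction. Thus $W_i'$ is a nonzero isotropic subbundle of $W$ with $\eta(W_i')=0$ and $\deg W_i'\ge 0$, which by Proposition~\ref{stability4} shows that $(\mathcal{O},W,Q_W,\eta)$ fails to be stable.

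The main point to be careful about — really the only place an obstacle could hide — is the compatibility of the orthogonal and Higgs-field structures under the decomposition: one must make sure that the splitting $W\simeq\bigoplus_j W_j$ provided by polystability is genuinely $Q_W$-orthogonal (so that isotropy is preserved by the inclusion) and that $\eta$ is block-diagonal with respect to it (so that $\eta|_{W_i}=\eta_i$). Both of these are exactly the properties recorded in Theorem~\ref{polystable} and its proof, so this is routine; no genuinely hard step remains, and the argument parallels the proof of Lemma~\ref{summandnonstable0} for the $\SO(n)$ case.
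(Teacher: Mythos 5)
Your proposal is correct and follows essentially the same route as the paper's (very brief) proof: extract a nonzero isotropic subbundle $W'\subset W_i$ with $\eta_i(W')=0$ and $\deg W'\geq 0$ from the non-stability of the summand, and observe that it remains isotropic in $W$ with $\eta(W')=0$, violating the stability criterion of Proposition~\ref{stability4}. You merely make explicit the $Q_W$-orthogonality of the splitting and the block-diagonality of $\eta$, which the paper leaves implicit.
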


\begin{proof}
Since $(\mathcal{O},W_i,Q_W,\eta_i)$ is not stable, there is an
isotropic subbundle $W'\subset W_i$ (such that
$\eta_i(W')\subseteq \mathcal{O}\otimes K$) with $\deg W'\geq 0$.
But $W'$ is also an isotropic subbundle of $W$ and violates the
stability condition for $(\mathcal{O},W,Q_W,\eta)$.
\end{proof}

\begin{lemma}\label{summandnonstable2}
If an $\SO_0(1,n)$-Higgs bundle $(\mathcal{O},W,Q_W,\eta)$
decomposes as a sum of $G_i$-Higgs bundles and one of them is an
$\SO(2)$-Higgs bundle or an $\SO(n_i)$-Higgs bundle which is not
stable, then $(\mathcal{O},W,Q_W,\eta)$ is not stable.
\end{lemma}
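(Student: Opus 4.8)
The plan is to follow verbatim the pattern of Lemmas~\ref{summandnonstable0} and~\ref{summandnonstable1}: in each of the two cases permitted by the hypothesis I will produce a nonzero isotropic subbundle of $W$ on which $\eta$ vanishes and whose degree is nonnegative, and then invoke the stability criterion of Proposition~\ref{stability4} to conclude that $(\mathcal{O},W,Q_W,\eta)$ is not stable. Throughout I use that, in the orthogonal decomposition of $W$ furnished by Theorem~\ref{polystable}, the Higgs field $\eta$ is supported on the unique $\SO_0(1,n_i)$ summand and vanishes identically on every $\SO(n_i)$ and $\U(n_i)$ summand; this is the statement that the inclusions $\SO(n_i)\subset\SO_0(1,n)$ carry trivial Higgs field, as recorded just before Theorem~\ref{polystable}.

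First I would suppose that one of the summands is an $\SO(2)$-Higgs bundle, that is, a principal $\SO(2,\mathbb{C})$-bundle $E_i=L\oplus L^{-1}$ equipped with the standard hyperbolic form, included in $W$ as an orthogonal direct summand and carrying zero Higgs field. Then $L$ and $L^{-1}$ are isotropic in $E_i$, hence isotropic in $W$; after possibly interchanging them we may assume $\deg L\geq 0$; and $\eta(L)=0$. Thus $L$ is a nonzero isotropic subbundle of $W$ with $\eta(L)=0$ and $\deg L\geq 0$, so Proposition~\ref{stability4} shows that $(\mathcal{O},W,Q_W,\eta)$ is not stable.

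Next I would suppose that one of the summands is an $\SO(n_i)$-Higgs bundle $(W_i,Q_i)$, necessarily with $n_i\neq 2$, which is not stable; here $Q_i$ is the restriction of $Q_W$ to $W_i$ and $\eta|_{W_i}=0$. By Proposition~\ref{stability2}, applied with vanishing Higgs field, non-stability of $(W_i,Q_i)$ provides a nonzero isotropic subbundle $F_i\subset W_i$ with $\deg F_i\geq 0$. Since $F_i$ is isotropic for $Q_i=Q_W|_{W_i}$ it is isotropic in $W$, and $\eta(F_i)=0$ because $F_i\subset W_i$. Applying Proposition~\ref{stability4} once more shows that $(\mathcal{O},W,Q_W,\eta)$ is not stable, which completes the proof.

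There is no real obstacle here: the argument is a direct transcription of the two preceding lemmas, and the only point that deserves an explicit word is the vanishing of $\eta$ on the $\SO(n_i)$ and $\SO(2)$ summands, which I have flagged above.
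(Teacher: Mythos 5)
Your proof is correct and is essentially the argument the paper intends: the paper's own proof of this lemma simply says it "can be deduced from the proof of Lemma \ref{summandnonstable0} and Lemma \ref{summandnonstable1}," and what you have written is precisely that deduction, producing in each case a nonzero isotropic subbundle of $W$ of nonnegative degree annihilated by $\eta$ and invoking Proposition \ref{stability4}. Your explicit remark that $\eta$ vanishes on the $\SO(n_i)$ and $\SO(2)$ summands is a worthwhile clarification consistent with the conventions set out before Theorem \ref{polystable}.
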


\begin{proof}
It can be deduced from the proof of Lemma \ref{summandnonstable0}
and Lemma \ref{summandnonstable1}.
\end{proof}

\begin{theorem}\label{stablenonsimple}
Let $(\mathcal{O},W,Q_W,\eta)$ be a stable $\SO_0(1,n)$-Higgs
bundle which is not simple, then it decomposes as a sum of stable
and simple $\SO_0(1,n_i)$-Higgs bundles and stable and simple
$\SO(n_i)$-Higgs bundles with $n_i\neq 2$. Moreover, in the
decomposition there must be at least an $\SO(n_i)$-Higgs bundle
with $n_i$ even.
\end{theorem}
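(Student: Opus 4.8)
The plan is to follow closely the scheme of the proof of Theorem \ref{stablenonsimpleSO(n)}, adding the extra constraint coming from the Higgs field $\eta$.

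First I would use non-simplicity to manufacture a semisimple automorphism and read off its eigenbundle structure. Recall that $\Aut(\mathcal{O},W,Q_W,\eta)=\{(1,g):g\in\Aut(W,Q_W),\ \eta\circ g=\eta\}$, so non-simplicity gives such a $g\neq I_n$. The automorphism group of a polystable $G$-Higgs bundle is reductive, so $g$ may be chosen semisimple, yielding an eigenbundle decomposition $W=\bigoplus_\lambda W_\lambda$. Two facts are then immediate: since $g$ preserves $Q_W$ we get $Q_W(W_\lambda,W_\mu)=0$ unless $\lambda\mu=1$, and, $Q_W$ being non-degenerate, the eigenvalues pair up as $\lambda,\lambda^{-1}$ with $W_{\lambda^{-1}}\cong W_\lambda^{*}$ (hence $\rk W_{\lambda^{-1}}=\rk W_\lambda$); and since $\eta\circ g=\eta$ we get $\lambda\,\eta|_{W_\lambda}=\eta|_{W_\lambda}$, so $\eta|_{W_\lambda}=0$ for every $\lambda\neq 1$.

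Next I would rule out eigenvalues $\neq\pm1$. If some $\lambda\neq\pm1$ occurred, then $W_\lambda$ and $W_{\lambda^{-1}}$ would be non-zero isotropic subbundles of $W$ on which $\eta$ vanishes, so by the stability condition (Proposition \ref{stability4}) both would have negative degree; but $\deg W_{\lambda^{-1}}=\deg W_\lambda^{*}=-\deg W_\lambda$, a contradiction. Hence $g$ has eigenvalues in $\{1,-1\}$ only, so $W=W_1\oplus W_{-1}$ is a $Q_W$-orthogonal direct sum with $\eta\in H^0(\Hom(W_1,\mathcal{O})\otimes K)$. As $g\neq I_n$ we have $W_{-1}\neq 0$, and as $\det g=(-1)^{\rk W_{-1}}=1$ the rank of $W_{-1}$ is even. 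Thus
\[
(\mathcal{O},W,Q_W,\eta)=(\mathcal{O},W_1,Q_W|_{W_1},\eta)\oplus(W_{-1},Q_W|_{W_{-1}}),
\]
an $\SO_0(1,\rk W_1)$-Higgs bundle together with an $\SO(\rk W_{-1})$-Higgs bundle of even positive rank; by Lemmas \ref{summandnonstable1} and \ref{summandnonstable2} both summands are stable and, in particular, $\rk W_{-1}\neq 2$.

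Finally I would iterate. If both summands are simple, the decomposition is as asserted and $(W_{-1},Q_W)$ provides an $\SO(n_i)$-summand of even rank. If the $\SO_0(1,\cdot)$-summand is not simple, its underlying bundle has strictly smaller rank (as $W_{-1}\neq0$), so induction on $\rk W$ applies; if the $\SO(\cdot)$-summand is not simple, Theorem \ref{stablenonsimpleSO(n)} splits it into stable simple $\SO(n_i)$-Higgs bundles with $n_i\neq 2$, at least one of even rank. Reassembling gives the stated decomposition, and the even-rank $\SO$-summand descending from $W_{-1}$ survives, proving the last clause. The point demanding the most care is the degenerate case $\eta=0$ — where $W_1$ may vanish and the would-be $\SO_0(1,\cdot)$-summand is really the trivial line bundle together with an orthogonal bundle, so the whole statement must be reduced to Theorem \ref{stablenonsimpleSO(n)} and no $\SO_0(1,\cdot)$-summand appears — together with checking, via Lemmas \ref{summandnonstable0}--\ref{summandnonstable2}, that each summand produced in the induction is genuinely stable; this is the analogue of the corresponding step in the proof of Theorem \ref{stablenonsimpleSO(n)}.
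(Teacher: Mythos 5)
Your argument is essentially the paper's own proof: diagonalize a non-central automorphism $(1,g)$ using reductivity of the automorphism group, pair the eigenvalues via $Q_W$ into $(\lambda,\lambda^{-1})$, kill $\eta$ off the $+1$-eigenbundle, rule out eigenvalues $\neq\pm1$ by stability, obtain $W=W_1\oplus W_{-1}$ with $\rk W_{-1}$ even and nonzero, verify stability of the summands with Lemmas \ref{summandnonstable1} and \ref{summandnonstable2}, iterate, and treat $\eta=0$ separately by reduction to Theorem \ref{stablenonsimpleSO(n)}. The only cosmetic difference is that you justify the final even-rank clause by tracking $W_{-1}$ through the induction, whereas the paper argues it by exhibiting the automorphism $-1$ on an even-rank summand; both are at the same level of detail.
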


\begin{proof}
Suppose that the Higgs field is equal to zero, then the
$\SO_0(1,n)$-Higgs bundle $(\mathcal{O},W,Q_W,\eta)$ is the sum of
the trivial bundle together with a stable principal
$\SO(n,\mathbb{C})$-bundle $(W,Q_W)$, that is, a stable
$\SO(n)$-Higgs bundle. If $(W,Q_W)$ is simple, then we have the
result. If it is not, we conclude using Theorem
\ref{stablenonsimpleSO(n)}.

Suppose now that $\eta\neq 0$. Since $(\mathcal{O},W,Q_W,\eta)$ is
not simple, there is an automorphism
$f\in\Aut(\mathcal{O},W,Q_W,\eta)\backslash\{I\}$. If
$f=(f_1,f_2)$, since $f_1\in\Aut(\mathcal{O})$, we have $f_1=1$.

Suppose that $f=(f_1,f_2)=(1,\mu I)$ is a multiple of the identity
in $W$ ($\mu\in\mathbb{C}^*$). The determinant of $f_2$ has to be
equal to $1$ and $f_2$ has to preserve the orthogonal structure,
that is,
$$Q_W(f_2(w),f_2(w'))=\mu^2 Q_W(w,w')=Q_W(w,w').$$
On the other hand, since we are supposing that $f_2$ is a multiple
of the identity, the condition $f_1\circ\eta=\eta\circ f_2$ is
equivalent to $f_1=f_2$, that is $f=I$, which is exactly the case
that we are excluding. Thus, $f$ is not of this form.

Since the group $\Aut(W,Q_W)$ is reductive, there is a splitting
$W=\bigoplus W_i$ such that $f_2=\mu_i I$ in $W_i$
($\mu_i\in\mathbb{C}^*$). Since
$$Q_W(w_i,w_j)=Q_W(f_2(w_i),f_2(w_j))=\mu_i\mu_j
Q_W(w_i,w_j),$$ then $Q_W(W_i,W_j)$ can only be non-zero when
$\mu_i\mu_j=1$. Since $Q_W$ is non-degenerate, the possible values
of mu come in pairs $(\mu_i,\mu_i^{-1})$ corresponding to
$(W_i,W_i^*)$. If $\mu_i=\pm 1$, we have $\mu_i=\mu_i^{-1}$ and
then $W_1\cong W_1^*$ and $W_{-1}\cong W_{-1}^*$. Since $\det
f_2=\prod_{i}\mu_i^{\rk W_i}=1$, we do not have $\mu_i=0$.

Since $f$ preserve the Higgs field, for each component $\eta_i\in
H^0(\Hom(W_i,\mathcal{O})\otimes K)$, we have that
$$\eta_i(f_2(w))=\mu_i\eta_i(w)$$ is equal to $$f_1(\eta_i(w))=\eta_i(w),$$ for all $w\in
W_i$, and then, $\mu_i\neq 1$ implies $\eta_i=0$.

Suppose that there is a $\mu_i\neq\pm 1$. Then, in particular,
$\mu_i\neq 1$ and we have $\eta_i=0$, that is, $\eta(W_i)=0$.
Since
$$Q_W(W_i,W_i)=Q_W(f_2(W_i),f_2(W_i))=\mu_i^2
Q_W(W_i,W_i),$$ and $\mu_i^2\neq 1$, we have $Q_W(W_i,W_i)=0$ and
hence, $W_i\subset W$ is an isotropic subbundle. If $\deg W_i\geq
0$, this subbundle violates the stability condition for
$(\mathcal{O},W,Q_W,\eta)$. If $\deg W_i<0$, then $\deg W_i^*>0$
and again $(\mathcal{O},W,Q_W,\eta)$ is not stable and we get a
contradiction. Then $\mu_i=\pm 1$.

Since $1=\det f_2=1^{\rk W_1}\cdot(-1)^{\rk W_{-1}}$ we have $\rk
W_{-1}$ even.

We have the following decomposition
$$(\mathcal{O},W,Q_W,\eta)=(\mathcal{O},W_1,\eta_1)\oplus W_{-1}.$$
Since $f_2$ is not a multiple of the identity, $W_1$ and $W_{-1}$
are non-zero, and since $\eta\neq 0$, then $\eta_1\neq 0$. Thus,
$(\mathcal{O},W,Q_W,\eta)$ is a sum of a $\SO_0(1,n_i)$-Higgs
bundle $(\mathcal{O},W_1,\eta_1)$ together with an
$\SO(n_i)$-Higgs bundle $W_{-1}$.

From Lemma \ref{summandnonstable1} and Lemma
\ref{summandnonstable2} we have that these summands are stable
$G_i$-Higgs bundles ($\SO(n_i)$ with $n_i\neq 2$).

If $W_{-1}$ is non-simple, we have from Theorem
\ref{stablenonsimpleSO(n)} that it decomposes as a sum of stable
and simple orthogonal bundles. If $(\mathcal{O},W_1,\eta_1)$ is a
non-simple $\SO_0(1,n_i)$-Higgs bundle, applying the argument of
this proof inductively we conclude that it can be decomposed as a
sum of stable and simple $G_i$-Higgs bundles with
$G_i=\SO_0(1,n_i)$ and $\SO(n_i)$.

Since all the summands are simple and $(\mathcal{O},W,Q_W,\eta)$
is not simple, it must have at least one summand of this type: a
smooth $\SO(n_i)$-Higgs bundle with $n_i$ even. This condition
allow us to take the automorphism $-1$ in this summand and
guarantee the non-simplicity.
\end{proof}

\section{Topology of the moduli spaces}

Let $(\mathcal{O},W,Q_W,\eta)$ be an $\SO_0(1,n)$-Higgs bundle. We
have a topological invariant $c$ associated to it, which is given
by the following exact sequence
$$1\rightarrow\pi_1(\SO(n,\mathbb{C}))\rightarrow
\widetilde{\SO}(n,\mathbb{C})\rightarrow\SO(n,\mathbb{C})\rightarrow
1,$$ where $\widetilde{\SO}(n,\mathbb{C})$ is the universal cover
of $\SO(n,\mathbb{C})$ and the associated long cohomology sequence
$$\xymatrix{H^1(X,\widetilde{\SO}(n,\mathbb{C}))\ar[r]&H^1(X,\SO(n,\mathbb{C}))\ar[r]^-{c}
&H^2(X,\pi_1(\SO(n,\mathbb{C}))).}$$ This invariant
$$c\in H^2(X,\pi_1(\SO(n,\mathbb{C})))\cong\pi_1(\SO(n,\mathbb{C}))$$
measures the obstruction to lifting $(W,Q_W)$ to a flat
$\widetilde{\SO}(n,\mathbb{C}))$-bundle. Observe that when $n\geq
3$, the universal cover of $\SO(n,\mathbb{C})$ is
$\Spin(n,\mathbb{C})$.
We have that $$\pi_1(\SO(n,\mathbb{C}))=\left\{%
\begin{array}{ll}
    1, & \hbox{$n=1$,} \\
    \mathbb{Z}, & \hbox{$n=2$,} \\
    \mathbb{Z}/2, & \hbox{$n\geq 3$.} \\
\end{array}%
\right.$$ When $n\geq 3$, the invariant $c\in\mathbb{Z}/2$
corresponds to the second Stiefel-Whitney classe of the orthogonal
bundle that we obtain from the reduction of the structure group of
$(W,Q_W)$ from $\SO(n,\mathbb{C})$ to the real group $\SO(n)$.

Since $\det W=\mathcal{O}$, using the application
$$\xymatrix{H^1(X,\SO(n,\mathbb{C}))\ar[r]^-{\det}&J(X)}$$ in the Jacobian of $X$
and the identification
$$H^1(X,\mathbb{Z}_2)\cong J_2(X)=\{L\in J(X)\mid
L^2\cong\mathcal{O}\},$$ the first Stiefel-Whitney classes of the
bundle is zero.

We define the moduli space of polystable $\SO_0(1,n)$- Higgs
bundles with invariant $c$ as
$$\mathcal{M}_c(\SO_0(1,n))=\{(\mathcal{O},W,Q_W,\eta)\in\mathcal{M}(\SO_0(1,n))\text{
such that }c(W,Q_W)=c\}.$$

The invariant $c$ gives a first decomposition of the moduli space
$$\mathcal{M}(\SO_0(1,n))=\coprod_c\mathcal{M}_c(\SO_0(1,n)).$$ To
obtain the number of connected components it is necessary to
distinguish which of these components $\mathcal{M}_c(\SO_0(1,n))$
are connected and which decompose as a union of connected
components.

\section{Hitchin fuction}

To simplify, we denote $\mathcal{M}:=\mathcal{M}_c(\SO_0(1,n))$.
Morse-theoretic techniques for studying the topology of moduli
spaces of Higgs bundles were introduced by Hitchin \cite{Hi1,Hi2}.
In this section we describe briefly Hitchin's method and we begin
the study of our particular case.

The moduli space of equivalence classes of reductive
representations in a Lie group $G$ is homeomorphic to the moduli
space of polystable $G$-Higgs bundles. The proof of this result
involves the moduli space of solutions to the \emph{Hitchin's
equations}. It was proved by Hitchin \cite{Hi1} and by Simpson
\cite{S2} for a complex Lie group and by Bradlow, Garc\'ia-Prada,
Gothen and Mundet i Riera \cite{BG-PM,G-PGM} in the real case,
that $\mathcal{M}(G)$ is homeomorphic to the moduli space of
solutions to the Hitchin's equations, $\mathcal{M}^{Hit}(G)$,
which is defined as the space of pairs $(A,\varphi)$, where $A$ is
a connection on a smooth principal $H$-bundle $E_H$ and
$\varphi\in\Omega^{1,0}(E_H(\mathfrak{m}^\mathbb{C}))$, satisfying
$$\begin{array}{rll}
    F_A-[\varphi,\tau(\varphi)] &=& 0, \\
  \bar{\partial}_A(\varphi) &=& 0,
\end{array}$$ modulo gauge equivalence.

Using the homeomorphism
$\mathcal{M}^{Hit}_c(\SO_0(1,n))\cong\mathcal{M},$ the
\textbf{Hitchin function} is defined as the positive function
$$f:\mathcal{M}\rightarrow\mathbb{R},$$ given by
$$[A,\varphi]\mapsto\|\varphi\|^2=\int_X|\varphi|^2 d\vol,$$ where
$[\cdot,\cdot]$ denotes the equivalence class in the moduli space
$\mathcal{M}^{Hit}_c(\SO_0(1,n))$ and $|\cdot|$ is the harmonic
metric that gives the reduction to $\SO(1)\times\SO(n)$.
Equivalently, we can define the map over the moduli space of Higgs
pairs, for a fixed $(E,\varphi)\in\mathcal{M}$, by using the
$L^2$-norm $\|\cdot\|$ of the metric that solves the Hitchin's
equations.

\begin{proposition}\label{fproper}
The function $f([A,\varphi])=\|\varphi\|^2$ is a proper map.
\end{proposition}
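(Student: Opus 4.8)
The plan is to use the homeomorphism $\mathcal{M}\cong\mathcal{M}^{Hit}_c(\SO_0(1,n))$ and to prove properness by showing that the preimage $f^{-1}([0,C])$ of every bounded interval is compact; since $f\geq 0$ and $f$ is continuous, this suffices, because an arbitrary compact subset of $\mathbb{R}$ is contained in such an interval and $f^{-1}(\text{compact})$ is then a closed subset of a compact set. Represent a point of $f^{-1}([0,C])$ by a pair $(A,\varphi)$ solving Hitchin's equations $F_A-[\varphi,\tau(\varphi)]=0$, $\bar\partial_A\varphi=0$ on the fixed smooth $H$-bundle $E_H$ determined by the invariant $c$, with $\|\varphi\|^2\leq C$. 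Given a sequence $(A_i,\varphi_i)$ of such solutions, the goal is to produce gauge transformations $g_i$ so that, after passing to a subsequence, $(g_i\cdot A_i,g_i\cdot\varphi_i)$ converges in a Sobolev topology to a solution $(A_\infty,\varphi_\infty)$; then $[A_\infty,\varphi_\infty]\in f^{-1}([0,C])$, and continuity of the projection to $\mathcal{M}$ gives $[A_i,\varphi_i]\to[A_\infty,\varphi_\infty]$ in $\mathcal{M}$, so $f^{-1}([0,C])$ is sequentially compact.

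The first and crucial step is an a priori $C^0$ bound on the Higgs fields. Applying the Weitzenb\"ock formula for $\bar\partial_{A_i}$ on $E_H(\mathfrak{m}^\mathbb{C})\otimes K$ and substituting the two Hitchin equations, one obtains a differential inequality of the form $\Delta|\varphi_i|^2\leq a\,|\varphi_i|^2$ on $X$, where $a$ depends only on the metric of $X$ (through the curvature of $K$); the quadratic term $-|[\varphi_i,\tau(\varphi_i)]|^2$ produced by the first equation has the favourable sign and may be discarded. Thus $|\varphi_i|^2\geq 0$ is a subsolution of a uniformly elliptic equation with bounded coefficients on the compact surface $X$, and the local maximum principle for subsolutions gives $\sup_X|\varphi_i|^2\leq c(X)\int_X|\varphi_i|^2=c(X)\,\|\varphi_i\|^2\leq c(X)\,C$. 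In particular $F_{A_i}=[\varphi_i,\tau(\varphi_i)]$ is bounded in $L^\infty$, hence in every $L^p$.

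With a uniform $L^p$ bound on the curvatures, I would invoke Uhlenbeck's weak compactness theorem: after suitable gauge transformations $g_i$ and passing to a subsequence, $g_i\cdot A_i$ converges weakly in $L^p_1$ to a connection $A_\infty$. The second Hitchin equation, written as $\bar\partial\varphi_i=-[A_i^{0,1},\varphi_i]$, together with the $L^\infty$ bound on $\varphi_i$ and the $L^p_1$ bound on $A_i$, yields via elliptic estimates for $\bar\partial$ a uniform $L^p_1$ bound on $g_i\cdot\varphi_i$; a standard bootstrap then improves the regularity and the convergence (so that, by Rellich, $\varphi_i\to\varphi_\infty$ strongly in $L^2$ and hence $\|\varphi_\infty\|^2\leq C$), and the limit $(A_\infty,\varphi_\infty)$ again solves Hitchin's equations since these pass to the limit under this mode of convergence. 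This completes the compactness argument, and hence the proof; the scheme is Hitchin's original one \cite{Hi1,Hi2}, adapted to the real group as in \cite{G-PGM}.

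The main obstacle is the a priori estimate of the second step: one must verify that in the $\SO_0(1,n)$ setting (equivalently, for the associated $\SO(n+1,\mathbb{C})$-Higgs bundle) the Weitzenb\"ock computation genuinely produces $\Delta|\varphi|^2\leq a|\varphi|^2$ with the good sign on the quadratic term --- this is exactly the place where the specific form $F_A-[\varphi,\tau(\varphi)]=0$, rather than the opposite sign, of Hitchin's equations is used. Once this pointwise bound is in hand, the Uhlenbeck gauge fixing and the elliptic bootstrap are routine.
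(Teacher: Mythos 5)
Your argument is correct and is essentially the proof the paper relies on: the paper does not reprove properness but simply cites Hitchin's Proposition~7.1 of \cite{Hi1}, and your outline (maximum-principle $C^0$ bound on $\varphi$ from the Weitzenb\"ock identity and the equation $F_A=[\varphi,\tau(\varphi)]$, then Uhlenbeck weak compactness and elliptic bootstrap) is a faithful reconstruction of that argument, adapted to the real group as in \cite{G-PGM}. No further comparison is needed.
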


The proof of this result was given by Hitchin in \cite[Proposition
$7.1$]{Hi1}.


Even if $\mathcal{M}$ is not smooth, as in our case, the fact that
$f$ is a proper map gives information about the connected
components of $\mathcal{M}$.

\begin{proposition}
Let $\mathcal{M}'\subseteq\mathcal{M}$ be a closed subspace and
let $\mathcal{N}'\subseteq\mathcal{M}'$ be the subspace of local
minima of $f$ on $\mathcal{M}'$. If $\mathcal{N}'$ is connected,
then $\mathcal{M}'$ is connected.
\end{proposition}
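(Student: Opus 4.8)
The plan is to deduce this from the properness of $f$ (Proposition \ref{fproper}) by the standard Morse-theoretic argument of Hitchin. First I would observe that since $\mathcal{M}'$ is closed in $\mathcal{M}$ and $f\colon\mathcal{M}\to\mathbb{R}$ is proper, the restriction $f|_{\mathcal{M}'}$ is also proper; in particular it is bounded below (it is non-negative) and attains its infimum on every closed subset. The key consequence I want to extract is that every connected component of $\mathcal{M}'$ must contain at least one point of $\mathcal{N}'$: on a connected component $C$, which is itself closed in $\mathcal{M}'$, properness of $f|_C$ together with the fact that $f$ is proper and bounded below forces $f|_C$ to achieve its minimum value at some point $p\in C$, and such a $p$ is automatically a local minimum of $f$ on $\mathcal{M}'$, hence $p\in\mathcal{N}'\cap C$.

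Next I would argue by contradiction. Suppose $\mathcal{M}'$ is disconnected, so $\mathcal{M}'=A\sqcup B$ with $A,B$ non-empty, disjoint, open (and closed) in $\mathcal{M}'$. By the previous paragraph, $\mathcal{N}'\cap A\neq\emptyset$ and $\mathcal{N}'\cap B\neq\emptyset$. But then $\mathcal{N}' = (\mathcal{N}'\cap A)\sqcup(\mathcal{N}'\cap B)$ exhibits $\mathcal{N}'$ as a disjoint union of two non-empty relatively open subsets, contradicting the hypothesis that $\mathcal{N}'$ is connected. Therefore $\mathcal{M}'$ is connected.

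I should be slightly careful about one technical point, which is the only real subtlety here: I need that a connected component $C$ of $\mathcal{M}'$ is closed in $\mathcal{M}'$ (true in any topological space, since components are always closed) and that the infimum of $f$ over $C$ is actually attained. For the latter, since $f$ is proper and $f\geq 0$, the preimage $f^{-1}([0,R])$ is compact for every $R$; intersecting with the closed set $C$ gives a compact set, and for $R$ large enough this intersection is non-empty, so a continuous function on it attains its minimum, which by the nested structure of the sublevel sets is the infimum of $f$ over all of $C$. The resulting minimizer is a local (indeed global, on $C$) minimum of $f|_{\mathcal{M}'}$. This is the main —and essentially the only— point requiring attention; the rest is a formal connectedness argument. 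No smoothness of $\mathcal{M}$ is used anywhere, which is exactly why the statement is phrased for the possibly-singular $\mathcal{M}$.
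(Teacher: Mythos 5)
Your argument is correct and is essentially the paper's own: the paper offers only the one-line remark that the proper function $f$ attains a minimum on each connected component of $\mathcal{M}'$, so that the number of components of $\mathcal{M}'$ is bounded by the number of components of $\mathcal{N}'$, and your write-up just fills in this standard properness-plus-separation argument. The one small point to watch is that a global minimizer on a connected component $C$ is a \emph{local} minimum of $f$ on $\mathcal{M}'$ only if $C$ is open in $\mathcal{M}'$ (components are always closed but need not be open unless $\mathcal{M}'$ is locally connected), so it is cleaner to run the minimizing argument directly on the clopen sets $A$ and $B$ of your separation --- which your contradiction step already supplies --- rather than on components.
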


This result is in fact more general. The proper function $f$ has a
minimum on each connected component of $\mathcal{M}'$, and then
the number of connected components of $\mathcal{M}'$ is bounded by
the number of connected components of $\mathcal{N}'$. Thus, we are
interested in computing the critical points and more precisely the
local minima of $f$.$\newline$

To study the critical points of the Hitchin function we use the
following results (see \cite{Hi1}).

\begin{proposition}\label{hamiltonian}
The restriction of $f([A,\varphi])=\|\varphi\|^2$ to the smooth
locus $\mathcal{M}^s\in\mathcal{M}$ is a moment map for the
Hamiltonian circle action
$$[A,\varphi]\mapsto[A,e^{i\theta}\varphi].$$
\end{proposition}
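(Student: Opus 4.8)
The plan is to realize $f([A,\varphi])=\|\varphi\|^2$ as a moment map for the Hamiltonian action of $S^1$ on the smooth locus $\mathcal{M}^s$ and then identify the symplectic form with respect to which it is a moment map. First I would recall that the smooth locus $\mathcal{M}^s$ carries a natural K\"ahler structure: the tangent space at a smooth point $[A,\varphi]$ is identified (via Proposition \ref{hypercohomology} and the hyperk\"ahler quotient construction underlying the Hitchin equations) with $\mathbb{H}^1(C^\bullet)$, and the $L^2$-metric together with the complex structure coming from $X$ gives a K\"ahler form $\omega$. The circle acts by $[A,\varphi]\mapsto[A,e^{i\theta}\varphi]$, and since $A$ is fixed this action is well defined on $\mathcal{M}^s$ and preserves $\omega$; its infinitesimal generator at $[A,\varphi]$ is the tangent vector represented by $(0,i\varphi)$.

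Next I would carry out the moment map computation. Writing the symplectic form in the chart near $[A,\varphi]$ as the imaginary part of the $L^2$-Hermitian pairing on $\Omega^{1,0}(E_H(\mathfrak m^{\mathbb C}))\oplus\Omega^{0,1}(E_H(\mathfrak h))$ (the deformation variables $(\dot\varphi,\dot A)$), one pairs the generating vector field $(i\varphi,0)$ against an arbitrary variation $(\dot\varphi,\dot A)$ and checks that the resulting $1$-form equals $d\bigl(\tfrac12\|\varphi\|^2\bigr)$, up to a harmless positive constant absorbed into the normalization of $\omega$. Concretely, $d f$ evaluated on $(\dot\varphi,\dot A)$ is $2\,\real\langle\varphi,\dot\varphi\rangle_{L^2}$, which is exactly $\omega\bigl((i\varphi,0),(\dot\varphi,\dot A)\bigr)$ once the orthogonality of the $\varphi$-part and the $A$-part is used; this is the content of Hitchin's original calculation in \cite[Section 6]{Hi1}, which I would simply invoke rather than reproduce. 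Finally, one notes that the harmonic metric solving the Hitchin equations depends on $[A,\varphi]$ only through its gauge class, so the function and the circle action descend consistently to $\mathcal{M}^s$, making the identification coordinate-independent.

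The main obstacle is ensuring that everything is well posed on the possibly singular $\mathcal{M}$: the statement is only claimed on the smooth locus $\mathcal{M}^s$, so I would be careful to restrict all differential-geometric constructions there, where $\mathbb{H}^0=\mathbb{H}^2=0$ (Corollary \ref{H0yH2}) and the hyperk\"ahler quotient is genuinely smooth of the expected dimension. A secondary technical point is keeping track of the real structure: for $\SO_0(1,n)$-Higgs bundles $\mathfrak m^{\mathbb C}$ carries the reality condition $\nu=-\eta^\top$, so the $L^2$-pairing and the circle action must be checked to be compatible with this; but since the circle acts by scaling $\varphi$ and $\tau$ is conjugate-linear, the term $[\varphi,\tau(\varphi)]$ in the first Hitchin equation is $S^1$-invariant, so the solution locus and hence $\mathcal{M}^s$ are preserved. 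With these caveats the proof reduces to citing Hitchin's moment-map computation in our setting.
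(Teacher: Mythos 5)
Your proposal is correct and follows essentially the same route as the paper, which states this proposition without proof beyond the citation to Hitchin's original computation in \cite{Hi1}: one identifies the K\"ahler form on the smooth locus via the $L^2$-metric on the solution space of the Hitchin equations, notes that the generator of the circle action is the class of $(0,i\varphi)$, and checks $df=\iota_X\omega$ exactly as in Hitchin's argument. Your additional remarks on $S^1$-invariance of $[\varphi,\tau(\varphi)]$ and on restricting to the smooth locus where $\mathbb{H}^0=\mathbb{H}^2=0$ are accurate and consistent with the paper's setup.
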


\begin{proposition}\label{fixedpoints}
A smooth point of the moduli space $\mathcal{M}$ is a critical
point of $f$ if and only if it is a fixed point of the circle
action, and the subbundle $\nu^-(\mathcal{M}_l)$ where the Hessian
of the Hitchin function is negative definite equals the subbundle
of $\nu(\mathcal{M}_l)$ on which the circle acts with negative
weights.
\end{proposition}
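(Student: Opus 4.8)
*A smooth point of the moduli space $\mathcal{M}$ is a critical point of $f$ if and only if it is a fixed point of the circle action, and the subbundle $\nu^-(\mathcal{M}_l)$ where the Hessian of the Hitchin function is negative definite equals the subbundle of $\nu(\mathcal{M}_l)$ on which the circle acts with negative weights.*

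The plan is to exploit Proposition \ref{hamiltonian}, which identifies $f$ on the smooth locus $\mathcal{M}^s$ with a moment map for the Hamiltonian circle action $[A,\varphi]\mapsto[A,e^{i\theta}\varphi]$. First I would recall the general principle that a moment map for a circle (or torus) action on a symplectic manifold has critical points exactly at the fixed points of the action: at a point $x$, the differential $df_x$ is, by the moment map equation, the contraction $\iota_{X}\omega$ of the symplectic form with the vector field $X$ generating the action, so $df_x=0$ if and only if $X_x=0$, i.e.\ $x$ is fixed. This already gives the first ``if and only if'' of the statement — that a smooth critical point of $f$ is precisely a fixed point of the circle action. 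One should note that the circle action here descends to the moduli space since multiplying the Higgs field by a unit scalar preserves (poly)stability and the gauge equivalence classes, and it preserves the Kähler structure coming from the hyperkähler structure on $\mathcal{M}^s$; this is what makes Proposition \ref{hamiltonian} applicable.

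Next I would turn to the statement about the Hessian. At a fixed point lying on a critical submanifold $\mathcal{M}_l$, the circle acts linearly on the normal bundle $\nu(\mathcal{M}_l)$, decomposing it into weight spaces $\nu(\mathcal{M}_l)=\bigoplus_{k\neq 0}\nu_k$ indexed by the integer weights $k$ of the action (the zero-weight part is tangent to $\mathcal{M}_l$, by the smoothness/cleanness of the fixed-point set). The key computation, which is the classical local normal form for a moment map near a fixed point, is that in suitable local coordinates $f$ looks like $f = f(\mathcal{M}_l) + \tfrac12\sum_k k\,|z_k|^2$ on the weight space $\nu_k$, so the Hessian is $\mathrm{diag}(k)$ on the normal directions. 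Hence the negative eigenspace $\nu^-(\mathcal{M}_l)$ of the Hessian is exactly $\bigoplus_{k<0}\nu_k$, the sum of the negative-weight spaces. Concretely, in the Higgs bundle picture the weight-$k$ eigenspace is computed from the hypercohomology $\mathbb{H}^1$ of the deformation complex graded by the $S^1$-action induced by the variation of Hodge structure at the fixed point, so this step amounts to identifying the Hessian with the weight decomposition on $\mathbb{H}^1(C^\bullet)$.

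The main obstacle is the second part: making the identification of the Hessian with the circle-weight grading precise at a smooth point of a possibly singular moduli space. The cleanest route is to restrict to $\mathcal{M}^s$, where $\mathcal{M}$ is a (Kähler, in fact hyperkähler) manifold and the critical submanifolds $\mathcal{M}_l$ are smooth; there the argument is entirely standard Morse–Bott theory for moment maps, and one simply invokes the local normal form together with the fact that $f$ is a perfect Bott–Morse function in this setting, all of which is due to Hitchin \cite{Hi1} (and Frankel) — so rather than re-deriving the normal form I would cite \cite{Hi1} for the computation and give only the brief symplectic-geometry argument above for why the equivalences hold. The one genuinely non-routine point to be careful about is that the circle action need not be effective and the weights are honest integers, so ``negative weights'' is unambiguous; and that the equality of $\nu^-$ with the negative-weight subbundle is an equality of holomorphic subbundles of $\nu(\mathcal{M}_l)$, compatible with the complex structure, which follows because the $S^1$-action is holomorphic so its weight spaces are complex subbundles and the Hessian is the real part of a Hermitian form diagonalized by the same decomposition.
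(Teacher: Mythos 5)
Your argument is correct and is exactly the standard one: the paper itself gives no proof of this proposition, stating it as a known result with a reference to Hitchin \cite{Hi1}, and your sketch (critical points of a moment map are the zeros of the generating vector field, hence the fixed points; the local normal form $f=f(\mathcal{M}_l)+\tfrac12\sum_k k|z_k|^2$ on the weight decomposition of $\nu(\mathcal{M}_l)$ identifies $\nu^-$ with the negative-weight subbundle) is precisely the Frankel--Hitchin argument being cited. Your sign conventions are consistent with the paper's later use in Proposition \ref{localminima}, so nothing is missing.
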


Using Proposition \ref{fixedpoints}, the critical points of $f$
are of two types:

(1) The Higgs field $\varphi=0$.

(2) If $\varphi\neq 0$, $[A,\varphi]$ is a fixed point of the
circle action if and only if
$$[A,e^{i\theta}\varphi]=[A,\varphi]\text{, for all }e^{i\theta}\in S^1.$$
Then, there is a 1-parameter family of gauge transformations
$g(\theta)=(g_1(\theta),g_2(\theta))$ such that
\begin{equation}\label{equation}
    (A,e^{i\theta}\varphi)=g(\theta)\cdot(A,\varphi)=(g(\theta)\cdot
A,g(\theta)\cdot\varphi).
\end{equation}

If the family $\{g(\theta)=(g_1(\theta),g_2(\theta))\}$ is
generated by an infinitesimal gauge transformation
$\psi=(\psi_1,\psi_2)$, we have that
$$g(\theta)\cdot\varphi=\iota(g(\theta))(\varphi)
=\Ad(g(\theta))(\varphi)=\exp(\ad(\theta\psi))(\varphi),$$ and
taking $\frac{d}{d\theta}|_{\theta=0}$ in the second term of the
brackets in (\ref{equation}) we obtain
$$\frac{d}{d\theta}(e^{i\theta}\varphi)|_{\theta=0}=i\varphi,$$
and
$$\frac{d}{d\theta}(g(\theta)\cdot\varphi)|_{\theta=0}=
\frac{d}{d\theta}\exp(\ad(\theta\psi))(\varphi)|_{\theta=0}=\ad(\psi)(\varphi)=[\psi,\varphi].$$
Then $$[\psi,\varphi]=i\varphi.$$

Let $A=(A_1,A_2)$. Since $g_1(\theta)$ and $g_2(\theta)$ act on
$A_1$ and $A_2$ separately, we can consider $\psi_1$ and $\psi_2$
generating the action of $\{g_1(\theta)\}$ and $\{g_2(\theta)\}$.
The equation (\ref{equation}) gives the following condition for
the action on the connections $$g_i(\theta)\cdot
A_i=g_i(\theta)\circ A_i\circ g_i(\theta)^{-1}=A_i,$$ or
equivalently $$A_i\circ g_i(\theta)=g_i(\theta)\circ A_i,$$ that
is, the automorphism $g_i(\theta)$ is parallel with respect to the
connection $A_i$. Then we have $$d_{A_i}(\psi_i)=0.$$ That is, the
family $\{g(\theta)=(g_1(\theta),g_2(\theta))\}$ is generated by
an infinitesimal gauge transformation $\psi=(\psi_1,\psi_2)$ which
is covariantly constant, that is,
$$d_{A_1}(\psi_1)=d_{A_2}(\psi_2)=0$$ and with
$$[\psi,\varphi]=i\varphi.$$


\begin{proposition}\label{hodgebundles}
An $\SO_0(1,n)$-Higgs bundle
$(\mathcal{O},W,Q_W,\eta)\in\mathcal{M}$ with $\eta\neq 0$
represents a fixed point of the circle action if and only it is a
Hodge bundle (complex variation of Hodge structure), that is, if
and only if the vector bundles $W$ have a decomposition
$$W=\bigoplus_{r=-s}^s W_r,$$
with $W_r\cong (W^*)_{-r}$ and ${\psi_2}|_{W_r}=ir$ for an
infinitesimal gauge transformation $\psi_2$. The only piece of
Higgs field non-equal to zero is
$$\eta:W_{-1}\rightarrow\mathcal{O}\otimes K\text{ (and $\eta^\top:\mathcal{O}\rightarrow W_1\otimes K$).}$$
\end{proposition}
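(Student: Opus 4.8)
The plan is to follow the standard argument identifying fixed points of the Hitchin circle action with complex variations of Hodge structure, as carried out by Hitchin \cite{Hi1,Hi2}, adapted to the present group. From the preceding discussion we already know that a fixed point $(\mathcal{O},W,Q_W,\eta)$ with $\eta\neq 0$ is equivalent to the existence of a covariantly constant infinitesimal gauge transformation $\psi=(\psi_1,\psi_2)$ with $d_{A_1}\psi_1=d_{A_2}\psi_2=0$ and $[\psi,\varphi]=i\varphi$. First I would observe that, since $\mathcal{O}$ is the trivial line bundle and $\psi_1$ lies in $\mathfrak{so}(\mathcal{O})=0$, we have $\psi_1=0$; hence the bracket condition becomes a condition on $\psi_2$ alone acting on $\eta$. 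Next, $\psi_2\in H^0(\mathfrak{so}(W))$ is a holomorphic section (being covariantly constant for the metric connection it is in particular holomorphic), and because it is skew with respect to $Q_W$ its eigenvalues come in pairs $\pm i r$; moreover, being covariantly constant with respect to a unitary connection forces the eigenvalues to be purely imaginary and \emph{constant}, so $\psi_2$ gives a holomorphic, $Q_W$-orthogonal eigenbundle decomposition $W=\bigoplus_{r} W_r$ with $\psi_2|_{W_r}=ir$.

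The key step is then to pin down which $r$ occur and where $\eta$ lives. The skew-symmetry of $\psi_2$ with respect to $Q_W$ gives $Q_W(W_r,W_{r'})=0$ unless $r+r'=0$, so $Q_W$ identifies $W_r\cong (W_{-r})^*$, i.e.\ $W_r\cong(W^*)_{-r}$ as claimed. To normalize the labelling so the weights are integers (or, in a suitable convention, so that the relevant weights differ by one), I would use the equation $[\psi,\varphi]=i\varphi$ in its vector-bundle incarnation: writing $\eta\colon W\to\mathcal{O}\otimes K$ and decomposing $\eta=\sum_r \eta_r$ with $\eta_r=\eta|_{W_r}$, the bracket $[\psi,\varphi]$ acts on the $\mathfrak{m}^{\mathbb{C}}$-component $(\eta,-\eta^\top)$; since $\psi_1=0$ on the $\mathcal{O}$-factor and $\psi_2=ir$ on $W_r$, one computes that $[\psi,\varphi]$ restricted to the $\eta_r$-piece equals $(0-ir)\eta_r=-ir\,\eta_r$, and matching with $i\eta_r$ forces $\eta_r=0$ unless $-ir=i$, i.e.\ $r=-1$. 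Thus the only nonzero component of the Higgs field is $\eta\colon W_{-1}\to\mathcal{O}\otimes K$, and dually $\eta^\top\colon\mathcal{O}\to W_1\otimes K$ via $Q_W$ (consistently, $\mathcal{O}$ sits in weight $0$ and $\psi_1=0$). This also shows the weights can be taken to be the integers $-s\le r\le s$. Conversely, given such a Hodge-bundle datum, the gauge transformations $g_2(\theta)=\exp(\theta\psi_2)$ act on $W_r$ by $e^{ir\theta}$ and on $\mathcal{O}$ trivially, so they scale the sole nonzero Higgs component $\eta\colon W_{-1}\to\mathcal{O}\otimes K$ by $e^{-i\theta}$; absorbing a global shift (or replacing $\psi_2$ by $\psi_2+\mathrm{const}$, which changes nothing since the adjoint action only sees differences of weights) one gets $g(\theta)\cdot\varphi=e^{i\theta}\varphi$, exhibiting the fixed point.

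The main obstacle I anticipate is the careful bookkeeping in the normalization of weights: one must argue that, although $\psi_2$ a priori has arbitrary real eigenvalues, the integrality of the circle action (the action is by $S^1$, not $\mathbb{R}$) together with the equivariance relation forces the eigenvalues into a set of integers spaced so that $\eta$ shifts weight by exactly $-1$; equivalently, one must check that only \emph{one} weight string is linked by $\eta$ and that the identification $W_r\cong(W^*)_{-r}$ is compatible with $Q_W$ restricting non-degenerately to each pairing $W_r\oplus W_{-r}$. A secondary technical point is justifying that a covariantly constant (for the harmonic metric connection) infinitesimal automorphism is holomorphic and has constant eigenvalues, so that the $W_r$ are genuine holomorphic subbundles giving a $C^\infty$- and holomorphic splitting; this is standard but should be stated. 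Everything else is a direct translation of Hitchin's fixed-point analysis, using the explicit form of $\mathfrak{m}^{\mathbb{C}}$ and $\iota$ for $\SO_0(1,n)$ recorded in Section 1 and the identification of $\SO_0(1,n)$-Higgs bundles with tuples $(\mathcal{O},W,Q_W,\eta)$.
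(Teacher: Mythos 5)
Your proposal is correct and follows essentially the same route as the paper: the covariantly constant infinitesimal gauge transformation $\psi_2$ yields the eigenbundle decomposition $W=\bigoplus_r W_r$, skew-symmetry with respect to $Q_W$ gives both the isomorphism $W_r\cong (W^*)_{-r}$ and the vanishing $Q_W(W_r,W_l)=0$ unless $l=-r$, and the relation $[\psi,\varphi]=i\varphi$, read as $-\eta\psi_2=i\eta$, forces the Higgs field to be supported on $W_{-1}$. The only differences are cosmetic: you spell out the converse direction and the normalization of weights (which the paper leaves implicit), and your sign in the converse ($e^{-i\theta}$ versus $e^{i\theta}$) is just a convention choice for the isotropy action that does not affect the argument.
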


\begin{proof}
If $(\mathcal{O},W,Q_W,\eta)$ represents a smooth point of the
moduli space which is a critical point of $f$, then is is a fixed
point of the circle action. The condition $d_{A_2}(\psi_2)=0$ in
the context of Higgs bundles means that the infinitesimal gauge
transformation $\psi_2$ gives a decomposition
$$W=\bigoplus_r W_r,$$ where $\underline{r\in\mathbb{R}}$
and ${\psi_2}|_{W_r}=ir$. Moreover, since $\psi_2$ is locally in
$\mathfrak{so}(n)$, it satisfies $\psi_2=-\psi_2^\top$. If
$q_W:W\cong W^*$ is the isomorphism given by the orthogonal form
$Q_W$, we have $\psi_2^\top=q_W^{-1}\circ\psi_2^t\circ q_W$, and
for all $w\in W_r$ we have
$$\psi_2^t(q_W(w))=q_W(\psi_2^\top(w))=-q_W(\psi_2(w))=-irq_W(w),$$
that is,
$$w\in W_r\Leftrightarrow q_W(w)\in(W^*)_{-r}.$$ Hence, we have
an isomorphism $W_r\cong (W^*)_{-r}$.

If $w\in W_r$ and $w'\in W_l$,
$$Q_W(\psi_2(w),w')=Q_W(irw,w')=irQ_W(w,w')$$ and, on the other
hand,
$$Q_W(\psi_2(w),w')=Q_W(w,\psi_2^\top(w'))=Q_W(w,-\psi_2(w'))=Q_W(w,-ilw')=-ilQ_W(w,w'),$$
that is, $$i(r+l)Q_W(w,w')=0.$$ Then, all the $W_l$ are orthogonal
to $V_r$ (including $l=r$) under $Q_W$ except $l=-r$. Since $Q_W$
is non-degenerate,
$$Q_W(w,w')=0\text{ for all }w'\in W\Rightarrow w=0,$$ and then, given $0\neq w\in
W_r$, there is a $w'\in W$ with $Q_W(w,w')\neq 0$, that is, a
$w'\in W_{-r}$. Then
$$W=\bigoplus_{r=-s}^s W_r.$$

We also know that the endomorphism $\psi_2$ is trace free, then
$$0=\Tr(\psi_2)=i\sum_{r=-s}^s
r\rk(W_r)\Leftrightarrow\sum_{r=-s}^s r\rk(W_r)=0.$$

The condition $[\psi,\varphi]=i\varphi$ for the solution
$(A,\varphi)$ is equivalent in this context to
$$-\eta\psi_2=i\eta.$$ If $w\in W_r$, we have
$$-\eta(\psi_2(w))=-\eta(irw)=-ir\eta(w)=i\eta(w)\Leftrightarrow r={-1}\text{ }(\eta\neq 0),$$
and we conclude.
\end{proof}

From Theorem \ref{hodgebundles} together with Proposition
\ref{fixedpoints} we have that if $(\mathcal{O},W,Q_W,\eta)$ is an
$\SO_0(1,n)$-Higgs bundle which represents a smooth point of the
moduli space, it is a critical point of the Hitchin function if
and only if it is a Hodge bundle, but observe that not every Hodge
bundle represents a smooth point.

\section{Smooth minima}

In this section we study the smooth minima of the Hitchin function
in the moduli space of $\SO_0(1,n)$-Higgs bundles.

Let $(E,\varphi)$ be an $\SO_0(1,n)$-Higgs bundle and let
$(E_{\SO(n+1,\mathbb{C})},\varphi)$ be the associated
$\SO(n+1,\mathbb{C})$-Higgs bundle. Consider also the tuple
$(\mathcal{O},W,Q_W,\eta)$ corresponding to $(E,\varphi)$ and the
triple $(\textbf{E},Q,\varphi)$ corresponding to
$(E_{\SO(n+1,\mathbb{C})},\varphi)$. We have that
\begin{eqnarray*}
E_{\SO(n+1,\mathbb{C})}(\mathfrak{so}(n+1,\mathbb{C}))&=&\{f\in\End(\textbf{E})\mid
f+f^\top=0\}=\mathfrak{so}(\textbf{E}),\\
  E(\mathfrak{h}^\mathbb{C})&=&\{\left(%
\begin{array}{cc}
  0 & 0 \\
  0 & f_4 \\
\end{array}%
\right)\in\End(\textbf{E})\mid f_4+f_4^\top=0\}\\
   && \cong\mathfrak{so}(\mathcal{O})\oplus\mathfrak{so}(W)\subset\End(\mathcal{O})\oplus\End(W),\\
E(\mathfrak{m}^\mathbb{C})&=&\{\left(%
\begin{array}{cc}
  0 & f_2 \\
  -f_2^\top & 0 \\
\end{array}%
\right)\in\End(\textbf{E})\}\cong\Hom(W,\mathcal{O}).
\end{eqnarray*} In fact,
$$E_{\SO(n+1,\mathbb{C})}(\mathfrak{so}(n+1,\mathbb{C}))=E(\mathfrak{h}^\mathbb{C})\oplus
E(\mathfrak{m}^\mathbb{C}),$$ which is induced by the Cartan
decomposition of the Lie algebra $\mathfrak{so}(n+1,\mathbb{C})$.

If $(\mathcal{O},W,Q_W,\eta)$ is a Hodge bundle, from Proposition
\ref{hodgebundles} we have that there is an infinite\-si\-mal
gauge transformation $\psi_2$ such that
$$W=\bigoplus_{r=-s}^s W_r,$$
with $W_r\cong (W^*)_{-r}$, ${\psi_2}|_{W_r}=ir$ and
$$\eta:W_{-1}\rightarrow\mathcal{O}\otimes K.$$
This decompositions of $W$ gives decompositions
$$\begin{array}{rcl}
\End(W)&=&\displaystyle{\bigoplus_{k=-2s}^{2s}(\bigoplus_{i-j=k}\Hom(W_j,W_i)),}\\
\Hom(W,\mathcal{O})&=&\displaystyle{\bigoplus_{k=-s}^{s}\Hom(W_k,\mathcal{O}).}
\end{array}$$

If $g_{k,l}\in\Hom(W_k,W_l)$, using the isomorphism $q_W$ induced
by the orthogonal form $Q_W$ we have that the diagram
$$\xymatrix{W_l^*\ar[r]^{g_{k,l}^t}\ar[d]^\cong&W_k^*\ar[d]^\cong\\
W_{-l}\ar[r]^{g_{k,l}^\top}&W_{-k},}$$ is commutative, and then,
the skew-symmetry in $\mathfrak{so}(W)\subset\End(W)$ is
equivalent to the condition $g_{-l,-k}+g_{k,l}^\top=0$, that is,
the following sets are related by skew-symmetry
$$\begin{array}{rcl}
  g_{k,l} & \longleftrightarrow & -g_{k,l}^\top,\\
  \Hom(W_k,W_l) & \longleftrightarrow & \Hom(W_{-l},W_{-k}).
\end{array}$$ Observe that when $k=l$, the endomorphism
and $g_{k,l}$ is skew-symmetric. $\text{Analogously}$, in
$E(\mathfrak{m}^\mathbb{C})$ we have the relation:
$$\begin{array}{rcl}
  h_k & \longleftrightarrow & -h_k^\top,\\
  \Hom(W_k,\mathcal{O}) & \longleftrightarrow & \Hom(\mathcal{O},W_{-k}).
\end{array}$$

Then, the decomposition of $W$ also induce decompositions of
$E(\mathfrak{h}^\mathbb{C})\cong\mathfrak{so}(\mathcal{O})\oplus\mathfrak{so}(W)\cong\mathfrak{so}(W)$
and $E(\mathfrak{m}^\mathbb{C})\cong\Hom(W,\mathcal{O})$, which
gives a decomposition of the deformation complex of Section
\ref{deformationsection}:
$$C^\bullet(\mathcal{O},W,Q_W,\eta):\mathfrak{so}(W)\rightarrow \Hom(W,\mathcal{O})\otimes K,$$
given by
$$C^\bullet(\mathcal{O},W,Q_W,\eta)=
\bigoplus_k C_k^\bullet(\mathcal{O},W,Q_W,\eta),$$ where
$C_k^\bullet(\mathcal{O},W,Q_W,\eta)$ are the subcomplexes
$$C_k^\bullet(\mathcal{O},W,Q_W,\eta):\mathfrak{so}(W)_k\rightarrow \Hom(W,\mathcal{O})_{k+1}\otimes
K.$$ This induces a decomposition of the infinitesimal deformation
space given by
$$\mathbb{H}^1(C^\bullet(\mathcal{O},W,Q_W,\eta))=\bigoplus_k\mathbb{H}^1(C^\bullet_k(\mathcal{O},W,Q_W,\eta)).$$

A convenient reference for the following results is
Garc\'ia-Prada, Gothen and Mundet i Riera \cite{G-PGM}.

\begin{proposition}\label{localminima}
Let $(\mathcal{O},W,Q_W,\eta)$ be an $\SO_0(1,n)$-Higgs bundle
which represents a smooth point of the moduli space $\mathcal{M}$
and which is a critical point of $f$. The hyperco\-ho\-mo\-lo\-gy
group $\mathbb{H}^1(C^\bullet_k(\mathcal{O},W,Q_W,\eta))$ is
isomorphic to the eigenspace of the Hessian of $f$ with eigenvalue
$-k$. Then, $(\mathcal{O},W,Q_W,\eta)$ corresponds to a local
minimum of $f$ if and only if
$$\mathbb{H}^1(C_k^\bullet(\mathcal{O},W,Q_W,\eta))=0\text{ for }k>0.$$
\end{proposition}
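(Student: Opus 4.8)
The statement is the standard Hitchin–Morse identification of the negative eigenspaces of the Hessian of $f$ at a critical point, specialized to $\SO_0(1,n)$-Higgs bundles. The strategy follows Hitchin's original argument as streamlined in \cite{Hi1} and \cite[Sec.~4]{G-PGM}. First I would recall that, at a smooth fixed point of the circle action, the moment-map picture of Proposition \ref{hamiltonian} makes $f$ a (real) Hamiltonian for the $S^1$-action, so its Hessian is computed from the weights of the linearized action on the tangent space $T_{[E,\varphi]}\mathcal{M}\cong\mathbb{H}^1(C^\bullet(\mathcal{O},W,Q_W,\eta))$. By Proposition \ref{fixedpoints}, the negative eigenspace of the Hessian coincides with the sum of the negative-weight subspaces; more precisely, the eigenvalue of the Hessian on a weight-$k$ subspace is a negative multiple of $k$, and after normalizing the $S^1$-action (generated infinitesimally by $\psi$ with $[\psi,\varphi]=i\varphi$) the eigenvalue is exactly $-k$.

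\textbf{Key steps.} The main computation is to show that the weight-$k$ part of $\mathbb{H}^1(C^\bullet)$ is precisely $\mathbb{H}^1(C_k^\bullet)$, where $C_k^\bullet$ is the subcomplex already introduced in the paragraphs preceding the statement: $\mathfrak{so}(W)_k\to\Hom(W,\mathcal{O})_{k+1}\otimes K$. For this I would (i) note that the infinitesimal gauge transformation $\psi=(0,\psi_2)$ acts on $E(\mathfrak{h}^{\mathbb{C}})\cong\mathfrak{so}(W)$ by $\ad(\psi)$ and on $E(\mathfrak{m}^{\mathbb{C}})\otimes K\cong\Hom(W,\mathcal{O})\otimes K$ by $-\bullet\circ\psi_2$, and that on the piece $\Hom(W_j,W_i)$ it has weight $i-j$ while on $\Hom(W_k,\mathcal{O})$ it has weight $-k$ — hence, keeping track of the twist by $K$ and the shift built into the differential $g\mapsto\eta g$, the differential of $C^\bullet$ maps the weight-$k$ piece $\mathfrak{so}(W)_k$ to the weight-$k$ piece $\Hom(W,\mathcal{O})_{k+1}\otimes K$ exactly as asserted; (ii) conclude that $C^\bullet=\bigoplus_k C_k^\bullet$ as complexes and hence $\mathbb{H}^1(C^\bullet)=\bigoplus_k\mathbb{H}^1(C_k^\bullet)$, with $\mathbb{H}^1(C_k^\bullet)$ the weight-$(-k)$ (equivalently eigenvalue-$(-k)$ under the Hessian, after the sign normalization fixed by $[\psi,\varphi]=i\varphi$) subspace; (iii) observe that since $(\mathcal{O},W,Q_W,\eta)$ is a smooth point, Corollary \ref{H0yH2} gives $\mathbb{H}^0=\mathbb{H}^2=0$, so there are no subtleties about the eigenspace decomposition failing to be the full tangent space, and the Hessian is nondegenerate in the directions transverse to the critical submanifold. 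Putting these together: the direct sum of the strictly-negative eigenspaces is $\bigoplus_{k>0}\mathbb{H}^1(C_k^\bullet)$, so $[E,\varphi]$ is a local minimum of $f$ if and only if this vanishes, i.e. $\mathbb{H}^1(C_k^\bullet(\mathcal{O},W,Q_W,\eta))=0$ for all $k>0$.

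\textbf{Main obstacle.} The technical heart is the bookkeeping in step (i)–(ii): one must verify carefully that the $K$-twist and the degree shift in the differential conspire so that $d$ is weight-preserving once the grading on the target is indexed as $\Hom(W,\mathcal{O})_{k+1}$, and that the weight of the Hessian on $\mathbb{H}^1(C_k^\bullet)$ is genuinely $-k$ and not merely proportional to $-k$ with an undetermined constant — this last point is exactly where the normalization $[\psi,\varphi]=i\varphi$, derived in the preceding section, is used, and where one invokes Hitchin's identification \cite[Sec.~4]{Hi1} of $f$ as a perfect Morse–Bott function whose indices are read off from these weights. The orthogonality relations $W_r\cong(W^*)_{-r}$ and the skew-symmetry constraints $g_{-l,-k}+g_{k,l}^\top=0$ recorded before the statement ensure the decomposition is compatible with the $\mathfrak{so}$-structure, so no extra reality conditions are lost; this makes the argument essentially identical to the general real-group case treated in \cite{G-PGM}, and I would simply indicate the necessary modifications rather than reproduce the full computation.
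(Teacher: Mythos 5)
Your proposal is correct and follows exactly the standard Hitchin--Morse argument that the paper itself relies on: the paper states this proposition without proof, citing \cite{G-PGM} (and ultimately \cite{Hi1}), and your reconstruction --- identifying the tangent space with $\mathbb{H}^1(C^\bullet)$, decomposing it into the weight subspaces $\mathbb{H}^1(C^\bullet_k)$ of the circle action generated by $\psi$ with $[\psi,\varphi]=i\varphi$, reading off the Hessian eigenvalue $-k$ from the moment-map property, and using Corollary \ref{H0yH2} to ensure the decomposition exhausts the tangent directions --- is precisely that argument. No gaps.
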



To give a criterion for deciding when the hypercohomology
$\mathbb{H}(C^\bullet_k(E,\varphi))$ vanishes, we use the Euler
characteristic of the complex
$C^\bullet_k(\mathcal{O},W,Q_W,\eta)$. If we denoted by
$\text{h}^i(\mathcal{O},W,Q_W,\eta)$ the dimension of the
hypercohomology group
$\mathbb{H}^i(C_k^\bullet(\mathcal{O},W,Q_W,\eta))$, the Euler
cha\-rac\-te\-ris\-tic is defined by
$$\chi(C^\bullet_k(\mathcal{O},W,Q_W,\eta))=\text{h}^0(C^\bullet_k(\mathcal{O},W,Q_W,\eta))-
\text{h}^1(C^\bullet_k(\mathcal{O},W,Q_W,\eta))+\text{h}^2(C^\bullet_k(\mathcal{O},W,Q_W,\eta)).$$

\begin{proposition}
Let $(\mathcal{O},W,Q_W,\eta)$ be an $\SO_0(1,n)$-Higgs bundle
which represents a fixed point under the circle action on
$\mathcal{M}$. Then
$$\chi(C^\bullet_k(\mathcal{O},W,Q_W,\eta))\leq 0,$$ and equality holds if and
only if the map
$$C_k^\bullet(\mathcal{O},W,Q_W,\eta):\mathfrak{so}(W)_k\rightarrow \Hom(W,\mathcal{O})_{k+1}\otimes
K$$ is an isomorphism.
\end{proposition}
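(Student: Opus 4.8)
The plan is to compute $\chi(C^\bullet_k(\mathcal{O},W,Q_W,\eta))$ directly from the Riemann--Roch theorem and then show that the sign is controlled by the weights of the circle action. First I would recall that the complex $C^\bullet_k$ is the two-term complex
$$ \mathfrak{so}(W)_k \xrightarrow{\ \ad(\varphi)\ } \Hom(W,\mathcal{O})_{k+1}\otimes K,$$
so by definition of the hypercohomology Euler characteristic,
$$\chi(C^\bullet_k) = \chi(\mathfrak{so}(W)_k) - \chi(\Hom(W,\mathcal{O})_{k+1}\otimes K).$$
Applying Riemann--Roch to each term (using $\deg\mathfrak{so}(W)_k = 0$, which follows from the self-duality $g_{k,l}\leftrightarrow -g_{k,l}^\top$ pairing $\Hom(W_j,W_i)$ with $\Hom(W_{-i},W_{-j})$, so the bundle is self-dual up to isomorphism and has degree zero, and similarly controlling $\deg(\Hom(W,\mathcal{O})_{k+1}\otimes K)$ via the Hodge decomposition together with the fact that only $\eta\colon W_{-1}\to\mathcal{O}\otimes K$ is non-zero), I would get an expression for $\chi(C^\bullet_k)$ purely in terms of ranks and $g$. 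The key structural input is that for a fixed point of the circle action the map $\ad(\varphi)$ is \emph{strict} with respect to the weight grading: it raises the $\psi_2$-weight by exactly one, so $C^\bullet_k$ genuinely is a subcomplex and the grading is compatible with $\ad(\varphi)$.

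Next I would prove the inequality $\chi(C^\bullet_k)\leq 0$. The cleanest route is to show that $\mathbb{H}^0(C^\bullet_k)=0$, i.e.\ that the first map $\ad(\varphi)\colon H^0(\mathfrak{so}(W)_k)\to H^0(\Hom(W,\mathcal{O})_{k+1}\otimes K)$ is injective, for $k\neq 0$; this is essentially the infinitesimal simplicity already available from stability (Corollary \ref{H0yH2} and the remarks following Proposition \ref{smoothness}), restricted to the positive-weight part. Given $\mathbb{H}^0(C^\bullet_k)=0$, the Euler characteristic reduces to
$$\chi(C^\bullet_k) = -\text{h}^1(C^\bullet_k) + \text{h}^2(C^\bullet_k),$$
and by Serre duality for the complex $C^\bullet_k$ — whose dual complex is $C^\bullet_{-k}$ up to a twist by $K$, using the skew-symmetry identifications $\mathfrak{so}(W)_k\cong\mathfrak{so}(W)_{-k}^*$ and $\Hom(W,\mathcal{O})_{k+1}\cong\Hom(\mathcal{O},W)_{-(k+1)}^*$ displayed just before the statement — one identifies $\text{h}^2(C^\bullet_k)$ with $\text{h}^0$ of a complex of the opposite weight, which again vanishes by the same simplicity argument. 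Hence $\chi(C^\bullet_k)=-\text{h}^1(C^\bullet_k)\leq 0$.

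Finally, for the equality case: $\chi(C^\bullet_k)=0$ forces $\text{h}^1(C^\bullet_k)=0$ as well, so $\mathbb{H}^i(C^\bullet_k)=0$ for all $i$, i.e.\ the complex is exact as a complex of sheaves. A two-term complex of locally free sheaves on a curve that is exact on hypercohomology in all degrees, with injective first map on global sections and no higher obstructions, is exact at the sheaf level precisely when the map $\mathfrak{so}(W)_k\to\Hom(W,\mathcal{O})_{k+1}\otimes K$ is an isomorphism of vector bundles: the long exact sequence relating $\mathbb{H}^\bullet(C^\bullet_k)$ to $H^\bullet(\ker)$, $H^\bullet(\mathrm{coker})$ shows the kernel and cokernel sheaves have vanishing cohomology in all degrees, and on a curve a locally free sheaf (a subsheaf/quotient of locally free, hence torsion-free, hence locally free) with $H^0=H^1=0$ and the degree constraint from $\deg\mathfrak{so}(W)_k = \deg(\Hom(W,\mathcal{O})_{k+1}\otimes K)$ forced by $\chi=0$ must be zero. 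Conversely if the map is an isomorphism then trivially $\mathbb{H}^\bullet(C^\bullet_k)=0$ and $\chi=0$. The main obstacle I anticipate is the bookkeeping of the degrees: correctly pinning down $\deg\bigl(\Hom(W,\mathcal{O})_{k+1}\otimes K\bigr)$ from the Hodge structure and verifying it equals $\deg\mathfrak{so}(W)_k$ exactly when $\chi=0$, rather than merely $\leq$; everything else is a formal consequence of Riemann--Roch, Serre duality for complexes, and the infinitesimal simplicity already in hand.
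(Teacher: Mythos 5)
The paper itself gives no proof of this proposition: it is quoted from \cite{G-PGM}, where the argument is Riemann--Roch combined with stability-based estimates on the kernel and cokernel of $a=\ad(\varphi)\colon\mathfrak{so}(W)_k\to\Hom(W,\mathcal{O})_{k+1}\otimes K$. Your route is genuinely different, and it has two gaps. The first is that the vanishing $\mathbb{H}^0(C^\bullet_k)=\mathbb{H}^2(C^\bullet_k)=0$ on which your inequality rests is only available at stable and simple points: Corollary \ref{H0yH2} assumes stability and smoothness, and the Serre-duality step implicitly needs stability of the associated $\SO(n+1,\mathbb{C})$-Higgs bundle via Proposition \ref{stabilitycorrespondence}. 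The proposition, however, is asserted for an \emph{arbitrary} fixed point of the circle action on $\mathcal{M}$, i.e.\ for polystable, possibly non-stable or non-simple Higgs bundles --- precisely the points needed later for the analysis of minima in the whole moduli space. The paper deliberately records the vanishing of $\mathbb{H}^0$ and $\mathbb{H}^2$ as a \emph{consequence} stated after the proposition, valid only at smooth points; it is not an ingredient of it. A smaller inaccuracy in the same step: the Serre dual of $C^\bullet_k$ is a complex of the shape $\Hom(\mathcal{O},W)_{-(k+1)}\to\mathfrak{so}(W)_{-k}\otimes K$ (the ``$\mathfrak{m}^\mathbb{C}\to\mathfrak{h}^\mathbb{C}$'' direction), not $C^\bullet_{-k}$, so $\text{h}^2(C^\bullet_k)$ is not an $\text{h}^0$ of another weight piece of the same complex.

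The second and more serious gap is the equality case. Vanishing of all hypercohomology of a two-term complex of locally free sheaves on a curve does \emph{not} force the map to be an isomorphism: take a line bundle $L$ of degree $g-1$ with $H^0(L)=H^1(L)=0$ and the zero map $L\to L$; all hypercohomology groups vanish and the degrees of source and target agree, yet the map is not an isomorphism. Your appeal to ``the degree constraint'' cannot close this, since kernel and cokernel may both be nonzero with cancelling contributions. The correct argument writes
$$\chi(C^\bullet_k)=(1-g)\bigl(\rk\mathfrak{so}(W)_k+\rk\Hom(W,\mathcal{O})_{k+1}\bigr)+\deg\mathfrak{so}(W)_k-\deg\bigl(\Hom(W,\mathcal{O})_{k+1}\otimes K\bigr)$$
and then runs the exact sequences through $\ker a$ and $\operatorname{im}a$, using the (semi)stability of the Higgs bundle to bound $\deg(\ker a)$ and $\deg(\operatorname{coker}a)$ --- these sheaves generate $\varphi$-invariant sub- and quotient objects. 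This yields $\chi(C^\bullet_k)\le(1-g)(\rk\mathfrak{so}(W)_k+\rk\Hom(W,\mathcal{O})_{k+1}-2\rk a)$ plus nonpositive degree terms, and equality forces $a$ to be injective with torsion-free cokernel of degree zero, hence an isomorphism. Stability is therefore used exactly where your proposal omits it: in the equality analysis, not merely in killing $\mathbb{H}^0$ and $\mathbb{H}^2$.
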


If $(\mathcal{O},W,Q_W,\eta)$ represents a smooth
$\SO_0(1,n)$-Higgs bundle, using Corollary \ref{H0yH2}, we have
that
$$\mathbb{H}^0(C^\bullet_k(\mathcal{O},W,Q_W,\eta))=\mathbb{H}^2(C^\bullet_k(\mathcal{O},W,Q_W,\eta))=0,$$ and
then,
$$-\chi(C^\bullet_k(\mathcal{O},W,Q_W,\eta))=\text{h}^1(C^\bullet_k(\mathcal{O},W,Q_W,\eta)),$$
for all $k$. Applying Proposition \ref{localminima}, we have the
following criterion for local minima of $f$.

\begin{proposition}\label{criteriumofminima}
Let $(\mathcal{O},W,Q_W,\eta)$ be an $\SO_0(1,n)$-Higgs bundle
which represents a smooth point of $\mathcal{M}$ and which is a
critical point of $f$. Then it represents a local minimum if and
only if
$$C_k^\bullet(\mathcal{O},W,Q_W,\eta):\mathfrak{so}(W)_k\rightarrow \Hom(W,\mathcal{O})_{k+1}\otimes
K$$ is an isomorphism for all $k>0$.
\end{proposition}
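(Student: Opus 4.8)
The plan is to deduce the statement by chaining together Proposition~\ref{localminima}, the preceding Euler-characteristic bound, and Corollary~\ref{H0yH2}; the real content is already contained in those results, so the argument is essentially bookkeeping with the $S^1$-weight decomposition $C^\bullet(\mathcal{O},W,Q_W,\eta)=\bigoplus_k C_k^\bullet(\mathcal{O},W,Q_W,\eta)$ of the deformation complex.

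First I would record the vanishing at a smooth point. Since $(\mathcal{O},W,Q_W,\eta)$ represents a smooth point of $\mathcal{M}$ it is in particular stable, so Corollary~\ref{H0yH2} gives $\mathbb{H}^0(C^\bullet(\mathcal{O},W,Q_W,\eta))=\mathbb{H}^2(C^\bullet(\mathcal{O},W,Q_W,\eta))=0$; as each $C_k^\bullet$ is a direct summand of $C^\bullet$ this forces $\mathbb{H}^0(C_k^\bullet)=\mathbb{H}^2(C_k^\bullet)=0$ for every $k$, and hence $\text{h}^1(C_k^\bullet)=-\chi(C_k^\bullet)$ for all $k$.

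Next I would feed this into Proposition~\ref{localminima}: the Higgs bundle represents a local minimum of $f$ precisely when $\mathbb{H}^1(C_k^\bullet)=0$ for all $k>0$, which by the identity above is equivalent to $\chi(C_k^\bullet)=0$ for all $k>0$. Finally, by the preceding proposition one has $\chi(C_k^\bullet)\le 0$ always, with equality if and only if the two-term complex $C_k^\bullet:\mathfrak{so}(W)_k\rightarrow\Hom(W,\mathcal{O})_{k+1}\otimes K$ is an isomorphism; indeed, once $\mathbb{H}^0$ and $\mathbb{H}^2$ vanish, the hypercohomology long exact sequence of a two-term complex shows that $\mathbb{H}^1=0$ is equivalent to the single differential being an isomorphism. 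Chaining the equivalences gives exactly the claimed criterion.

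I do not expect a genuine obstacle here: the substantive work --- identifying $\mathbb{H}^1(C_k^\bullet)$ with the eigenspace of the Hessian of $f$ of eigenvalue $-k$ on the smooth locus, and pinning down the sign of the Euler characteristic of a weight-graded piece of the deformation complex at a fixed point of the circle action --- is assumed from the cited results. The only mild care needed is to confirm that the grading by $S^1$-weight is compatible with taking hypercohomology term by term, so that the decomposition $\mathbb{H}^1(C^\bullet)=\bigoplus_k\mathbb{H}^1(C_k^\bullet)$ and the corresponding splitting of $\chi$ are legitimate.
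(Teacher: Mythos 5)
Your argument is correct and is essentially identical to the paper's: the paper also derives the criterion by combining Corollary~\ref{H0yH2} (giving $\mathbb{H}^0(C_k^\bullet)=\mathbb{H}^2(C_k^\bullet)=0$, hence $\mathrm{h}^1(C_k^\bullet)=-\chi(C_k^\bullet)$) with Proposition~\ref{localminima} and the preceding Euler-characteristic proposition. The only cosmetic difference is your aside about the hypercohomology long exact sequence, which is not needed since the ``equality iff isomorphism'' statement is already supplied by the cited proposition.
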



Applying this criterion we obtain the following result.

\begin{theorem}\label{SO(1,2m+1)smoothminima}
The smooth minima of the Hitchin function in the moduli space of
polystable $\SO_0(1,n)$-Higgs bundles with $n>2$ have zero Higgs
field.
\end{theorem}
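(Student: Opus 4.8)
The plan is to argue by contradiction, so suppose that $(\mathcal{O},W,Q_W,\eta)$ is a smooth point of $\mathcal{M}$ — hence stable and simple — which is a local minimum of $f$ and has $\eta\neq 0$; I will show this forces $n=2$. Being a local minimum it is a critical point of $f$, hence by Proposition \ref{fixedpoints} a fixed point of the circle action, and since $\eta\neq 0$ Proposition \ref{hodgebundles} applies: $(\mathcal{O},W,Q_W,\eta)$ is a Hodge bundle, so $W=\bigoplus_{r=-s}^{s}W_r$ with $W_r\cong(W_{-r})^{*}$, with $Q_W(W_r,W_l)=0$ unless $r+l=0$, and with the Higgs field supported on the weight $-1$ piece, $\eta\colon W_{-1}\to\mathcal{O}\otimes K$. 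In particular $W_{-1}\neq 0$, and therefore $W_1\neq 0$ as well.

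First I would reduce the grading to three terms. Since the point is stable and $n\neq 2$, I can use the isotropic-subbundle form of stability (Proposition \ref{stability4}). The subbundle $W_{\geq 2}:=\bigoplus_{r\geq 2}W_r$ is isotropic (as $r+l\geq 4\neq 0$ there) and satisfies $\eta(W_{\geq 2})=0$, because $\eta$ annihilates every $W_r$ with $r\neq -1$; hence $\deg W_{\geq 2}<0$ unless $W_{\geq 2}=0$, and likewise $\deg W_{\leq -2}<0$ unless $W_{\leq -2}=0$. But $W_{\leq -2}\cong(W_{\geq 2})^{*}$, so these degrees are opposite, which forces $W_{\geq 2}=W_{\leq -2}=0$. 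Thus $W=W_{-1}\oplus W_0\oplus W_1$.

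Next I would feed this into the graded deformation complex of Section \ref{deformationsection}. Since $W$ carries only the weights $-1,0,1$, the target of $C_k^{\bullet}$, namely $\Hom(W,\mathcal{O})_{k+1}\otimes K=\Hom(W_{-(k+1)},\mathcal{O})\otimes K$, vanishes for every $k\geq 1$; so by Proposition \ref{criteriumofminima} the point is a local minimum precisely when $\mathfrak{so}(W)_k=0$ for all $k>0$. A direct computation of the weight-$k$ part of $\mathfrak{so}(W)$ gives $\mathfrak{so}(W)_k=0$ for $k\geq 3$ (weights differ by at most $2$), $\mathfrak{so}(W)_1\cong\Hom(W_{-1},W_0)$ (the components in $\Hom(W_{-1},W_0)$ and $\Hom(W_0,W_1)$ being exchanged by skew-symmetry), and $\mathfrak{so}(W)_2\cong\Lambda^2 W_{-1}^{*}$ (using $W_1\cong W_{-1}^{*}$, with the skew-symmetry relation on $\Hom(W_{-1},W_1)$ now internal). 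Hence being a minimum is equivalent to $W_0=0$ and $\rk W_{-1}\leq 1$; since $\eta\neq 0$ forces $\rk W_{-1}\geq 1$, this gives $W=W_{-1}\oplus W_1$ of rank $2$, i.e. $n=2$, contradicting $n>2$. Therefore every smooth minimum has $\eta=0$.

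The step I expect to be the main obstacle is the reduction $W=W_{-1}\oplus W_0\oplus W_1$: making the stability argument airtight, in particular the use of the strict inequality in Proposition \ref{stability4} (which relies on the smooth point being stable), and keeping the weight conventions for $\mathfrak{so}(W)$ and $\Hom(W,\mathcal{O})$ consistent so that the target of $C_k^{\bullet}$ is identified correctly; once the three-step structure is in place, the identifications of $\mathfrak{so}(W)_1$ and $\mathfrak{so}(W)_2$ are routine. (When $n$ is odd one can shortcut the last part: then $\rk W_0=n-2\rk W_{-1}$ is odd, so $W_0\neq 0$ and already $C_1^{\bullet}$ fails to be an isomorphism.)
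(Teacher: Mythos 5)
Your proof is correct, and it reaches the conclusion by a genuinely different route for the key structural step. The paper first invokes Theorem \ref{polystable} together with Propositions \ref{stable} and \ref{strictlypolystable} to decompose the stable point into stable $\SO_0(1,n_i)$- and $\SO(n_i)$-summands, uses additivity of the Hitchin function to reduce to each summand, and only then applies the criterion of Proposition \ref{criteriumofminima} to the subcomplexes $C_2^\bullet:\Lambda^2 W^i_1\rightarrow 0$ and $C_1^\bullet:\Hom(W^i_{-1},E)\rightarrow 0$. You instead truncate the Hodge grading directly: the partial sums $W_{\geq 2}$ and $W_{\leq -2}$ are isotropic, annihilated by $\eta$, and mutually dual, so the strict inequality in Proposition \ref{stability4} forces both to vanish; this yields $W=W_{-1}\oplus W_0\oplus W_1$ without appealing to the decomposition theorem or to additivity, after which your identifications $\mathfrak{so}(W)_1\cong\Hom(W_{-1},W_0)$ and $\mathfrak{so}(W)_2\cong\Lambda^2 W_{-1}^*$, together with the vanishing of the targets $\Hom(W_{-(k+1)},\mathcal{O})\otimes K$ for $k\geq 1$, reproduce exactly the two obstructions the paper finds ($W_0=0$ and $\rk W_{-1}=1$, hence $n=2$). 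The weight bookkeeping you flag as the main risk is in fact handled correctly: $\Hom(W_k,\mathcal{O})$ sits in weight $-k$ and the differential $g\mapsto\eta g$ raises weight by $1$, so the target of $C_k^\bullet$ is indeed zero for all $k\geq 1$ once only the weights $-1,0,1$ survive. Your route is more self-contained, needing only Propositions \ref{stability4}, \ref{hodgebundles} and \ref{criteriumofminima}; the paper's route reuses the structure theory it has already built, which is what it then leans on again for the non-smooth minima in Theorem \ref{SO(1,2m+1)minima}.
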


\begin{proof}
Let $(\mathcal{O},W,Q_W,\eta)$ be a smooth point of the moduli
space with $\eta\neq 0$ which is a minimum of the Hitchin
function. Since it is stable, we know from Theorem
\ref{polystable} and Proposition \ref{stable} and
\ref{strictlypolystable} that it decomposes as a sum of stable
$G_i$-Higgs bundles where $G_i=\SO_0(1,n_i)$ and $\SO(n_i)$ with
$n_i\neq 2$. Since $(\mathcal{O},W,Q_W,\eta)$ is a critical point,
from Proposition \ref{hodgebundles}, the $\SO_0(1,n_i)$-Higgs
bundle in the decomposition is of the form
$$W^i_{-1}\rightarrow\mathcal{O}\rightarrow W^i_1,$$ where
$n_i=2\rk(W^i)$ (and $0<\deg(W^i_{-1})\leq 2g-2$). Observe that,
since $\mathcal{O}$ and the other $\SO(n_i)$-Higgs bundles in the
decomposition are self-dual, then they have weight $0$.

Since $(\mathcal{O},W,Q_W,\eta)$ is a minimum of the Hitchin
function, using Proposition \ref{criteriumofminima}, the
subcomplex
$$C_{2}^\bullet(\mathcal{O},W,Q_W,\eta):\Lambda^2 W^i_1\rightarrow 0$$ has to be an isomorphism.
Then $\rk(W^i_1)=\rk(W^i_{-1})=1$.

Since the Hitchin function is additive with respect to the direct
sum and $(\mathcal{O},W,Q_W,\eta)$ is a minimum, each $G_i$-Higgs
bundle in the decomposition has to be a minimum on the
corresponding moduli space $\mathcal{M}(G_i)$ and a minimum as
$\SO_0(1,n_i)$-Higgs bundles. Using the criterion of Proposition
\ref{criteriumofminima}, we have that the $\SO_0(1,2)$-Higgs
bundle $(\mathcal{O},W^i_1\oplus W^i_{-1},\eta)$ is a minimum. The
summands corresponding to $\SO(n_i)$-Higgs bundles are minima,
because they have Higgs field equal to zero. Consider now the sum
of this Higgs bundle together with an $\SO(n_i)$-Higgs bundle
$(E,Q)$ in the decomposition of $(\mathcal{O},W,Q_W,\eta)$. (Since
$n>2$, there is at least one summand of this type). The subcomplex
$$C_{1}^\bullet(\mathcal{O},W^i_1\oplus W^i_{-1}\oplus E,\eta):\Hom(W^i_{-1},E)\rightarrow
0$$ is not an isomorphism and then $(\mathcal{O},W^i_1\oplus
W^i_{-1}\oplus E,\eta)$ is not a minimum. We get a contradiction
and we conclude that the Higgs field $\eta$ has to be equal to
zero.
\end{proof}

\section{Minima in the whole moduli space}

In the previous section we characterize the minima of the Hitchin
functional in the smooth locus of the moduli space of
$\SO_0(1,n)$-Higgs bundle. In this section we extend the
characterization to the whole moduli space for $n$ odd. This
allows us to solve the problem of counting the connected
components of $\mathcal{M}(\SO_0(1,n))$ with $n$ odd.

\begin{theorem}\label{SO(1,2m+1)minima}
All the minima of the Hitchin function in the moduli space of
polystable $\SO_0(1,n)$-Higgs bundles, with $n$ odd, have the
Higgs field equal to zero. 
\end{theorem}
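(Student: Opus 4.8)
The plan is to deduce the statement from the smooth case, Theorem \ref{SO(1,2m+1)smoothminima}, via the canonical decomposition of a polystable $\SO_0(1,n)$-Higgs bundle into smooth summands. So let $(\mathcal{O},W,Q_W,\eta)$ be a polystable $\SO_0(1,n)$-Higgs bundle, with $n$ odd, which is a local minimum of the Hitchin function $f$, and assume toward a contradiction that $\eta\neq 0$. First I would apply Proposition \ref{polystablessmooth} to write
$$(\mathcal{O},W,Q_W,\eta)=\bigoplus_i (E_i,\varphi_i),$$
with each $(E_i,\varphi_i)$ a smooth $G_i$-Higgs bundle, $G_i=\SO_0(1,n_i)$, $\SO(n_i)$ or $\U(n_i)$. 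The summands with $G_i=\SO(n_i)$ are principal $\SO(n_i,\mathbb{C})$-bundles, so they carry zero Higgs field, and in this decomposition the $\U(n_i)$-pieces are the non-zero-weight subquotients of the filtration, on which $\eta$ also vanishes (for $n$ odd the Higgs field, which lies in $\bigoplus_{\mu_i=0}\Hom(W_i/W_{i-1},\mathcal{O})\otimes K$, is supported on the single middle piece). Hence, by Theorem \ref{polystable} and Theorem \ref{stablenonsimple}, there is exactly one summand of type $\SO_0(1,n_1)$, say $(E_1,\varphi_1)=(\mathcal{O},W_1,Q_{W_1},\eta_1)$, it is the only one whose Higgs field can be non-zero, and therefore $\eta\neq 0$ if and only if $\eta_1\neq 0$.

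The second ingredient is that $(E_1,\varphi_1)$ is itself a local minimum of $f$ on $\mathcal{M}(\SO_0(1,n_1))$. Indeed, the harmonic metric solving Hitchin's equations on a direct sum is the direct sum of the harmonic metrics of the summands, since the equations are block diagonal; thus $f$ is additive for direct sums, and deforming $(E_1,\varphi_1)$ inside $\mathcal{M}(\SO_0(1,n_1))$ while keeping the remaining summands fixed produces deformations of $(\mathcal{O},W,Q_W,\eta)$ along which $f$ varies precisely by the change of $\|\varphi_1\|^2$, so a decrease of $f$ at $(E_1,\varphi_1)$ would contradict minimality of $(\mathcal{O},W,Q_W,\eta)$. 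By Proposition \ref{polystablessmooth} the point $(E_1,\varphi_1)$ is moreover smooth in $\mathcal{M}(\SO_0(1,n_1))$. Next I would pin down the parity of $n_1$: when $n$ is odd the filtrations occurring in the proof of Theorem \ref{polystable} have odd length, the only weight-zero piece is the middle term, and the non-zero-weight pieces pair up into $\U$-summands of even total rank; moreover each application of Theorem \ref{stablenonsimple} in the reduction to smooth summands removes an orthogonal part of even rank. Hence $n_1\equiv n\pmod 2$, so $n_1$ is odd and in particular $n_1\neq 2$.

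It then remains to treat $n_1\geq 3$ and $n_1=1$ separately. If $n_1>2$, then Theorem \ref{SO(1,2m+1)smoothminima} applies to the smooth local minimum $(E_1,\varphi_1)$ of $f$ on $\mathcal{M}(\SO_0(1,n_1))$ and forces $\eta_1=0$, a contradiction. If $n_1=1$, then $\SO_0(1,1)$ is abelian, the moduli space $\mathcal{M}(\SO_0(1,1))$ is the connected space $H^0(K)$ of pairs $(\mathcal{O},\eta_1)$, the harmonic metric is flat, and $f(\eta_1)=\|\eta_1\|^2$ has $\eta_1=0$ as its only critical point, hence as its only local minimum; so again $\eta_1\neq 0$ is impossible. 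In either case we reach a contradiction, proving that every minimum has $\eta=0$. I expect the delicate step to be the compatibility of the smooth decomposition of Proposition \ref{polystablessmooth} with the Hitchin function --- the additivity of $f$, and the resulting fact that a direct summand of a minimum is again a minimum --- precisely because $\mathcal{M}$ is singular at the points under consideration; the parity count ruling out $n_1=2$ is the other point that must not be overlooked, and it is exactly there that the hypothesis that $n$ is odd enters.
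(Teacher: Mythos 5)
Your proof is correct, and it rests on the same two pillars as the paper's: the decomposition of a polystable object into smooth (stable and simple) summands, with the Higgs field confined to the unique $\SO_0(1,n_1)$-summand, and the additivity of $f$ over direct sums, which forces each summand to be a local minimum in its own moduli space. Where you diverge is in how the contradiction is extracted. The paper treats the stable-non-simple and strictly polystable cases separately, and in each case it does \emph{not} apply Theorem \ref{SO(1,2m+1)smoothminima} to a single summand: its $\SO_0(1,\cdot)$-piece is effectively an $\SO_0(1,2)$-piece $W_{-1}\to\mathcal{O}\to W_1$ with $\rk W_{\pm 1}=1$, which \emph{is} a minimum in $\mathcal{M}(\SO_0(1,2))$, so the paper must locate an odd-rank orthogonal summand $(E,Q)$ elsewhere (this is where $n$ odd enters for the paper), assemble the auxiliary smooth $\SO_0(1,n_j+2)$-Higgs bundle, and exhibit the destabilizing cross-term $\Hom(W_{-1},E)$ in $C_1^\bullet$ explicitly. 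You instead prove the parity statement $n_1\equiv n\pmod 2$ (correct: the $\U$-pieces come in dual pairs of even total rank, and each application of Theorem \ref{stablenonsimple} splits off an orthogonal part $W_{-1}$ of even rank), so $n_1$ is odd, hence $n_1\neq 2$, and then quote Theorem \ref{SO(1,2m+1)smoothminima} as a black box for $n_1>2$; the destabilizing direction is the same cross-term, but it now lives \emph{inside} the single summand and is hidden in the quoted theorem. This buys you a uniform treatment of both singular cases and no re-examination of the deformation complex, at the cost of making the mechanism less visible; the paper's version makes explicit which weight-one deformation kills minimality. Two small remarks: your $n_1=1$ case is vacuous (a Hodge bundle of rank one has $W_{-1}=0$, hence $\eta_1=0$, and $\SO_0(1,1)$ is not semisimple, so it sits outside the paper's framework anyway), and the additivity of $f$ together with the claim that a summand of a minimum is a minimum is asserted rather than proved --- but the paper does exactly the same, so you are on equal footing there.
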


\begin{proof}
From Theorem \ref{SO(1,2m+1)smoothminima} we have that the smooth
minima of the Hitchin function in the moduli space of polystable
$\SO_0(1,n)$-Higgs bundles have zero Higgs field. In particular
this is true for $n$ odd.

$1.$ If $(\mathcal{O},W,Q_W,\eta)$ is a stable but non-simple
$\SO_0(1,n)$-Higgs bundle ($n$ odd) with $\eta\neq 0$ which is a
fixed point of the circle action, using Theorem
\ref{stablenonsimple} and Proposition \ref{hodgebundles}, we
obtain that it decomposes as a sum of a smooth minimum in
$\mathcal{M}(\SO_0(1,n_i))$ of the form
$$W^i_{-1}\rightarrow\mathcal{O}\rightarrow W^i_1,$$ together
with a sum of $\SO(n_i)$-Higgs bundles with $n_i\neq 2$ where at
least one has rank $n_i$ even. The first summand is necessary to
guarantee the condition $\eta\neq 0$ and the condition for the
rank $n_i$ to be even determines the non-simplicity of
$(\mathcal{O},W,Q_W,\eta)$.

As in the proof of Theorem \ref{SO(1,2m+1)smoothminima}, since the
Hitchin function $f$ is additive with respect to the direct sum,
if $(\mathcal{O},W,Q_W,\eta)$ is a minimum, each Higgs bundle in
its decomposition has to be a minimum on the corresponding moduli
space $\mathcal{M}(G_i)$ and a minimum as $\SO_0(1,n_i)$-Higgs
bundle.

Since $n\geq 3$, there is at least one $\SO(n_i)$-Higgs bundle in
the decomposition. If we consider this summand $(E,Q)$ together
with the one of the form $W_{-1}\rightarrow\mathcal{O}\rightarrow
W_1$, we obtain a smooth $\SO_0(1,n_i+2)$-Higgs bundle. Using the
same argument as in Theorem \ref{SO(1,2m+1)smoothminima} we deduce
that it is not a minimum (observe that $E\cong E^*$ and then it
has weight zero). This implies that $(V,Q_V,W,Q_W,\eta)$ is not a
minimum and we conclude.

$2.$ If $(\mathcal{O},W,Q_W,\eta)$ is a strictly polystable
$\SO_0(1,n)$-Higgs bundle ($n$ odd) with $\eta\neq 0$ which is a
fixed point of the circle action, it decomposes as a sum of a
smooth minimum in $\mathcal{M}(\SO_0(1,2))$ of the form
$$W_{-1}\rightarrow\mathcal{O}\rightarrow W_1,$$ together
with a sum of $\SO(n_i)$-Higgs bundles with and at least one
summand of one of the following types: an $\SO(2)$-Higgs bundle or
a $\U(n_i)$-Higgs bundle. The existence of this summand in the
decomposition is necessary to guarantee the strict polystability
of $(\mathcal{O},W,Q_W,\eta)$.

Since $n$ is odd, $n-2$ is also odd, and since
$$\U(n_i)\hookrightarrow\SO(2n_i)\hookrightarrow\SO_0(1,n-2),$$ with $2n_i$ even,
there is at least one $\SO(n_i)$-Higgs bundle $(E,Q)$ in the
decomposition (and $n_i$ is odd).

As in the stable but non-simple case, if we consider this summand
$(E,Q)$ together with the one of the form
$W^i_{-1}\rightarrow\mathcal{O}\rightarrow W^i_1$, we obtain a
smooth $\SO(1,n_i+2)$-Higgs bundle which is not a minimum and we
conclude.
\end{proof}

\begin{remark}
If $n$ is even, we can not guarantee the existence of an
$\SO(n_i)$-Higgs bundle in the decomposition in the second part of
the proof and then this result can not be generalized to the even
case.
\end{remark}

Using the characterization of the minima given by Theorem
\ref{SO(1,2m+1)minima} we solve the problem of counting the
connected components of the moduli space $\mathcal{M}(\SO_0(1,n))$
with $n$ odd.

\begin{theorem}\label{SO(1,2m+1)components}
The moduli space of $\SO_0(1,n)$-Higgs bundles when $n>1$ is odd
has $2$ connected components.
\end{theorem}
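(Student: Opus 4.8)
The plan is to read off the number of components directly from the description of the critical locus established above, combined with the known connectedness of the moduli space of principal $\SO(n,\mathbb{C})$-bundles of fixed topological type. Throughout let $n\geq 3$ be odd, so that $\pi_1(\SO(n,\mathbb{C}))\cong\mathbb{Z}/2$ and the topological invariant $c$ (the second Stiefel--Whitney class of $(W,Q_W)$) takes exactly the two values $0$ and $1$, giving the disjoint decomposition $\mathcal{M}(\SO_0(1,n))=\mathcal{M}_0(\SO_0(1,n))\sqcup\mathcal{M}_1(\SO_0(1,n))$ into open and closed pieces. It therefore suffices to prove that each $\mathcal{M}_c(\SO_0(1,n))$ is nonempty and connected.

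Fix $c$ and write $\mathcal{M}:=\mathcal{M}_c(\SO_0(1,n))$. The Hitchin function $f([A,\varphi])=\|\varphi\|^2$ on $\mathcal{M}$ is proper by Proposition \ref{fproper} and takes nonnegative values, with $f^{-1}(0)$ equal to the locus of Higgs bundles with vanishing Higgs field, i.e. the set of $(\mathcal{O},W,Q_W,0)$ in $\mathcal{M}$. Every such point is a global minimum of $f$, while conversely Theorem \ref{SO(1,2m+1)minima} asserts that every local minimum of $f$ has $\eta=0$. Hence the subspace $\mathcal{N}$ of local minima of $f$ on $\mathcal{M}$ is precisely $f^{-1}(0)$. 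By Definition \ref{stability3} and Proposition \ref{stability4}, when $\eta=0$ the polystability of $(\mathcal{O},W,Q_W,0)$ as an $\SO_0(1,n)$-Higgs bundle is equivalent to the polystability of $(W,Q_W)$ as a principal $\SO(n,\mathbb{C})$-bundle, so $\mathcal{N}$ is canonically identified with the moduli space $\mathcal{M}_c(\SO(n))$ of polystable principal $\SO(n,\mathbb{C})$-bundles with second Stiefel--Whitney class $c$.

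Next I would invoke the connectedness of $\mathcal{M}_c(\SO(n))$: the moduli space of polystable principal bundles of fixed topological type over a compact Riemann surface, with connected semisimple complex structure group, is connected. For $c=0$ one may argue by hand, since every such $\SO(n,\mathbb{C})$-bundle is the image of a polystable $\Spin(n,\mathbb{C})$-bundle under $\Spin(n,\mathbb{C})\to\SO(n,\mathbb{C})$, and the moduli space of polystable $\Spin(n,\mathbb{C})$-bundles is connected because $\Spin(n,\mathbb{C})$ is simply connected; the case $c=1$ has no such spin lift and one appeals to the general theorem (or a twisted version). Granting connectedness of $\mathcal{M}_c(\SO(n))$, the space $\mathcal{N}$ is connected, and the Morse-theoretic principle recalled above (applied with $\mathcal{M}'=\mathcal{M}$, closed in itself, and $\mathcal{N}'=\mathcal{N}$) yields that $\mathcal{M}=\mathcal{M}_c(\SO_0(1,n))$ is connected. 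Finally $\mathcal{M}_c(\SO_0(1,n))$ is nonempty for both $c$: the value $c=0$ is realized by the trivial orthogonal bundle, and $c=1$ by, e.g., a stable $\SO(3,\mathbb{C})$-bundle with nontrivial $w_2$ (equivalently a stable rank-two bundle of odd degree, modulo its determinant), extended by a trivial orthogonal complement when $n>3$. Putting the two pieces together, $\mathcal{M}(\SO_0(1,n))$ has exactly two connected components.

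The only genuinely non-formal ingredient is the connectedness of $\mathcal{M}_c(\SO(n))$; everything else is bookkeeping around Theorem \ref{SO(1,2m+1)minima} and properness of $f$. The $c=0$ case reduces cleanly to the simply connected group $\Spin(n,\mathbb{C})$, but the $c=1$ case needs either a twisted lift or a precise citation of the theorem on connectedness of moduli of principal bundles of fixed topological type, and this is the step I would expect to require the most care.
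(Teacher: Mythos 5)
Your proposal is correct and follows essentially the same route as the paper: decompose $\mathcal{M}(\SO_0(1,n))$ by the invariant $c=w_2\in\mathbb{Z}/2$, use Theorem \ref{SO(1,2m+1)minima} together with properness of $f$ to identify the locus of local minima in each $\mathcal{M}_c$ with the connected moduli space of polystable $\SO(n,\mathbb{C})$-bundles of fixed $w_2$, and conclude each piece is connected. The paper's own proof is a two-sentence version of this; you have usefully made explicit the two steps it leaves implicit, namely the identification $\mathcal{N}=f^{-1}(0)\cong\mathcal{M}_c(\SO(n))$ and the appeal to connectedness and nonemptiness of that space.
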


\begin{proof}
The topological invariant associated to an $\SO_0(1,n)$-Higgs
bundle $(\mathcal{O},W,Q_W,\eta)$ with $n\geq 3$ is the
Stiefel-Whitney class
$w_2\in\pi_1(\SO(n,\mathbb{C}))\cong\mathbb{Z}_2=\{0,1\}$. From
Theorem \ref{SO(1,2m+1)minima} we have that, when $n$ is odd,
there are no minima of the Hitchin function with non-zero Higgs
field, and then $\mathcal{M}(\SO_0(1,n))$ ($n$ odd) is the
disjoint union of the moduli spaces $\mathcal{M}_0(\SO_0(1,n))$
and $\mathcal{M}_1(\SO_0(1,n))$, which are connected.
\end{proof}

\end{document}